%
%
%
%

\documentclass{Class-ICCM}
\usepackage{times}
\usepackage{amsmath,amssymb,amsthm,mathrsfs,epsf,a4,color,graphicx,eucal}

\theoremstyle{plain}

\theoremstyle{definition}
\newtheorem{definition}{Definition}
\newtheorem{proposition}{Proposition}
\newtheorem{lemma}{Lemma}
\newtheorem{remark}{Remark}


\usepackage{psfrag}
\usepackage{epsfig}


\newcounter{myfigure}
\newenvironment{my-picture}[3]{\refstepcounter{myfigure}\label{#3}\setlength{\unitlength}{\textwidth}\begin{picture}(#1,#2)}{\end{picture}}


\newcommand{\ITEM}[2]{\parbox[t]{.05\textwidth}{#1}\hfill\parbox[t]{.95\textwidth}{#2}\vspace*{.8mm}}
\newcommand{\DDD}[3]{\begin{array}[t]{c}#1\vspace*{-.5em}\\_{#2}\vspace*{-.4em}\\_{#3}\end{array}}

\newcommand\R{\mathbb R}
\newcommand\N{\mathbb N}

\newcommand\DT[1]{\mathchoice
                 {{\buildrel{\hspace*{.1em}\text{\LARGE.}}\over{#1}}}
                 {{\buildrel{\hspace*{.1em}\text{\Large.}}\over{#1}}}
                 {{\buildrel{\hspace*{.1em}\text{\large.}}\over{#1}}}
                 {{\buildrel{\hspace*{.1em}\text{\large.}}\over{#1}}}}

\newcommand\bbC{\mathbb C}
\newcommand\bbK{\mathbb K}
\newcommand\calE{\mathscr E}
\newcommand\calR{\mathscr R}
\newcommand\GC{\Gamma_{\mbox{\tiny\rm C}}}
\newcommand\SC{\Sigma_{\mbox{\tiny\rm C}}}

\newcommand{\SDir}{\Sigma_{\mbox{\tiny\rm D}}}
\newcommand{\GDir}{\Gamma_{\mbox{\tiny\rm D}}}
\newcommand{\GNeu}{\Gamma_{\mbox{\tiny\rm N}}}

\newcommand{\Dir}{{\mbox{\tiny\rm D}}}

\newcommand{\tto}{\rightrightarrows}

\newcommand{\weak}{\rightharpoonup}
\newcommand{\weaks}{\stackrel{*}{\rightharpoonup}}

\renewcommand{\d}{\mathrm{d}}

\begin{document}
\begin{frontmatter}
\author{T.~Roub\'\i\v cek\corref{cor1}\fnref{label1}}
\ead{roubicek@karlin.mff.cuni.cz}
 \address{Mathematical Institute, Charles University, Sokolovsk\'a 83, CZ-186~75~Praha~8, Czech Republic}
 \address{Institute of Thermomechanics of the ASCR, Dolej\v skova 5, CZ-182 00 Praha 8, Czech Republic}

 \author{C.G.~Panagiotopoulos\corref{cor2}\fnref{label2}}
 \ead{cpanagiotopoulos@us.es}
 
 \author{V.~Manti\v c\corref{cor3}\fnref{label2}}
 \ead{mantic@etsi.us.es}
 \address{Group of Elasticity and Strength of Materials, Department of Continuum Mechanics, School of Engineering, University of Seville, Camino de los Descubrimientos s/n \\ES-410 92 Sevilla, Spain}

\fntext[label1]{Support from Universidad de Sevilla and the grants 201/09/0917 and 
201/10/0357 (GA \v CR), together with the institutional support RVO:\,61388998
(\v CR) is acknowledged.}
\fntext[label2]{Support from the Junta de Andaluc\'{\i}a and Fondo Social Europeo 
(Proyecto de Excelencia TEP-4051) is acknowledged. Also the support 
from Ministerio de Ciencia e Innovaci\'on (Proyecto MAT2009-14022) is 
acknowledged.}

\title{Quasistatic adhesive contact of visco-elastic bodies
and its numerical treatment for very small viscosity}

\begin{abstract}
An adhesive unilateral contact of elastic bodies with a small viscosity 
in the linear Kelvin-Voigt rheology at small strains
is scrutinized. The flow-rule for debonding the adhesive is
considered rate-independent and unidirectional,
and inertia is neglected. The asymptotics for the viscosity approaching 
zero towards purely elastic material
involves a certain defect-like measure recording in some
sense natural additional energy dissipated in the bulk
due to (vanishing) viscosity,
which is demonstrated on particular 2-dimensional computational 
simulations based on a semi-implicit time discretisation and 
a spacial discretisation implemented by boundary-element method.
\end{abstract}

\begin{keyword}
Adhesive contact \sep  
debonding \sep
delamination \sep
Kelvin-Voigt \sep
materials \sep
vanishing viscosity limit \sep
numerical approximation \sep
computational simulations
\end{keyword}

\end{frontmatter}

\section{Introduction, quasistatic delamination problem}\label{sec-intro}

Quasistatic inelastic processes on surfaces of (or interfaces between) solid 
elastic bodies like  fracture or {\it delamination} (or {\it debonding}) of 
adhesive contacts have received intensive engineering and mathematical scrutiny 
during past decades. Often, the time scale 
of such processes is much faster than the external loading time scale, 
and such processes are then modelled as rate independent, 
which may bring theoretical and computational advantages. Yet,
the above mentioned inelastic phenomena typically lead to sudden jumps
during evolution, which is related with the attribute of nonconvexity of 
the governing stored energy (cf.\ here the non-convex term 
$\int_{\GC}\frac12z\bbK u{\cdot}u\,\d S$ in \eqref{engr-ineq-E} below), 
and then it is not entirely clear which concept of solutions suits well 
for the desired specific application. 

The ``physically'' safe way to coup with this problem is to reduce 
rate-independency 
on only such variables with respect to which the stored energy is convex,
the resting ones being subjected to certain viscosity (or possibly 
also inertia). Here we neglect inertia from the beginning, which is 
addressed as a quasistatic problem; cf.\ \cite[Sect.\,5]{Roub??ACVE} 
for the dynamical case. Moreover, such viscosities can be (and in most 
materials also are) very small, and in engineering literature are 
almost always completely neglected. However, 
although arbitrarily small, such viscosities are critically important
to keep energetics valid. It therefore makes a sense to investigate 
the asymptotics towards purely elastic materials when these viscosities 
vanish. In the limit, we thus get some solutions of the underlying 
rate-independent system which, however, might (and, in specific 
applications, intentionally should) be different from solutions 
arising when viscosity are directly zero and global-energy-minimization 
principle is in play, cf.\ also Remark~\ref{rem-stress-vs-energy} below.

In this article, we will confine ourselves to visco-elastic bodies 
{\it at small strains} and we consider the viscosity in the 
{\it Kelvin-Voigt rheology}, which is the simplest rheology 
which makes the desired effect of natural prevention of the too-early 
delamination, cf.\ \cite{Roub??ACVE}. It should also be emphasized that our 
viscosity is in the bulk while the inelastic delamination itself is considered 
fully rate-independent, in contrast to a usual vanishing-viscosity approach as 
e.g.\ in 
\cite{DDMM08VVAQ,EfeMie06RILS,Fias??VVAQ,KnMiZa08ILMC,LazToa??MCPB,MiRoSa09?BVSV,ToaZan09AVAQ}.
A certain bulk viscosity but acting on displacement itself rather than
on the strain was considered in \cite{Cagn08VVAF}.

For notational simplicity, we consider a single visco-elastic body occupying 
a bounded Lipschitz domain $\Omega\subset\R^d$ and the adhesive contact on 
a part $\GC$ of the boundary $\partial\Omega$, so that 
we consider $\partial\Omega=\GC\cup\GDir\cup\GNeu\cup N$ with
disjoint relatively open $\GC$, $\GDir$, and $\GNeu$ subsets of 
$\partial\Omega$ and with $N$ having a zero $(d{-}1)$-dimensional measure. 
All results are, however, valid equally for delamination on boundaries inside 
$\Omega$, i.e.\ an adhesive contact between several visco-elastic bodies.
For readers' convenience, let us summarize the notation used below:\medskip

\medskip

\hspace*{2.4em}\fbox{
\begin{minipage}[t]{0.48\linewidth}\small
$d$ dimension of the problem ($d=2,3$),

$u$ displacement (defined on $\Omega$),

$z$ delamination parameter (defined on $\GC$),

$e(u)=\frac12(\nabla u)^\top\!+\frac12\nabla u\ $ small-strain tensor,

$\bbC$ tensor of elastic moduli of the 4$^{\rm th}$-order,

$\chi$ a ``Kelvin-Voigt'' relaxation time,

$\chi\bbC$ viscous-moduli tensor,

$\epsilon=e(\chi\DT{u}+u)$

$\bbK$ the matrix of elastic moduli of the adhesive,

\end{minipage} 
\begin{minipage}[t]{0.35\linewidth}\small

$E$ Young modulus,

$\nu$ Poisson ratio,

$\alpha$ fracture toughness,

$\mathfrak d$ driving energy for delamination,

$\mathfrak t$ traction stress vector (acting on $\GNeu\cup\GC$),  

$\mathfrak t_{\rm n}$, $\mathfrak t_{\rm t}$ normal or tangential 
component of $\mathfrak t$,  

$f$ bulk load (acting on $\Omega$),

$g$ surface load (acting on $\GNeu$),

$w_\Dir$ surface displacement loading  (on $\GDir$).

\end{minipage}\medskip
}

\vspace*{-.4em}

\begin{center}
{\small\sl Table\,1.\ }
{\small Summary of the basic notation used thorough the paper. 
}
\end{center}

\medskip

\noindent
We consider the standard model of a {\it unilateral frictionless Signorini 
contact}. The quasistatic boundary-value problem for the displacement $u$ on 
$\Omega$ and the so-called delamination parameter $z$ on $\GC$ valued in 
$[0,1]$, representing Fr\'emond's concept \cite{Frem85DAS} of delamination, 
considered in this paper is: 
\begin{subequations}\label{plain}\begin{align}
\label{plain1}
&\mathrm{div}\,\bbC\epsilon+f=0\qquad\text{ with }\qquad 
\epsilon=\epsilon(u,\DT{u})=\chi e(\DT{u})+e(u)
&&\text{on }\Omega,
\\\label{plain2}  
&u=w_\Dir&&\text{on }\GDir,
\\\label{plain3}  
&\mathfrak{t}(\epsilon)=g&&\text{on }\GNeu,
\\\label{class-form-d}
   &\left.\begin{array}{ll}
&\hspace{-1.7em}
\mathfrak{t}_{\rm t}(\epsilon)+
z\big(\bbK u{-}\big((\bbK u){\cdot}\vec{n}\big)\vec{n}\big)=0,
\\[.3em]
   &\hspace{-1.7em}u{\cdot}\vec{n}\ge0,
\quad\
\mathfrak{t}_{\rm n}(\epsilon){+}z(\bbK u){\cdot}\vec{n}\ge0,
\quad\ \big(\mathfrak{t}_{\rm n}(\epsilon){+}z(\bbK u){\cdot}\vec{n}\big)
\big(u{\cdot}\vec{n}\big)=0,
\\[.3em]
   &\hspace{-1.7em}\DT{z}\le0,\qquad\qquad\mathfrak d\le \alpha,
\qquad\qquad\qquad\quad\ \DT{z}(\mathfrak d{-}\alpha)=0,
 \\[.3em] &\hspace{-1.7em}
\mathfrak d\in\frac12\bbK u{\cdot}u+N_{[0,1]}(z)
   \end{array}\ \ \right\}
   &&\text{on }\GC,&&
\end{align}
where we use the usual ``dot-notation'' for the time derivative, i.e.\ 
$\displaystyle{(\cdot)\!\DT{^{}}}$=$\frac\partial{\partial t}$, where
the set-valued mapping $N_{[0,1]}:\R\tto\R$ assigns $z\in\R$ the normal cone 
$N_{[0,1]}(z)$ to the
convex set $[0,1]\subset\R$ at $z\in\R$, and where the traction stress
and its normal and tangential components are defined on $\GC\cup\GNeu$ 
respectively by the formulas 
\begin{align}\label{def-of-T}
\mathfrak{t}(\epsilon)=\big(\bbC\epsilon\big)\big|_{\Gamma}\vec{n},
\qquad\mathfrak{t}_{\rm n}(\epsilon)=\vec{n}\big(\bbC\epsilon\big)\big|_{\Gamma}\vec{n},
\qquad\mathfrak{t}_{\rm t}(\epsilon)=\big(\bbC\epsilon\big)\big|_{\Gamma}\vec{n}
-\mathfrak{t}_{\rm n}(\epsilon),
\end{align}
where $\vec{n}=\vec{n}(x)$ is the unit outward normal to 
$\Gamma:=\partial\Omega$. We further consider the initial-value problem
for (\ref{plain}a-e) by prescribing the initial conditions
\begin{align}
\label{IC}
u(0)=u_0\qquad\text{and}\qquad z(0)=z_0.
\end{align}\end{subequations}
Of course, the loading $f$, $g$, and $w_\Dir$ in (\ref{plain}a-c)
depend on time $t$. The parameter $\alpha>0$ in \eqref{class-form-d} is a given
phenomenological number quantity (possibly as a function of $x\in\GC$)
with a physical dimension J/m$^2$ with the meaning of a specific 
energy needed (and thus deposited in the newly created surface) 
to delaminate 1m$^2$ of the surface under adhesion
or, equally, the energy dissipated by this delamination process;
in fact, \eqref{def-of-T-R} below reflects the latter interpretation.
In engineering, $\alpha$ is also called {\it fracture toughness} 
(or fracture energy).
 
As already mentioned, in conventional materials, the viscosity and 
thus {\it relaxation time} $\chi>0$ is mostly very 
small in comparison with external force loading time-scale, and it is worth 
studying the asymptotics for $\chi\to0$.
Formally, the inviscid limit problem arising for $\chi\to0$ is 
a quasistatic problem for purely elastic material 
which consists in replacing (\ref{plain1}) by
\begin{align}
\label{plain-hyp1}
&\mathrm{div}\,\bbC e(u)+f=0
\qquad\qquad\text{on }\Omega,
\end{align}
and  in replacing $\mathfrak{t}(\epsilon)$ by $\mathfrak{t}(e(u))$
in \eqref{plain3} and similarly
$\mathfrak{t}_{\rm n}(\epsilon)$ and $\mathfrak{t}_{\rm t}(\epsilon)$
by $\mathfrak{t}_{\rm n}(e(u))$ and $\mathfrak{t}_{\rm t}(e(u))$
in \eqref{class-form-d} 
with $\mathfrak{t}(\cdot)$, $\mathfrak{t}_{\rm n}(\cdot)$,
and $\mathfrak{t}_{\rm t}(\cdot)$ again from \eqref{def-of-T}. 
This limit {\it rate-independent problem} itself, however, does not
record any trace of energy dissipated by viscosity in the bulk
during rupture of the delaminating surface, but there are
explicit examples, cf.\ \cite{Roub??ACVE}, showing that 
this energy is not negligible no matter how the viscosity coefficient $\chi>0$ 
is small, which leads to a notion of {\it Kelvin-Voigt approximable solution}
to this limit rate-independent problem involving a certain, so-called
{\it defect measure} recording the ``memory'' of this dissipated energy
which somehow remains even if viscosity coefficient $\chi$ vanishes
(i.e.\ is passed to 0).

The plan of the paper is as follows: First, in Section~\ref{sec-invisc}, we 
formulate the above initial-boundary-value problem \eqref{plain} weakly
and briefly present the main results about a-priori estimates and convergence
for $\chi\to0$ to the inviscid quasistatic rate-independent problem,
leading to the above mentioned approximable solutions and defect measures,
mainly taken from \cite{Roub??ACVE}. In Sect.~\ref{sec-disc}, we perform time 
discretisation by a semi-implicit scheme and present some convergence results 
again from \cite{Roub??ACVE}, and prove that the residuum in the discrete 
energy balance converges to zero if the time step goes to 0. Merging 
Sections~\ref{sec-invisc} and \ref{sec-disc}, this energy-residuum convergence
serves as an important ingredient for controlling convergence of the 
discretisation with dependence on convergence of viscosity. This 
is eventually used in Section~\ref{sec-num} where, making still
a spacial discretisation by boundary-element method (BEM),
we perform computational experiments both with a one-dimensional
example from \cite{Roub??ACVE} with a known solution to tune
parameters of the algorithm and eventually with a nontrivial two-dimensional example.
In this last example, we (to our best knowledge historically for the first time) 
present numerical study of a nontrivial, spatially non-homogeneous defect 
measure.

\section{Inviscid problem as a vanishing-viscosity limit}\label{sec-invisc}

The weak formulation of the initial-boundary value problem 
\eqref{plain} is a bit delicate 
due to the doubly-nonlinear structure of the flow rule for $z$ on $\GC$
without any compactness (i.e.\ without any gradient theory for $z$)
and with both involved nonlinearities unbounded due to the constraints 
$\DT z\le0$ and $z\ge0$ (while the third constraint $z\le1$ is 
essentially redundant if the initial condition satisfies it).
This would make serious difficulties in proving the existence of conventional 
weak solutions.
Benefiting from rate-independency of the evolution rule for $z$, we can
cast a suitable definition by combining the conventional weak solution
concept for $u$ and the so-called energetic-solution concept 
\cite{Miel05ERIS,MieThe04RIHM} of $z$ as in \cite{Roub09RIPV}. 

Considering a fixed time horizon $T>0$, we use the shorthand notation 
$I=(0,T)$, $\bar I=[0,T]$, $Q=I\times\Omega$, $\bar Q=\bar I\times\bar\Omega$ 
with $\bar\Omega$ the closure of $\Omega$,  $\SDir=I\times\GDir$, and $\SC=I\times\GC$. We will 
assume, without substantial restriction of generality of geometry of 
the problem, that  
\begin{align}\label{ass-geom}
\text{dist}(\GC,\GDir)>0,\qquad \text{meas}_{d-1}(\GDir)>0,
\qquad \text{meas}_{d-1}(\GC)>0.
\end{align}

We first make a transformation of the problem to get time constant Dirichlet 
condition. To this goal, we 
first consider a suitable prolongation $u_\Dir$ of $w_\Dir$ defined
on $Q$, i.e.\ $u_\Dir|_{\SDir}=w_\Dir$. Then we shift $u$ to $u+u_\Dir$,
and rewrite \eqref{plain} for such a shifted $u$. 
Thanks to the first condition in \eqref{ass-geom}, we can assume that 
$u_\Dir|_{\SC}=0$ so that \eqref{class-form-d}
remains unchanged under this shift. The equations (\ref{plain}a-c) transform
in such a way that the original loading $f$, $g$, and $w_\Dir$ as well as the 
original initial data $u_0$ are respectively modified as follows:
\begin{subequations}\label{rhs-new}\begin{align}\label{rhs-new-f}
&f\ \ \ \ \ \text{ replaced by }\ \ \ \ \ f+
\mathrm{div}\,\bbC\epsilon_\Dir
\ \ \ \ \ \text{ with }\ \ \ \epsilon_\Dir=e(\chi\DT u_\Dir{+}u_\Dir),
\\\label{rhs-new-g}&
g\ \ \ \ \ \text{ replaced by }\ \ \ \ \ 
g+\big(\bbC\epsilon_\Dir)|_{\GNeu}\vec{n},
\\\label{rhs-new-wD}
&w_\Dir\ \ \ \text{ replaced by }\ \ \ \ \,0,
\\\label{rhs-new-u0}
&u_0\ \ \ \;\text{ replaced by }\ \ \ \ \,u_0-u_\Dir(0).
\end{align}\end{subequations}

We will use the standard notation $W^{1,p}(\Omega)$ for the Sobolev space 
of functions having the gradient in $L^p(\Omega;\R^d)$. If valued in 
$\R^n$ with $n\ge2$, we will write $W^{1,p}(\Omega;\R^n)$, and furthermore we 
use the shorthand notation $H^1(\Omega;\R^n)=W^{1,2}(\Omega;\R^n)$.
We also use the notation of ``$\,\cdot\,$'' and ``$\,:\,$''
for a scalar product of vectors and 2nd-order tensors, respectively. Later, 
$\mathrm{Meas}(\bar Q)\cong
C(\bar Q)^*$ will denote the space of measures on the compact set $\bar Q$.
For a Banach space $X$, $L^p(I;X)$ will denote the Bochner space of 
$X$-valued Bochner measurable functions $u:I\to X$ with its norm 
$\|u(\cdot)\|$ in $L^p(I)$, here $\|\cdot\|$ stands for 
the norm in $X$. Further,
$BV(\bar I;X)$ will denote the space of mappings $u:\bar I\to X$ 
with a bounded variations, i.e.\ $
\sup_{0\le t_0<t_1<...<t_{n-1}<t_n\le T}\sum_{i=1}^n\|u(t_i){-}u(t_{i-1})\|<\infty$ 
where the supremum
is taken over all finite partitions of the interval $[0,T]$. Also, we will use
$H^1(I;X)$ for the Sobolev space of $X$-valued functions with distributional
derivatives in $L^2(I;X)$.
To accommodate the transformation \eqref{rhs-new} into the weak formulation,
we introduce the functional $\mathfrak f(t)\in H^1(\Omega;\R^d)^*$ by
\begin{align}\label{rhs-of-f}
&\big\langle \mathfrak f(t),v\big\rangle
:=\int_\Omega\!f(t){\cdot}v-\bbC e\big(\chi\DT{u}_\Dir(t){+}u_\Dir(t)\big){:}e(v)
+\int_{\GNeu}\!\!g(t){\cdot}v\,\d S.
\end{align}

\begin{definition}\label{def-ES} 
The couple $(u_\chi,z_\chi)$ with $u_\chi\!\in\!H^1(I;H^1(\Omega;\R^d))$ and 
$z_\chi\!\in\!BV(\bar I;L^1(\GC))\cap L^\infty(\SC)$ is called 
an energetic solution to the initial-boundary-value problem \eqref{plain}
under the transformation \eqref{rhs-new} if\\
\ITEM{(i)}{the momentum equilibrium (together with Signoring boundary conditions) in the weak form}\vspace*{-1.5em}
\begin{subequations}\label{weak}\begin{align}
&\hspace*{-2em}\int_Q\bbC e(\chi\DT{u}_\chi{+}u_\chi){:}e(v{-}u_\chi)\,\d x\d t
\label{momentum-equilibrium-visc}
+\int_{\SC}\!\!z_\chi\bbK u_\chi{\cdot}(v{-}u_\chi)\,\d S\d t
\ge\int_0^T\big\langle \mathfrak f(t),v{-}u_\chi\big\rangle\,\d t
\end{align}
\ITEM{}{with $\mathfrak f$ defined 
in \eqref{rhs-of-f} holds for any $v\in H^1(I;H^1(\Omega;\R^d))$ with 
$v|_{\SC}{\cdot}\vec{n}\ge0$ and $v|_{\SDir}=0$,}
\ITEM{(ii)}{the so-called semi-stability of the delamination holds for any 
$t\in[0,T]$:}
\begin{align}\label{semi-stab-visc}
\hspace*{-2em}\bbK u_\chi(t,x){\cdot}u_\chi(t,x)\le2\alpha(x)
\ \ \ \ \text{ or }\ \ \ \ z_\chi(t,x)=0\ \ \ \ \text{ for a.a.\ $x\in\GC$,}
\end{align}
\ITEM{(iii)}{and the energy equality}
\begin{align}
&\hspace*{-2em}\calE(t,u_\chi(t),z_\chi(t))
+\int_0^t\!\int_{\Omega}\!\chi\bbC e(\DT u_\chi){:}e(\DT u_\chi)\,\d x\d t
\label{engr-ineq}
+\int_{\GC}\!\!\alpha\big(z_0{-}z_\chi(t)\big)\,\d S=\calE(0,u_0,z_0)
+\int_0^t\!\big\langle\DT{\mathfrak f},u_\chi\big\rangle\,\d t
\end{align}
\ITEM{}{holds for any $t\in[0,T]$ with $\mathfrak f$ defined again 
by \eqref{rhs-of-f}, and with}
\begin{align}
\label{engr-ineq-E}
&\hspace*{-2em}\calE(t,u,z):=\begin{cases}
\displaystyle{\int_{\Omega}\!\frac12\bbC e(u){:}e(u)\d x
-\big\langle\mathfrak f(t),u\big\rangle
+\int_{\GC}\!\frac12z\bbK u{\cdot}u\d S}
&\text{if }\ u{\cdot}\vec{n}\ge0,\ \ 0\le z\le1\ \text{ on }\GC,
\\[-.5em]&\text{and if }\ u=0\ \text{ on }\GDir,
\\+\infty&\text{else,}\end{cases}\end{align}
\end{subequations}
\ITEM{(iv)}{the initial conditions \eqref{IC} understood transformed as 
{\rm(\ref{rhs-new-u0})} hold.}

This definition is indeed well selective in the sense that any 
smooth energetic solution solves also \eqref{plain} in the classical
sense. Due to \eqref{engr-ineq}, $\calE(t,u_\chi(t),z_\chi(t))<\infty$
so that it holds $u_\chi|_{\SC}{\cdot}\vec{n}\ge0$, $u_\chi|_{\SDir}=0$, and 
$0\le z_\chi\le1$ if the initial conditions satisfies these constraints so that 
$\calE(0,u_0,z_0)<\infty$. Note also that \eqref{plain} has an abstract 
structure of the initial-value problem for the triply nonlinear system of two 
evolution inclusions:
\begin{subequations}\label{abstract}\begin{align}
\big[\calR_\chi\big]_{\DT u}'\DT u+\partial_u\calE(t,u,z)\ni0\,,&&u(0)=u_0,&&&&&&&&
\\
\partial_{\DT z}\calR_\chi(\DT z)+\,\partial_z\calE(t,u,z)\ni0\,,&&z(0)=z_0,&&&&&&&&
\end{align}\end{subequations}
with $[\cdot]'$  denoting the (partial) G\^ateaux differentials and $\partial$ 
denoting the partial subdifferentials in the sense of convex analysis, with
$\calE$ from \eqref{engr-ineq-E}, and with the $\chi$-dependent 
(pseudo)potential of dissipative forces $\calR_\chi$ given by
\begin{align}\label{def-of-T-R}
\calR_\chi(\DT u,\DT z)=\begin{cases}
\displaystyle{\int_{\Omega}\frac\chi2\bbC e(\DT u){:}e(\DT u)
\,\d x+\!
\int_{\GC}\!\!\!\alpha|\DT z|\,\d S}\!\!&\text{if }\DT z\le0\text{ a.e. on }\GC,
\\+\infty&\text{otherwise.}\end{cases}
\end{align}
Also note that \eqref{semi-stab-visc} is equivalent to the integrated
form of the abstract semistability 
$\calE(t,u_\chi(t),z_\chi(t))\le\calE(t,u_\chi(t),\tilde z)+
\calR_0(\tilde z{-}z_\chi(t))$ holding for any $\tilde z\ge0$, 
where we wrote briefly 
$\calR_0(\DT u,\DT z)=\calR_\chi(0,\DT z)=:\calR_0(\DT z)$. This means here:
\begin{align}
\forall \tilde z\!\in\! L^\infty(\GC),\ \ 0\!\le\!\tilde z\!\le\!z_\chi(t):\quad
&\int_{\GC}(z_\chi(t){-}\tilde z)\big(\bbK u_\chi(t){\cdot}u_\chi(t)
-2\alpha\big)\,\d S\le0.
\label{semi-stabil}
\end{align}

We will generally assume the following data qualification:
\begin{subequations}\label{ass}\begin{align}\label{ass0}
&\bbC>0\ \text{(=\,positive definiteness)},
\\&\label{ass1}
\mathfrak f\in W^{1,1}(I;H^1(\Omega;\R^d)^*),
\ \ \ u_0\in H^1(\Omega;\R^d),
\ \ \ z_0\in L^\infty(\GC),
\\&\label{ass2}
 u_0|_{\GC}{\cdot}\vec{n}\ge0,\qquad 
0\le z_0\le1\ \ \text{ a.e.\ on }\GC,\ \ \text{ and }\ \ 
\\&\label{ass3}
\bbK u_0(x){\cdot}u_0(x)\le2\alpha
\ \ \text{ or }\ \ z_0(x)=0\ \ \text{ for a.a.\ }x\in\GC.
\end{align}\end{subequations}
Note that the qualification of $\mathfrak f$ in \eqref{ass1} represents, in 
fact, assumptions on $f$, $g$,
and $w_\Dir$ in the original problem \eqref{plain}, and that (\ref{ass}c,d) 
means semi-stability of the initial condition $(u_0,z_0)$.
Under \eqref{ass}, existence of the solutions due to Definition~\ref{def-ES}
can, in fact, be proved by limiting the discrete solutions 
\eqref{semi-impl}, cf.~Lemma~\ref{lem-disc} and details in 
\cite{Roub09RIPV,Roub??ACVE}.
\end{definition}
\begin{proposition}[Vanishing viscosity limit, \cite{Roub??ACVE}]\label{ch6:prop-slow-load-I}
Let \eqref{ass-geom} and \eqref{ass} hold, and let $\chi>0$.
Then:\\
\ITEM{(i)}{Any solution $(u_\chi,z_\chi)$
according to Definition~\ref{def-ES} satisfies the 
a-priori estimates:}\vspace*{-1.5em}
\begin{subequations}\label{ch6:delam-slow-apriori}
\begin{align}
\label{ch6:delam-slow-apriori-2}
&\big\|\DT u_\chi\big\|_{L^2(I;H^1(\Omega;\R^d))}
\le C/\sqrt\chi,
\\\label{ch6:delam-slow-apriori-3}
&\big\|u_\chi\big\|_{L^\infty(I;H^1(\Omega;\R^d))}\le C,
\\\label{ch6:delam-slow-apriori-4}
&\big\|z_\chi\big\|_{L^\infty(\SC)\,\cap\,BV(\bar I;L^1(\GC))}\le C
\end{align}\end{subequations}
\ITEM{}{with $C$ independent of $\chi$.}
\ITEM{(ii)}{There are $u\!\in\!L^\infty(I;H^1(\Omega;\R^d))$,
$z\!\in\!BV(\bar I;L^1(\GC))$, and $\mu\!\in\!\mathrm{Meas}(\bar Q)$, 
and a subsequence such that, for $\chi\to0$, }\\[-1em]
\begin{subequations}\label{ch6:delam-slow-conv}
\begin{align}\label{ch6:delam-slow-conv-a}
&\!
u_\chi(t)\to u(t)&&
\text{in }H^1(\Omega;\R^d)\ \ \text{for a.a. }  t\!\in\![0,T],\!\!\!\!\!\!\!\!&&&&
\\\label{ch6:delam-slow-conv-b}
&\!
z_\chi(t)\weaks z(t)&&\text{in }L^\infty(\GC)\ \ \ \ \ \;
\text{for all }\ \ t\!\in\![0,T],\!\!\!\!\!\!\!\!&&&&
\\\label{ch6:delam-slow-conv-c}
&\!
\chi\bbC e(\DT u_\chi){:}e(\DT u_\chi)\weaks\mu&&
\text{in }\mathrm{Meas}(\bar Q).&&&&
\end{align}\end{subequations}
\ITEM{(iii)}{Any triple 
$(u,z,\mu)$ obtained by this way fulfills, for a.a. $t\in[0,T]$, the momentum equilibrium 
in the weak form, i.e.}\\[-1em]
\begin{subequations}\label{semi-ES}\begin{align}
&\int_\Omega\bbC e(u(t)){:}e(v{-}u(t))\,\d x
\label{momentum-eq}
+\int_{\GC}\!\!z(t)\bbK u(t){\cdot}(v{-}u(t))\,\d S
\ge\big\langle\mathfrak f(t),v{-}u(t)\big\rangle
\end{align}
\ITEM{}{for all $v\in H^1(\Omega;\R^d)$ with $v|_{\GC}{\cdot}\vec{n}\ge0\,$, 
furthermore the semi-stability }
\begin{align}&\label{semi-stab}
\bbK u(t,x){\cdot}u(t,x)\le2\alpha(x)
\ \ \ \ \text{ or }\ \ \ \ z(t,x)=0\ \ \ \ \text{ for a.a.\ $x\!\in\!\GC$}
\end{align}
\ITEM{}{and eventually the energy equality}
\begin{align}&\calE(t,u(t),z(t))
+\int_{\GC}\!\!\!\alpha\big(z_0{-}z(t)\big)\,\d S
+\int_0^t\!\!\int_{\bar\Omega}\mu(\d x\d t)
=\calE(0,u_0,z_0)
+\int_0^t\!\big\langle \DT{\mathfrak f},u\big\rangle\,\d t.
\label{engr-eq1}
\end{align}\end{subequations}
\end{proposition}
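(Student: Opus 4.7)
My plan is to establish (i)--(iii) in order, following the standard pattern for rate-independent systems coupled with linear viscoelasticity (see e.g.\ the approach in \cite{Miel05ERIS,Roub09RIPV,Roub??ACVE}), with the main work concentrated in the limit passage for (iii). For (i), I would test the energy equality \eqref{engr-ineq} at a generic $t \in [0,T]$. Positive definiteness of $\bbC$ combined with Korn's inequality (applicable because $\text{meas}_{d-1}(\GDir) > 0$) converts the viscous-dissipation term into a lower bound of the form $c \chi \|\DT u_\chi\|_{L^2(I; H^1)}^2$, giving \eqref{ch6:delam-slow-apriori-2}. The storage $\calE(t,\cdot,z)$ is uniformly coercive on $\{u: u|_{\GDir}=0\}$ (because $0 \le z \le 1$ and by Korn), so a Gronwall step exploiting the $W^{1,1}$-in-time regularity of $\mathfrak f$ yields \eqref{ch6:delam-slow-apriori-3}. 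Finally, $\DT z_\chi \le 0$ together with $0 \le z_\chi \le 1$ gives $\|z_\chi\|_{BV(\bar I; L^1(\GC))} \le \|z_0\|_{L^1(\GC)}$, which is \eqref{ch6:delam-slow-apriori-4}.

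For (ii), Helly's selection theorem applied to $z_\chi$, monotone decreasing in $t$ and uniformly bounded in $L^\infty(\SC)$, yields (along a subsequence) the pointwise weak* convergence \eqref{ch6:delam-slow-conv-b}. Banach--Alaoglu applied to the nonnegative, $L^1(Q)$-bounded measures $\chi \bbC e(\DT u_\chi){:}e(\DT u_\chi)$ produces the defect measure $\mu$ in \eqref{ch6:delam-slow-conv-c}. A further diagonal argument together with the $L^\infty(I; H^1)$-bound gives weak $H^1$-convergence $u_\chi(t) \weak u(t)$ for a.a.\ $t$; upgrading this to strong convergence \eqref{ch6:delam-slow-conv-a} is the main technical step and is addressed at the end.

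For (iii), the crucial preliminary observation is that the rescaled strain $\chi e(\DT u_\chi)$ vanishes in $L^2(Q;\R^{d\times d})$: indeed $\|\chi e(\DT u_\chi)\|_{L^2(Q)}^2 = \chi \cdot \chi\|e(\DT u_\chi)\|_{L^2(Q)}^2 \le C\chi \to 0$ by \eqref{ch6:delam-slow-apriori-2}. Hence the viscous stress in \eqref{momentum-equilibrium-visc} disappears in the limit, while the only nonlinear boundary term $\int_{\SC} z_\chi \bbK u_\chi {\cdot} (v{-}u_\chi)\,\d S\,\d t$ is passed to the limit by combining \eqref{ch6:delam-slow-conv-b} with the Rellich compactness of the trace $H^1(\Omega;\R^d) \to L^2(\GC;\R^d)$; localisation in time then delivers \eqref{momentum-eq} for a.a.\ $t$. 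Semi-stability \eqref{semi-stab} is transferred from \eqref{semi-stab-visc} by a mutual-recovery-sequence construction (one may take $\tilde z_\chi = \tilde z \wedge z_\chi(t)$, exploiting the purely rate-independent structure of $\calR_0$ and the strong $L^2(\GC)$-convergence of the trace). The energy identity \eqref{engr-eq1} then follows by passing to the limit in \eqref{engr-ineq}: the viscous term identifies with $\mu([0,t]\times\bar\Omega)$ at every $t$ outside the at-most-countable set of atoms of $\mu$ on time-slices, the stored energies converge thanks to the strong $H^1$-convergence of $u_\chi(t)$, the $z$-dissipation converges by \eqref{ch6:delam-slow-conv-b}, and the work of external forces converges by dominated convergence using \eqref{ass1}.

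The delicate step --- and the main obstacle --- is upgrading weak to strong $H^1$-convergence of $u_\chi(t)$ for a.a.\ $t$. For this I would use that the limit $u(t)$ is uniquely determined by the convex (in $u$) variational inequality \eqref{momentum-eq} with data $z(t)$, and then combine weak lower semicontinuity of $\calE(t,\cdot,z(t))$ with an upper bound on $\limsup_{\chi\to0} \calE(t,u_\chi(t),z_\chi(t))$ extracted from the semi-stability \eqref{semi-stab-visc} tested against a recovery sequence for $z(t)$ and the power identity \eqref{engr-ineq}. The resulting convergence of the stored energies, together with weak convergence of $u_\chi(t)$, yields convergence of $\int_\Omega \bbC e(u_\chi(t)){:}e(u_\chi(t))\,\d x$, whence strong $H^1$-convergence follows from the uniform ellipticity of $\bbC$. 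The technical subtlety is that the defect measure $\mu$ must not contribute at the fixed time $t$, which is automatic for a.a.\ $t$ since $\mu$ is a finite Borel measure on $\bar Q$.
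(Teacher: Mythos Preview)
The paper does not prove this proposition: it is stated with attribution to \cite{Roub??ACVE} and the text moves directly to Definition~\ref{def-ES1} without giving any argument. So there is no in-paper proof to compare your proposal against.

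As an outline your approach is the standard one and is broadly sound; parts (i) and (ii) and the limit passages for \eqref{momentum-eq} and \eqref{semi-stab} are fine. The one weak spot is your route to the strong convergence \eqref{ch6:delam-slow-conv-a}: extracting an upper bound on $\limsup_{\chi\to0}\calE(t,u_\chi(t),z_\chi(t))$ ``from the semi-stability \eqref{semi-stab-visc} tested against a recovery sequence for $z(t)$ and the power identity \eqref{engr-ineq}'' is at best roundabout and at worst circular --- the semi-stability in $z$ gives no control on the bulk elastic energy in $u$, and the power identity already contains the viscous term you are simultaneously trying to identify with $\mu$. The direct argument is a Minty-type test: insert $v=u$ (the weak $L^2(I;H^1)$-limit, admissible since the Signorini constraint set is weakly closed) into \eqref{momentum-equilibrium-visc}; using $\chi e(\DT u_\chi)\to0$ in $L^2(Q;\R^{d\times d})$ and compactness of the trace $H^1(\Omega;\R^d)\hookrightarrow L^2(\GC;\R^d)$ one obtains $\limsup_{\chi\to0}\int_Q\bbC e(u_\chi{-}u){:}e(u_\chi{-}u)\,\d x\d t\le0$, hence $e(u_\chi)\to e(u)$ strongly in $L^2(Q;\R^{d\times d})$ and, along a further subsequence, $u_\chi(t)\to u(t)$ strongly in $H^1(\Omega;\R^d)$ for a.a.\ $t$. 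With this in hand, all terms in \eqref{engr-ineq} except the viscous one converge for a.a.\ $t$, forcing the viscous dissipation on $[0,t]$ to converge as well; its limit agrees with $\mu([0,t]{\times}\bar\Omega)$ at every $t$ outside the (at most countable) set of atoms of the time-marginal of $\mu$, which yields \eqref{engr-eq1} directly as an equality.
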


The above assertion suggests the following:

\begin{definition}\label{def-ES1} 
A triple $(u,z,\mu)$ with $u\!\in\!L^\infty(I;H^1(\Omega;\R^d))$,
$z\!\in\!BV(\bar I;L^1(\GC))$, and $\mu\!\in\!\mathrm{Meas}(\bar Q)$,  
$\mu\ge0$, is called a Kelvin-Voigt-approximable solution to the quasistatic 
rate-independent delamination problem \eqref{plain} with $\chi=0$
transformed by \eqref{rhs-new} if \eqref{semi-ES} holds for a.a.\ 
$t\!\in\!I$, and $z(0)=z_0$, and if $(u,z,\mu)$ is attainable by a sequence 
of viscous solutions $\{(u_\chi,z_\chi)\}_{\chi>0}$ in the sense 
\eqref{ch6:delam-slow-conv}.
\end{definition}

The measure $\mu\in\mathrm{Meas}(\bar Q)$, invented in \cite{Roub??ACVE}, 
occurring in Proposition~\ref{ch6:prop-slow-load-I} represents a certain
additional energy distributed over $\bar Q$ specified rather implicitly 
by \eqref{ch6:delam-slow-conv-c} but anyhow with a certain physical 
justification. Similar concept has been invented in various other problems 
in continuum 
mechanics (particularly of fluids) under the name of {\it defect measures}
to reflect a possible additional energy dissipation of solutions lacking 
regularity and exhibiting various concentration effects in contrast to 
regular weak solutions where the defect measure vanishes, cf.\ 
\cite{Gera91MDM,Feir03DCF,Naum06ETWS}.
Here, $\mu$ reflects the possible additional dissipated energy of 
Kelvin-Voigt-approximable solutions comparing to the so-called energetic 
solutions, cf.\ also Remark~\ref{rem-stress-vs-energy} below.
A nonvanishing $\mu$ is vitally important and rather desirable in 
the context of fracture mechanics in contrast to 
the mentioned fluid-mechanical applications where the phenomenon of 
nonvanishing $\mu$ is related ``only'' to a possible lack of regularity
of weak solutions and is not entirely clear whether it has some physical
justification and supported experimental evidence.

\section{Time discretisation, convergence}\label{sec-disc}

Some solutions to the initial-boundary value problem \eqref{plain} 
in accord to Definition~\ref{def-ES} can be obtained rather 
constructively by a {\it semi-implicit time discretisation}.
To facilitate the a-priori estimates, we again consider the transformation
\eqref{rhs-new}. Using an equidistant partition of the time interval $[0,T]$
with a time step $\tau>0$ such that $T/\tau\in\N$, we consider:
\begin{subequations}\label{semi-impl}\begin{align}
\label{semi-impl-1}
&\mathrm{div}\,\bbC\epsilon_\tau^k+f_\tau^k=0
\qquad\text{ with }\ 
\epsilon_\tau^k=\chi e\Big(\frac{u_\tau^k{-}u_\tau^{k-1}}\tau\Big)+e(u_\tau^k)
&&\text{on }\Omega,
\\\label{semi-impl-2}  
&u_\tau^k=0&&\text{on }\GDir,
\\\label{semi-impl-3}  
&\mathfrak{t}(\epsilon_\tau^k)=g_\tau^k&&\text{on }\GNeu,
\\\label{semi-impl-4}
   &\left.\begin{array}{ll}
&\hspace{-1.7em}
\mathfrak{t}_{\rm t}(\epsilon_\tau^k)+
z_\tau^{k-1}\big(\bbK u_\tau^k{-}\big((\bbK u_\tau^k){\cdot}\vec{n}\big)\vec{n}\big)=0,
\\[.3em]
&\hspace{-1.7em}
u_\tau^k{\cdot}\vec{n}\ge0,\ \
\mathfrak{t}_{\rm n}(\epsilon_\tau^k)
{+}z_\tau^{k-1}(\bbK u_\tau^k){\cdot}\vec{n}\ge0,
\ \ \big(\mathfrak{t}_{\rm n}(\epsilon_\tau^k)
{+}z_\tau^{k-1}(\bbK u_\tau^k){\cdot}\vec{n}\big)
\big(u_\tau^k{\cdot}\vec{n}\big)=0,\hspace{-1em}
\\[.3em]
&\hspace{-1.7em}z_\tau^k\le z_\tau^{k-1},\qquad\qquad\mathfrak{d}_\tau^k\le\alpha,
\qquad\qquad\qquad(z_\tau^k-z_\tau^{k-1})(\mathfrak{d}_\tau^k-\alpha)=0,
 \\[.3em] &\hspace{-1.7em}
\mathfrak{d}_\tau^k\in \frac12\bbK u_\tau^k{\cdot}u_\tau^k+N_{[0,1]}(z_\tau^k)
   \end{array}\ \ \right\}\!\!\!\!
   &&\text{on }\GC,
\end{align}\end{subequations}
with $\mathfrak{t}(\cdot)$, $\mathfrak{t}_{\rm n}(\cdot)$, and 
$\mathfrak{t}_{\rm t}(\cdot)$ from \eqref{def-of-T} and with 
$f_\tau^k=f(k\tau)$ and $g_\tau^k=g(k\tau)$ with $f$ and $g$ from 
\eqref{rhs-new}, and proceeding recursively for $k=1,...,T/\tau$ with 
starting for $k=1$ from 
\begin{align}
u_\tau^0=u_0\qquad\text{and}\qquad z_\tau^0=z_0.
\end{align}
The adjective ``semi-implicit'' is related with usage of $z_\tau^{k-1}$ 
in the first complementarity problem in \eqref{semi-impl-4},
instead of $z_\tau^k$ which would lead to a fully implicit formula.
Such usage of $z_\tau^{k-1}$ leads to the decoupling of the problem:
first we can solve (\ref{semi-impl}a-c) with the first complementarity 
problem in \eqref{semi-impl-4} for $u_\tau^k$ and only after 
the rest of \eqref{semi-impl-4} for $z_\tau^k$; in fact, this
can be understood as a fractional-step method, cf.\ also 
\cite[Remark~8.25]{Roub12NPDE}. In addition, we can 
employ the variational structure of both decoupled problems. 
We thus obtain two convex minimization problems: first, we are to solve
\begin{subequations}\label{semi-impl+}\begin{align}\label{semi-impl+1}
&\left.\begin{array}{ll}
\text{minimize}&\displaystyle{\calE(k\tau,u,z_\tau^{k-1})
+\tau\calR_\chi\Big(\frac{u{-}u_\tau^{k-1}}{\tau},0\Big)}
\\[.3em]
\text{subject to}&u\in H^1(\Omega;\R^d),\ \,u|_{\GDir}=0,\ \,
u|_{\GC}{\cdot}\vec{n}\ge0
\end{array}\right\}
\intertext{and, denoting its unique solution by $u_\tau^k$, then we solve}
\label{semi-impl+2}
&\left.\begin{array}{ll}
\text{minimize}&\displaystyle{\calE\big(k\tau,u_\tau^k,z\big)
+\calR_\chi\big(0,z{-}z_\tau^{k-1}\big)}
\\[.3em]
\text{subject to}&z\in L^\infty(\GC),\ \ \ 0\le z\le z_\tau^{k-1}\end{array}
\hspace{3.3em}\right\}
\end{align}\end{subequations}
with the stored energy $\calE$ and the dissipation (pseudo)potential 
$\calR_\chi$ defined here by
\begin{subequations}\label{E-R}\begin{align}\label{M}
&\calE(t,u,z)=\int_{\Omega}\!\frac12\bbC e(u){:}e(u)\d x
+\int_{\GC}\!\frac12z\bbK u{\cdot}u\d S
-\big\langle\mathfrak f(t),u\big\rangle,
\\&\calR_\chi(\DT u,\DT z)=\int_\Omega\frac\chi2\bbC e(\DT u){:} e(\DT u)\,\d x
-\int_{\GC}\!\!\alpha\DT z\,\d S.
\end{align}\end{subequations}
Note that the constraints $u|_{\GC}{\cdot}\vec{n}\ge0$, $0\le z\le 1$, 
and $\DT z\le0$, originally contained in $\calE$ and $\calR_\chi$ in
\eqref{engr-ineq-E} and \eqref{def-of-T-R}, are now included in 
\eqref{semi-impl+} so that
we can equivalently use the smooth functionals $\calE(t,\cdot,\cdot)$
and $\calR_\chi$ in \eqref{E-R}. Also 
note that $\calR_\chi(\DT u,\cdot)$ is  degree-1 homogeneous
so that the factor $\tau$ does not show up in the functional 
in \eqref{semi-impl+2}, in contrast to the degree-2 homogeneous functional
$\calR_\chi(\cdot,\DT z)$ in \eqref{semi-impl+1}.

The discrete analog of \eqref{momentum-equilibrium-visc} is 
by summation (for $k=1,...,T/\tau$) of the optimality conditions for 
\eqref{semi-impl+1} written at $u=u_\tau^k$, i.e.
\begin{align}
\int_\Omega\bbC\epsilon_\tau^k{:}e(v{-}u_\tau^k)\,\d x
+\int_{\GC}\!\!z_\tau^{k-1}\bbK u_\tau^k{\cdot}(v{-}u_\tau^k)\,\d S
\ge\big\langle \mathfrak f_\tau^k,v{-}u_\tau^k\big\rangle
\label{momentum-disc}
\end{align}
with $\epsilon_\tau^k$ from \eqref{semi-impl-1} and 
$\mathfrak f_\tau^k=\mathfrak f(k\tau)$, and
tested by an arbitrary test-function $v=v_\tau^k$.
By comparison of values of \eqref{semi-impl+2} at $z_\tau^k$ 
and an arbitrary $\tilde z$, we get 
$\calE(k\tau,u_\tau^k,z_\tau^k)+\calR_0(z_\tau^k{-}z_\tau^{k-1})\le
\calE(k\tau,u_\tau^k,\tilde z)+\calR_0(\tilde z{-}z_\tau^{k-1})$.
By the degree-1 homogeneity and the convexity of $\calR_0(\cdot)$,
we further get the triangle inequality 
$\calR_0(\tilde z{-}z_\tau^{k-1})\le \calR_0(z_\tau^k{-}z_\tau^{k-1})
+\calR_0(\tilde z{-}z_\tau^k)$. Re-organizing the first 
estimate and merging it with the second one, we obtain
the discrete analog of the semistability \eqref{semi-stab-visc}, namely:
\begin{align}
\calE(k\tau,u_\tau^k,z_\tau^k)&\le \calE(k\tau,u_\tau^k,\tilde z)
+\calR_0(\tilde z{-}z_\tau^{k-1})-\calR_0(z_\tau^k{-}z_\tau^{k-1})
\label{disc-stab}
\le\calE(k\tau,u_\tau^k,\tilde z)+\calR_0(\tilde z{-}z_\tau^k),
\end{align}
A discrete analog of \eqref{engr-ineq} as an inequality ``$\le$''
can be obtained by testing the optimality conditions for \eqref{semi-impl+1}
and \eqref{semi-impl+2} respectively 
by $u_\tau^k{-}u_\tau^{k-1}$ and $z_\tau^k{-}z_\tau^{k-1}$ (which, in fact,
means plugging $v=u_\tau^{k-1}$ into \eqref{momentum-disc} for 
the former test), and by adding it, benefiting from the cancellation of the 
terms $\pm\calE(k\tau,u_\tau^k,z_\tau^{k-1})$ and by the separate
convexity of $\calE(t,\cdot,\cdot)$. This gives the estimate
\begin{align}\label{engr-ineq-disc}
\calE(k\tau,u_\tau^k,z_\tau^k)
+\tau\sum_{l=1}^k\calR_\chi\Big(\frac{u_\tau^l{-}u_\tau^{l-1}}\tau,
\frac{z_\tau^l{-}z_\tau^{l-1}}\tau\Big)
\le\calE(0,u_0,z_0)+\tau\sum_{l=1}^k
\Big\langle\frac{f_\tau^l{-}f_\tau^{l-1}}\tau,u_\tau^{l-1}\Big\rangle.
\end{align}

Let us by $u_{\chi,\tau}$ denote the continuous piecewise affine interpolant of 
the values $(u_\tau^k)_{k=0}^{T/\tau}$, and by $\bar u_{\chi,\tau}$ the piecewise 
constant ``backward''  interpolant, while $\underline u_{\chi,\tau}$  the 
piecewise constant ``forward'' interpolant. Analogously, we introduce
$z_{\chi,\tau}$ and $\underline z_{\chi,\tau}$ 
interpolating values $(z_\tau^k)_{k=0}^{T/\tau}$, and also
$\bar{\mathfrak{f}}_\tau$ and ${\mathfrak{f}}_\tau$ interpolating values 
$(\mathfrak{f}_\tau^k)_{k=0}^{T/\tau}$. In terms of these interpolants, we can 
write \eqref{momentum-disc}, \eqref{disc-stab}, and \eqref{engr-ineq-disc} 
more ``compactly'' as
\begin{subequations}\label{semi-engr-disc}
\begin{align}\label{semi-engr-disc-1}
&\int_Q\bbC e(\chi\DT{u}_{\chi,\tau}{+}\bar u_{\chi,\tau})
{:}e(v{-}\bar u_{\chi,\tau})\,\d x\d t
+\int_{\SC}\!\!\underline z_{\chi,\tau}
\bbK\bar u_{\chi,\tau}{\cdot}(v{-}\bar u_{\chi,\tau})\,\d S\d t
\ge\int_0^T\!\big\langle \bar{\mathfrak{f}}_\tau,v{-}\bar u_{\chi,\tau}\big\rangle
\d t
\intertext{for any $v\in L^2(I;H^1(\Omega;\R^d))$ with
$v|_{\SC}{\cdot}\vec{n}\ge0$, and}
\label{semi-engr-disc-2}&\calE(t,u_{\chi,\tau}(t),z_{\chi,\tau}(t))\le
\calE(t,u_{\chi,\tau}(t),\tilde z)+\calR_0(\tilde z{-}z_{\chi,\tau}(t)),
\intertext{for any $\tilde z\in L^\infty(\GC)$ with 
$0\le\tilde z\le z_{\chi,\tau}(t)$ on $\GC$, and}
&\nonumber
\int_0^t\!\!\bigg(\int_\Omega\chi\bbC e(\DT u_{\chi,\tau}){:} e(\DT u_{\chi,\tau})\,\d x
-\langle\DT{\mathfrak{f}}_{\tau},\underline u_{\chi,\tau}\rangle
\bigg)\,\d t+\int_{\GC}\!\!\alpha\big( z_0{-} z_{\chi,\tau}(t)\big)\,\d S
\\&\label{semi-engr-disc-3}
\hspace{12.7em}
+\calE(t,u_{\chi,\tau}(t),z_{\chi,\tau}(t))-\calE(0,u_0,z_0)
=:\mathfrak E_{\chi,\tau}(t)\le0,
\end{align}\end{subequations}
for any $t=k\tau$, $k=1,...,T/\tau$.

Existence of $(u_\tau^k,z_\tau^k)$ solving \eqref{semi-impl} is
simply by a direct method applied to the underlined variational
problems \eqref{semi-impl+}. Fixing $\chi>0$, we can investigate the 
convergence for $\tau\to0$. By a-priori estimates we have at disposal 
from \eqref{semi-engr-disc-3}, using Banach's and Helly's selection 
principles, we have immediately:

\begin{lemma}\label{lem-disc} Assuming \eqref{ass}, $(u_{\chi,\tau},z_{\chi,\tau})$ 
constructed recursively by \eqref{semi-impl} exists and, for $\chi>0$ fixed,
there is a subsequence (indexed by $\tau$'s converging to 0) 
and $u_\chi\in H^1(I;H^1(\Omega;\R^d))$ and 
$z_\chi\in L^\infty(\SC)\cap BV(\bar I;\mathrm{Meas}(\GC))$ such that
\begin{subequations}\label{delam-conv}
\begin{align}\label{delam-conv-a}
&u_{\chi,\tau}\weak u_\chi&&
\text{in }H^1(I;H^1(\Omega;\R^d)),&&&&
\\\label{delam-conv-b}
&z_{\chi,\tau}\weaks z_\chi&&
\text{in }L^\infty(\SC)\cap BV(\bar I;\mathrm{Meas}(\GC)),\ \text{ and}&&&&&&
\\\label{delam-conv-c}
&z_{\chi,\tau}(t)\weaks z_\chi(t)&&
\text{in }L^\infty(\GC)\ \ \ \text{ for any }t\in[0,T].
\end{align}
\end{subequations}
Any $(u_\chi,z_\chi)$ obtained by this way is an energetic solution
to \eqref{plain} due to Definition~\ref{def-ES}.
\end{lemma}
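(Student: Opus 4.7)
The plan is to follow the three-stage pattern standard for semi-implicit discretisations of rate-independent systems coupled with viscosity: construct the iterates with uniform a-priori bounds, extract convergent subsequences, and identify the limit as an energetic solution in the sense of Definition~\ref{def-ES}. At each step the minimisation \eqref{semi-impl+1} is a strictly convex quadratic problem over a closed convex subset of $H^1(\Omega;\R^d)$ (by \eqref{ass0}, the augmentation $\chi > 0$, and Korn's inequality using the homogeneous Dirichlet datum on $\GDir$), hence admits a unique $u_\tau^k$ by the direct method; for known $u_\tau^k$, problem \eqref{semi-impl+2} reduces to pointwise thresholding of a linear-in-$z$ integrand on the box $[0, z_\tau^{k-1}]$, yielding $z_\tau^k$. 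The discrete energy inequality \eqref{engr-ineq-disc}, combined with a discrete Gronwall step absorbing $\langle \DT{\mathfrak f}, u \rangle$ under \eqref{ass1}, then furnishes the uniform bounds $\|\bar u_{\chi,\tau}\|_{L^\infty(I;H^1(\Omega;\R^d))} + \sqrt{\chi}\,\|e(\DT u_{\chi,\tau})\|_{L^2(Q)} \leq C$, while monotonicity $0 \leq z_\tau^k \leq z_\tau^{k-1} \leq 1$ delivers the $L^\infty(\SC) \cap BV(\bar I; L^1(\GC))$ bound on $z_{\chi,\tau}$.

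For compactness I would apply Banach--Alaoglu to obtain \eqref{delam-conv-a}, noting that the three interpolants $u_{\chi,\tau}$, $\bar u_{\chi,\tau}$, $\underline u_{\chi,\tau}$ share the same weak limit $u_\chi$ because $\|\bar u_{\chi,\tau} - u_{\chi,\tau}\|_{L^\infty(I;L^2(\Omega;\R^d))} = O(\sqrt\tau)$ from the $\DT u$-estimate; Helly's selection principle in the weak-$*$ topology on $\{z : 0 \leq z \leq 1\}$ then supplies \eqref{delam-conv-c} and consequently \eqref{delam-conv-b}. Identification then proceeds piecewise through \eqref{semi-engr-disc}: the momentum inequality \eqref{momentum-equilibrium-visc} follows from \eqref{semi-engr-disc-1} by weak lower semicontinuity of the viscous-elastic quadratic together with strong $H^1$-convergence of $\bar u_{\chi,\tau}(t)$ at a.a.\ $t$; the semi-stability \eqref{semi-stab-visc} transfers from \eqref{semi-engr-disc-2} via the same strong convergence and pointwise weak-$*$ convergence of $z_{\chi,\tau}(t)$, with the competitor $\tilde z$ serving trivially as its own recovery sequence since $\calR_0$ is linear and weak-$*$ continuous; and the energy equality \eqref{engr-ineq} is recovered by the classical Mielke--Theil sandwich, the inequality $\leq$ being inherited from \eqref{semi-engr-disc-3} by weak lsc of the viscous dissipation, and $\geq$ coming from testing the limit momentum inequality by $v = u_\chi$ and integrating in time in combination with the limit semi-stability.

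The main obstacle I anticipate is securing the strong $H^1(\Omega;\R^d)$-convergence of $u_{\chi,\tau}(t)$ at almost every $t$, since only this allows the quadratic form $\int_{\GC} z\,\bbK u \cdot u\,\d S$ to pass to the limit against the merely weakly-$*$-convergent $z_{\chi,\tau}(t)$; both the limit semi-stability and the $\geq$ half of the energy equality depend critically on it. For fixed $\chi > 0$ this strong convergence can be extracted by testing \eqref{semi-engr-disc-1} by the weak limit $u_\chi$ and using the uniform $\bbC$-monotonicity \eqref{ass0} of the viscous term to obtain the $\limsup$ inequality $\limsup_\tau \int_\Omega \bbC e(\bar u_{\chi,\tau}(t)) : e(\bar u_{\chi,\tau}(t))\,\d x \leq \int_\Omega \bbC e(u_\chi(t)) : e(u_\chi(t))\,\d x$, an Aubin--Lions/Minty-type argument tied to the parabolic structure \eqref{abstract}. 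It is precisely this mechanism that breaks down as $\chi \to 0$, and whose failure is recorded in the defect measure $\mu$ of Proposition~\ref{ch6:prop-slow-load-I}.
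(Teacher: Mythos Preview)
Your plan follows essentially the same route as the paper: existence by the direct method applied to \eqref{semi-impl+}, a-priori bounds from the discrete energy inequality \eqref{semi-engr-disc-3}, and extraction of the convergences \eqref{delam-conv} via Banach--Alaoglu and Helly's selection principle. The paper itself does not spell out the limit passage or the a-posteriori proof of the energy \emph{equality}, referring instead to \cite{Roub09RIPV,Roub??ACVE}; your sketch of the $\ge$ half via ``testing the limit momentum inequality by $v=u_\chi$'' is the one place that understates the difficulty, since with the unilateral Signorini constraint this test gives nothing and one needs a genuine chain-rule argument (the paper later invokes a regularisation from \cite{RosRou11TARI} for exactly this step).
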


In fact, the last claim required a limit passage in \eqref{semi-engr-disc}
and then, a-posteriori, the proof of energy equality, which is rather 
technical and for details we refer to \cite{Roub09RIPV,Roub??ACVE}.

For further numerical study, cf.\ also Figures~\ref{fig:1-D-residuum} and 
\ref{fig:1-D-residuum+} below, the important feature is that the residuum 
$\mathfrak E_{\chi,\tau}\in L^\infty(I)$ in the discrete energy (im)balance 
\eqref{semi-engr-disc-3} can be controlled by making the time step $\tau>0$ 
sufficiently small:

\begin{proposition}\label{prop-Err}
Assuming again \eqref{ass} and $\chi>0$ fixed, it holds (even without 
any need of selection subsequences as in Lemma~\ref{lem-disc}) that  
\begin{align}\label{Err}
\lim_{\tau\to0}\|\mathfrak E_{\chi,\tau}\|_{L^p(I)}=0\qquad
\text{ for any }1\le p<\infty.
\end{align}
Moreover, for a selected subsequence satisfying \eqref{delam-conv},
the weak convergence \eqref{delam-conv-a} is, in fact, strong, 
i.e.\ in particular
\begin{align}\label{strong-e}
e(\DT u_{\chi,\tau})\to e(\DT u_\chi)\qquad\text{ in }\ L^2(Q;\R^{d\times d}).
\end{align}
\end{proposition}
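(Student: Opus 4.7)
The plan is to exploit the fact that $\mathfrak E_{\chi,\tau}(t)\le0$ already and to show, via weak lower semicontinuity and the limit energy equality \eqref{engr-ineq}, that also $\liminf_{\tau\to0}\mathfrak E_{\chi,\tau}(t)\ge0$ for a.a.\ $t$. Pointwise a.e.\ convergence will be promoted to $L^p(I)$ by Lebesgue dominated convergence using the uniform $L^\infty(I)$-bound on $\mathfrak E_{\chi,\tau}$ from the a-priori estimates, and a standard sub-subsequence argument will upgrade subsequential convergence to convergence of the full family $\tau\to0$ (since $\chi>0$ is fixed, every sequence $\tau_n\to0$ admits, by Lemma~\ref{lem-disc}, a subsequence along which \eqref{delam-conv} holds with the limit being an energetic solution).

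Passing to the liminf in \eqref{semi-engr-disc-3} term by term: the viscous dissipation $\int_0^t\!\!\int_\Omega\chi\bbC e(\DT u_{\chi,\tau}){:}e(\DT u_{\chi,\tau})\,\d x\d t'$ is weakly lower semicontinuous on $L^2(0,t;H^1(\Omega;\R^d))$ by positive definiteness of $\bbC$; the bulk elastic term in $\calE(t,u_{\chi,\tau}(t),z_{\chi,\tau}(t))$ is likewise weakly lower semicontinuous under $u_{\chi,\tau}(t)\weak u_\chi(t)$ in $H^1(\Omega;\R^d)$ (valid for a.a.\ $t$ from \eqref{delam-conv-a}); the external-load terms $\int_0^t\langle\DT{\mathfrak f}_\tau,\underline u_{\chi,\tau}\rangle\d t'$ and $\langle\mathfrak f(t),u_{\chi,\tau}(t)\rangle$ converge by strong convergence $\DT{\mathfrak f}_\tau\to\DT{\mathfrak f}$ in $L^1(I;H^1(\Omega;\R^d)^*)$ (from $\mathfrak f\in W^{1,1}$) combined with the uniform bounds on $\underline u_{\chi,\tau}$; and the linear term $\int_{\GC}\alpha(z_0{-}z_{\chi,\tau}(t))\,\d S$ converges by \eqref{delam-conv-c}.

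The delicate term is the trilinear adhesive energy $\int_{\GC}\frac12z_{\chi,\tau}(t)\bbK u_{\chi,\tau}(t){\cdot}u_{\chi,\tau}(t)\,\d S$. Here the compactness of the trace operator $H^1(\Omega;\R^d)\to L^2(\GC;\R^d)$ upgrades $u_{\chi,\tau}(t)\weak u_\chi(t)$ in $H^1$ to strong convergence in $L^2(\GC;\R^d)$, so $\bbK u_{\chi,\tau}(t){\cdot}u_{\chi,\tau}(t)\to\bbK u_\chi(t){\cdot}u_\chi(t)$ strongly in $L^1(\GC)$; paired with the weak-$*$ convergence $z_{\chi,\tau}(t)\weaks z_\chi(t)$ in $L^\infty(\GC)$, the full product converges. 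Combining all these contributions with the energy equality \eqref{engr-ineq} for $(u_\chi,z_\chi)$ yields $\liminf\mathfrak E_{\chi,\tau}(t)\ge0$ for a.a.\ $t\in I$; together with $\mathfrak E_{\chi,\tau}(t)\le0$, this gives pointwise a.e.\ convergence, and dominated convergence then delivers \eqref{Err}.

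For \eqref{strong-e}, the same term-by-term argument applied at $t=T$ forces each nonnegative lower-semicontinuous term to converge to its limit, hence in particular $\int_Q\chi\bbC e(\DT u_{\chi,\tau}){:}e(\DT u_{\chi,\tau})\,\d x\d t\to\int_Q\chi\bbC e(\DT u_\chi){:}e(\DT u_\chi)\,\d x\d t$. Since $\bbC$ is positive definite, the form $\langle\varphi,\psi\rangle_\bbC=\int_Q\bbC\varphi{:}\psi\,\d x\d t$ is an inner product equivalent to the $L^2(Q;\R^{d\times d}_{\rm sym})$ one; weak convergence $e(\DT u_{\chi,\tau})\weak e(\DT u_\chi)$ together with convergence of $\bbC$-norms then yields strong $L^2$-convergence. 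The main obstacle I anticipate is the trilinear adhesive term: its genuine convergence (rather than mere lower semicontinuity) is what forces the viscous dissipation to converge in \emph{norm}, thereby unlocking both \eqref{Err} and \eqref{strong-e}; carrying this out requires the strong trace embedding and careful tracking of the interpolant subscripts ($\bar{\,\cdot\,}$, $\underline{\,\cdot\,}$, unadorned) in \eqref{semi-engr-disc}.
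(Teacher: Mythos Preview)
Your proposal is correct and uses the same ingredients as the paper's proof: weak lower semicontinuity of the quadratic terms, compactness of the trace for the trilinear adhesive term, the limit energy equality from Lemma~\ref{lem-disc}, dominated convergence via the uniform $L^\infty(I)$-bound and the sign $\mathfrak E_{\chi,\tau}\le0$, uniform convexity for norm-to-strong upgrade, and the sub-subsequence/uniqueness-of-limit argument for the full family. The only difference is organizational---the paper first establishes \eqref{strong-e} at $t=T$ via the chain \eqref{strong-conv-dissip} and then deduces pointwise (indeed, for all $t$) convergence of $\mathfrak E_{\chi,\tau}(t)$, whereas you run the $\liminf$ squeeze directly at each $t$ and extract \eqref{strong-e} afterwards; both orderings are valid and rest on the same identity $\mathfrak E_\chi\equiv0$.
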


\begin{proof}
Essentially, for \eqref{Err}, the only important point is to prove 
\eqref{strong-e}. Using 
\eqref{semi-engr-disc-3} for $t=T$, this can be seen from 
\begin{align}
&\nonumber
\hspace{-2.5em}\int_Q\chi\bbC e(\DT u_\chi){:} e(\DT u_\chi)\,\d x\d t
\le\liminf_{\tau\to0}
\int_Q\chi\bbC e(\DT u_{\chi,\tau}){:} e(\DT u_{\chi,\tau})\,\d x\d t
\le\limsup_{\tau\to0}
\int_Q\chi\bbC e(\DT u_{\chi,\tau}){:} e(\DT u_{\chi,\tau})\,\d x\d t
\\&\nonumber
\ \le\calE(0,u_0,z_0)+\limsup_{\tau\to0}
\bigg(\int_0^T\!\!\langle\DT{\mathfrak{f}}_\tau,\underline u_{\chi,\tau}\rangle\,\d t
-\int_{\GC}\!\!\!\alpha\big( z_0{-} z_{\chi,\tau}(T)\big)\,\d S-\calE(T,u_{\chi,\tau}(T),z_{\chi,\tau}(T))\bigg)
\\&
\nonumber
\ \le\calE(0,u_0,z_0)
+\int_0^T\langle\DT{\mathfrak{f}},u_{\chi}\rangle\,\d t-\int_{\GC}\!\!\alpha\big( z_0{-} z_{\chi}(T)\big)\,\d S
-\calE(T,u_{\chi}(T),z_{\chi}(T))
\\
\label{strong-conv-dissip}
&\ 
=\int_Q\chi\bbC e(\DT u_\chi){:} e(\DT u_\chi)\,\d x\d t.
\end{align}
Note that, by \eqref{delam-conv-b}, we have at disposal 
$u_{\chi,\tau}(T)\weak u_{\chi}(T)$ in $H^1(\Omega;\R^d)$ if
$\chi>0$ is fixed, which we used together with \eqref{delam-conv-c} for $t=T$
to estimate $\limsup_{\tau\to0}-\calE(T,u_{\chi,\tau}(T),z_{\chi,\tau}(T))
\le-\calE(T,u_{\chi}(T),z_{\chi}(T))$ in \eqref{strong-conv-dissip}.
Eventually, the last equality in \eqref{strong-conv-dissip}
can be proved by limiting a regularization of the Signorini condition,
cf.\ \cite[Step 4 in Sects.\,8-9]{RosRou11TARI}. As a result,
\eqref{strong-conv-dissip} proves 
\begin{align}\label{delam-conv-a++}
\lim_{\tau\to0}
\int_Q\chi\bbC e(\DT u_{\chi,\tau}){:} e(\DT u_{\chi,\tau})\,\d x\d t
=\int_Q\chi\bbC e(\DT u_\chi){:} e(\DT u_\chi)\,\d x\d t.
\end{align}
Using uniform convexity of the space $L^2(Q;\R^{d\times d})$ 
equipped with the norm $\|e\|:=(\int_Q\bbC e{:}e\,\d x\d t)^{1/2}$,
\eqref{delam-conv-a} with \eqref{delam-conv-a++} allows
for improvement of \eqref{delam-conv-a} to the strong convergence
\begin{align}\label{delam-conv-a+}
u_{\chi,\tau}\to u_\chi&&
\text{in }H^1(I;H^1(\Omega;\R^d)),&&&&
\end{align}
hence also \eqref{strong-e} is proved.

Therefore, $\int_0^t\!\int_\Omega\chi\bbC e(\DT u_{\chi,\tau}){:}e(\DT u_{\chi,\tau})\,\d x\d t$
in \eqref{semi-engr-disc-3} converges to 
$\int_0^t\!\int_\Omega\chi\bbC e(\DT u_{\chi}){:} e(\DT u_{\chi})\,\d x\d t$
even uniformly in $t$. From \eqref{delam-conv-a+},
we have certainly $u_{\chi,\tau}(t)\to u_\chi(t)$ in $H^1(\Omega;\R^d)$
for any $t$, and using also \eqref{delam-conv-c},
we have $\lim_{\tau\to0}\calE(t,u_{\chi,\tau}(t),z_{\chi,\tau}(t))=
\calE(t,u_{\chi}(t),z_{\chi}(t))$ for any $t\in[0,T]$.
Thus we have the convergence in  \eqref{semi-engr-disc-3} 
for any $t\in[0,T]$. As the sequence 
$\{\mathfrak E_{\chi,\tau}\}_{\tau>0}$ does not alternate sign
and is bounded in $L^\infty(I)$, by Lebesgue theorem 
it converges to some $\mathfrak E_{\chi}\in L^\infty(I)$ 
strongly in $L^p(I)$ for any $1\le p<\infty$.
Thus we showed that, in the limit for $\tau\to0$,
\begin{align}
&\hspace*{-4em}\calE(t,u_{\chi}(t),z_{\chi}(t))
+\int_0^t\!\!\int_\Omega\chi\bbC e(\DT u_{\chi}){:} e(\DT u_{\chi})\,\d x
-\langle\DT{\mathfrak{f}},u_{\chi}\rangle\,\d t
\label{semi-engr-disc-3+}
-\calE(0,u_0,z_0)+\int_{\GC}\!\!\alpha\big(z_0{-} z_{\chi}(t)\big)\,\d S
=\mathfrak E_{\chi}(t).
\end{align}
We used already in \eqref{strong-conv-dissip} that the 
left-hand side of \eqref{semi-engr-disc-3+} is zero, i.e.\ here also
$\mathfrak E_{\chi}=0$, so that \eqref{Err} is proved.

Eventually, we can realize that, in contrast to \eqref{delam-conv}
and \eqref{strong-e}, the convergence \eqref{Err} holds even 
for the whole sequence (indexed by a-priori chosen countable
number of $\tau$'s), which can be seen by a standard (contradiction)
arguments based on uniqueness of the limit (here just 0). 
\end{proof}

It should be remarked that, however, we did not prove validity of \eqref{Err}
for $p=\infty$. Anyhow, even a coarser mode of convergence \eqref{Err}
can ensure that the inequality \eqref{semi-engr-disc-3} yields 
eventually the energy equality \eqref{engr-eq1}, as we will pursue
in what follows.

Having the time-discrete viscous scheme, one may think about 
a convergence for both $\chi\to0$ and $\tau\to0$ simultaneously 
to obtain the Kelvin-Voigt-approximable solution to the 
quasistatic rate-independent problem. Here a certain circumspection
has to be taken: obviously, 
$\lim_{\chi\to0}\chi\bbC e(\DT u_{\chi,\tau}){:}e(\DT u_{\chi,\tau})=0$ 
for any $\tau>0$ fixed, cf.\ also Remark~\ref{rem-inviscid} below;
in fact, this convergence is even strong in $W^{1,\infty}(I;L^1(\Omega))$.
Therefore clearly,
\begin{equation*}
\lim_{\tau\to0}\lim_{\chi\to0}\chi\bbC e(\DT u_{\chi,\tau}){:}e(\DT u_{\chi,\tau})=0
\end{equation*}
and the energy balance \eqref{engr-eq1} would be obtained with $\mu=0$
as an inequality only; cf. also Figure~\ref{fig:1-D-residuum+} below.
It is thus obvious that only some conditional convergence will lead to the 
desired $\mu$ and the energy equality  \eqref{engr-eq1} as before. 
Obviously, we 
must consider rather $\lim_{\chi\to0}\lim_{\tau\to0}$.
Linking \eqref{delam-conv-a+} with \eqref{ch6:delam-slow-conv-c},
we have 
\begin{align}\label{double-1}
\text{w*-}\lim_{\!\!\!\!\!\!\!\!\chi\to0}\lim_{\tau\to0}
\chi\bbC e(\DT u_{\chi,\tau}){:}e(\DT u_{\chi,\tau})
=\text{w*-}\lim_{\!\!\!\!\!\!\!\!\chi\to0}\chi\bbC e(\DT u_{\chi}){:}e(\DT u_{\chi})=\mu,
\end{align}
meant in $\text{Meas}(\bar Q)$, and by \eqref{Err} we have obviously also
\begin{align}\label{double-2}
\lim_{\chi\to0}\lim_{\tau\to0}\mathfrak E_{\chi,\tau}=0
\end{align}
meant in $L^p(I)$, $1\le p<\infty$. These two double limits 
can be merged under an implicit stability criterion 
$\mathscr{T}:\R^+\to\R^+$ so that both
\begin{align}\label{joint-conv}
\DDD{\text{w*-lim}}{\tau\le\mathscr{T}(\chi)}{\tau\to0\,,\ \chi\to0}
\chi\bbC e(\DT u_{\chi,\tau}){:}e(\DT u_{\chi,\tau})
=\mu\ \qquad\text{ and}\qquad
\DDD{\text{lim}}{\tau\le\mathscr{T}(\chi)}{\tau\to0\,,\ \chi\to0}\!\!\!
\mathfrak E_{\chi,\tau}=0;
\end{align}
cf.\ the arguments in the proof of \cite[Cor.\ 4.8(ii)]{BarRou11TVER}.
In this way, we obtain a Kelvin-Voigt approximable solution
according Definition~\ref{def-ES1}.
However, the stability criterion $\tau\le\mathscr{T}(\chi)$ is not explicit
and thus not of a direct usage in general.

\bigskip

\section{Numerical implementation and computational experiments}\label{sec-num}

A general observation is that \eqref{semi-impl+} represents
two recursive alternating linear-quadratic minimization problems which,
after another spatial discretisation leads to linear-quadratic programming.
On top of it, as no gradient of $z$ is involved in $\calE$, \eqref{semi-impl+2}
has a local character and allows, after a suitable discretisation of $\GC$,
decoupling on particular boundary elements. Therefore, conceptually the 
proposed scheme leads to a very efficient numerical strategy for fixed 
$\chi>0$ and $\tau>0$. 

The essential difficulty 
is realization of the convergence \eqref{joint-conv}. As the defect 
measure $\mu$ is typically not known (and, on top of it, is not unique),
we can hardly control the former convergence in \eqref{joint-conv}.
Yet, we can at least control the latter one. To this goal, we devise
the following conceptual algorithm relying on the convergence \eqref{Err}
considered with $p=1$:

\begin{center}\fbox{
\begin{minipage}{.52\textwidth}
\smallskip
(1) Set $\chi=\chi_0>0$ and $\tau=\tau_0>0$, and choose $\gamma>0$ fixed.\\
(2) Compute $(u_{\chi,\tau},z_{\chi,\tau})$ and 
$\|\mathfrak E_{\chi,\tau}\|_{L^1(I)}$.\\
(3) If not $\|\mathfrak E_{\chi,\tau}\|_{L^1(I)}\le C\chi^\gamma$,
then put $\tau:=\tau/2$ and go to (2).\\
(4) Put $\chi:=\chi/2$ and $\tau:=\tau/2$.\\
(5) If not $\chi\le\chi_{\rm final}$,  go to (2).\\
(6) The end.
\end{minipage}
}
\\[.3em]
\begin{minipage}[t]{.54\textwidth}\baselineskip=8pt
 {\small\sl Table\,2.\ } {\sl Conceptual strategy to converge with the 
viscosity $\chi$ and the time step $\tau$ 
to approximate the correct energy balance.}
\end{minipage}
\end{center}

\noindent
In this way, we have at least the energetics in the 
limit under control if $\chi_{\rm final}$ would be pushed to zero.

\begin{proposition}\label{prop-algorithm}
The procedure from Table~2 is an algorithm in the sense that, for 
the parameters $\chi_0>\chi_{\rm final}>0$, $\tau_0>0$, $C$, and $\gamma>0$
given, it ends after a finite number of loops, giving a solution
with a viscosity  $\chi$ smaller than the a-priori chosen $\chi_{\rm final}$. 
\end{proposition}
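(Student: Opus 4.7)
The plan is to decompose the termination argument into two parts matching the two nested loops: an outer loop that decreases $\chi$, and an inner loop (Step 3) that decreases $\tau$ while the required tolerance is violated. First I would observe that the outer iteration halves $\chi$ at Step (4), so after at most $N:=\lceil\log_2(\chi_0/\chi_{\mathrm{final}})\rceil+1$ outer passes the stopping condition in Step (5) is satisfied. Thus it suffices to prove that, for each outer value of $\chi$, the inner loop (Steps (2)--(3)) terminates after finitely many executions.

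The key ingredient here is Proposition~\ref{prop-Err} applied with $p=1$: for every fixed $\chi>0$,
\begin{equation*}
\lim_{\tau\to0}\|\mathfrak E_{\chi,\tau}\|_{L^1(I)}=0.
\end{equation*}
Since $C\chi^\gamma>0$ is a fixed positive threshold at the outer iteration in question, this limit implies the existence of some $\tau^*=\tau^*(\chi)>0$ such that $\|\mathfrak E_{\chi,\tau}\|_{L^1(I)}\le C\chi^\gamma$ for every $\tau\in(0,\tau^*)$. Starting from the current value $\tau$ at the entrance of Step (2), Step (3) halves $\tau$ whenever the test fails, so after at most $\lceil\log_2(\tau/\tau^*)\rceil^+$ halvings one has $\tau<\tau^*$, the criterion is met, and control passes to Step (4). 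In particular the inner loop terminates.

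Combining the two facts, the algorithm visits Step (2) at most finitely many times in total before the stopping condition $\chi\le\chi_{\mathrm{final}}$ in Step (5) is activated, whence it halts at Step (6). The value of $\chi$ produced at termination is the last one computed, which by construction satisfies $\chi\le\chi_{\mathrm{final}}$. The only nontrivial step is the appeal to Proposition~\ref{prop-Err}; I expect no further obstacle, since the existence of $(u_{\chi,\tau},z_{\chi,\tau})$ and the computability of $\|\mathfrak E_{\chi,\tau}\|_{L^1(I)}$ in Step (2) are already guaranteed by Lemma~\ref{lem-disc} and the variational structure of \eqref{semi-impl+}.
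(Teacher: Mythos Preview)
Your proof is correct and follows essentially the same approach as the paper: the only nontrivial point is termination of the inner loop at Step~(3), which you, like the paper, deduce from Proposition~\ref{prop-Err} with $p=1$, crucially using that \eqref{Err} holds for the \emph{whole} sequence of time steps and not merely along a subsequence. The paper's proof is terser but the substance is identical.
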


\begin{proof}
The only notable point is that, after finite number of 
refinement of the time discretisation, the condition
$\|\mathfrak E_{\chi,\tau}\|_{L^1(I)}\le C\chi^\gamma$ in Step (3) can
be fulfilled. This follows from \eqref{Err} and the fact, proved in
Proposition~\ref{prop-Err}, that this holds for the whole sequence
of the time partitions.
\end{proof}

\subsection{Spatial discretisation by BEM}

To launch computational experiments, one naturally needs to perform still a 
spatial discretisation. As $\calE(t,\cdot,z)$ and $\calR_\chi(\cdot,\DT z)$ 
are quadratic functionals, \eqref{semi-impl+1} is a quadratic problem with 
the only constraint on $\GC$ which are linear. This allocation of all nonlinear
effects exclusively on the boundary $\GC$ allows for using efficiently 
the boundary-element method (BEM) combined with linear-quadratic programming
treating the variables on $\GC$. 

BEM standardly uses so-called Poincar\'e-Steklov operators which are known in 
specific static cases, here in particular for the homogeneous isotropic 
elastic material which we consider in what follows. Yet, we have to calculate 
the visco-elastic modification and here we benefit from choosing 
the ansatz of the tensor of viscous moduli as simply proportional 
to the elastic moduli, i.e.\ $\chi\bbC$. Therefore we can use BEM with 
the same Poincar\'e-Steklov operators as in the static case 
only for a new variable $v_\tau^k:=u_\tau^k+\chi(u_\tau^k{-}u_\tau^{k-1})/\tau$;
then, in terms of this new variable, one obviously has the Kelvin-Voigt strain 
$\epsilon_\tau^k=e(v_\tau^k)$, the velocity $(u_\tau^k{-}u_\tau^{k-1})/\tau=
(v_\tau^k{-}u_\tau^{k-1})/(\tau{+}\chi)$, and the displacement
$u_\tau^k=(\tau v_\tau^k{+}\chi u_\tau^{k-1})/(\tau{+}\chi)$,
which is to be used in \eqref{semi-impl}, leading to the problem 
\begin{subequations}\label{semi-impl++}\begin{align}
\label{semi-impl-1++}
&\mathrm{div}\,\bbC e(v_\tau^k)+\mathfrak{f}_\tau^k=0
&&\text{on }\Omega,
\\\label{semi-impl-2++}  
&v_\tau^k=0&&\text{on }\GDir,
\\\label{semi-impl-3++}  
&\mathfrak{t}(e(v_\tau^k))=g_\tau^k&&\text{on }\GNeu,
\\\label{semi-impl-4++}
   &\left.\begin{array}{ll}
&\hspace{-1.7em}
\displaystyle{\mathfrak{t}_{\rm t}(e(v_\tau^k))+
z_\tau^{k-1}\Big(\bbK\frac{\tau v_\tau^k{+}\chi u_\tau^{k-1}}{\tau{+}\chi}
{-}\Big(\bbK 
\frac{\tau v_\tau^k{+}\chi u_\tau^{k-1}}{\tau{+}\chi}{\cdot}\vec{n}\Big)
\vec{n}\Big)=0,}
\\[.3em]
&\hspace{-1.7em}
\displaystyle{v_\tau^k{\cdot}\vec{n}\ge -\frac\chi\tau u_\tau^{k-1}
{\cdot}\vec{n},\ \ \ \ \,
\mathfrak{t}_{\rm n}(e(v_\tau^k))
{+}z_\tau^{k-1}\Big(\bbK 
\frac{\tau v_\tau^k{+}\chi u_\tau^{k-1}}{\tau{+}\chi}\Big){\cdot}\vec{n}\ge0,}
\\&\hspace{-1.7em}
\displaystyle{\Big(\mathfrak{t}_{\rm n}(e(v_\tau^k))
{+}z_\tau^{k-1}\Big(\bbK 
\frac{\tau v_\tau^k{+}\chi u_\tau^{k-1}}{\tau{+}\chi}
\Big){\cdot}\vec{n}\Big)
\big((\tau v_\tau^k{+}\chi u_\tau^{k-1}){\cdot}\vec{n}\big)=0,}\hspace{-1em}
\\[.3em]
&\hspace{-1.7em}z_\tau^k\le z_\tau^{k-1},\qquad\quad\ \
\mathfrak{d}_\tau^k\le\alpha,
\qquad\quad\ \ (z_\tau^k-z_\tau^{k-1})(\mathfrak{d}_\tau^k-\alpha)=0,
 \\[.3em] &\hspace{-1.7em}
\displaystyle{\mathfrak{d}_\tau^k\in \frac1{2(\tau{+}\chi)^2}\bbK 
\big(\tau v_\tau^k{+}\chi u_\tau^{k-1}\big){\cdot}
\big(\tau v_\tau^k{+}\chi u_\tau^{k-1}\big)
+N_{[0,1]}(z_\tau^k)}
   \end{array}\ \ \right\}\!\!\!\!
   &&\text{on }\GC,&&
\end{align}\end{subequations}
with $u_\tau^{k-1}=(\tau v_\tau^{k-1}{+}\chi u_\tau^{k-2})/(\tau{+}\chi)$
proceeding recursively for $k=1,... T/\tau\in\N$. Then, like 
\eqref{semi-impl+}, one constructs the corresponding 
minimization problems in terms of $(v_\tau^k,z_\tau^k)$. Also,
evaluation of the energy balance \eqref{semi-engr-disc-3} in terms of $v$ 
is possible at least approximately. More specifically, 
by using the Poincar\'e-Steklov operator for the auxiliary variable $v_\tau^k$
which gives the equilibrium stress (in contrast to $u_\tau^k$),
we calculate the test of the traction stress $\mathfrak{t}(e(v_\tau^k))$
by velocity, i.e.\ the boundary integral 
\begin{align}\nonumber
\int_\Gamma\mathfrak{t}(e(v_\tau^k)){\cdot}
\Big(\frac{u_\tau^k-u_\tau^{k-1}}\tau\Big)\,\d S&=
\int_\Omega\bbC e(v_\tau^k){:}e\Big(\frac{u_\tau^k-u_\tau^{k-1}}\tau\Big)\,\d x
-\Big\langle\mathfrak{f}_\tau^k,\frac{u_\tau^k-u_\tau^{k-1}}\tau\Big\rangle
\\\nonumber
&=\int_\Omega\Big(\chi{+}\frac\tau2\Big)
\bbC e\Big(\frac{u_\tau^k{-}u_\tau^{k-1}}\tau\Big)
{:}e\Big(\frac{u_\tau^k{-}u_\tau^{k-1}}\tau\Big)
+\frac1{2\tau}\bbC e(u_\tau^k){:}e(u_\tau^k)
\\\label{approx-energetics}
&\ \ -\frac1{2\tau}\bbC e(u_\tau^{k-1}){:}e(u_\tau^{k-1})
-\Big\langle\mathfrak{f}_\tau^k,\frac{u_\tau^k-u_\tau^{k-1}}\tau\Big\rangle
\end{align}
where again $\Gamma:=\partial\Omega$, we obtain 
approximately the rate of stored energy and dissipation together, 
which can be used to express the overall energy balance as
in \eqref{semi-engr-disc-3} at least approximately by using
boundary values only except the bulk contribution of $f$ from 
\eqref{rhs-of-f}, namely
\begin{align}\nonumber
&\hspace*{-4em}\int_0^t\!\bigg(\int_\Gamma\mathfrak{t}(e(\bar v_{\chi,\tau}))
{\cdot}\DT u_{\chi,\tau}\,\d S
-\langle\DT{\mathfrak{f}}_{\tau},\underline u_{\chi,\tau}\rangle\bigg)\,\d t
+\int_{\GC}\!\!\frac12z_{\chi,\tau}(t)\bbK u_{\chi,\tau}(t){\cdot}u_{\chi,\tau}(t)
-\frac12z_0\bbK u_0{\cdot}u_0+
\alpha\big( z_0{-} z_{\chi,\tau}(t)\big)\,\d S
\\&\qquad\qquad\qquad
-\langle\mathfrak{f}_{\tau}(t),u_{\chi,\tau}(t)\rangle
+\langle\mathfrak{f}_{\tau}(0),u_0\rangle
=\mathfrak E_{\chi,\tau}(t)+
\tau\int_0^t\!\int_\Omega\bbC e(\DT u{\chi,\tau}){:}e(\DT u_{\chi,\tau})\,\d x\d t,
\end{align}
cf.\ \eqref{M} and \eqref{semi-engr-disc-3}. The coefficient 
$\chi{+}\tau/2$ in \eqref{approx-energetics} makes 
this expression only an estimate of the actual energetics 
\eqref{semi-engr-disc-3} which would need rather $\chi$. This additional term 
$\frac12\tau\bbC e(\DT u_{\chi,\tau}){:}e(\DT u_{\chi,\tau})$
will vanish if $\tau\to0$, being of the order $\mathscr{O}(\tau/\chi)$. 
Yet, it may not be entirely negligible for small $\chi$, which is a certain drawback of
the BEM implementation. 

BEM also allows for avoiding transformation \eqref{rhs-new} of the 
Dirichlet condition if $u$ (or here rather $v$) is considered 
only on $\GC$ which is, due to \eqref{ass-geom}, far from $\GDir$
and thus the partial derivative $\calE_t'(\cdot,u,z)$ have a good
sense. In fact, the calculations presented below have been obtained by BEM
implemented by a so-called collocation method. 

In what follows, we use this implementation in a two-dimensional geometry 
and, as already mentioned, {\it isotropic material}. In this situation, the 
Poincar\'e-Steklov operator involved in BEM is well known; cf.\ 
\cite[Sect.2.2]{HsiWen08BIE}. As for the material, more specifically we use 
\begin{subequations}\label{bulk-mater}\begin{align}
&E=70\,\text{GPa}&&\text{(Young modulus)},\\
&\nu=\begin{cases}0& \text{(in Sect.\,\ref{sec-0D})},\\[-.3em]
0.35&\text{(in Sect.\,\ref{sec-2D})},\end{cases} 
&&\text{(Poisson ratio)};&&
\end{align}\end{subequations}
thus, with  $\delta_{ij}$ standing for the Kronecker symbol,
the elastic moduli tensor used in the previous sections takes the form 
\begin{align*}
\bbC_{ijkl}=\frac{\nu E}{(1{+}\nu)(1{-}2\nu)}\delta_{ij}\delta_{kl}
+\frac E{2{+}2\nu}(\delta_{ik}\delta_{jl}{+}\delta_{il}\delta_{jk}).
\end{align*}
The viscosity of material describe by the relaxation time $\chi$ will be varied 
and adjusted in particular cases below.

\subsection{Computational experiments: a simple test geometry}\label{sec-0D}

In this section, we test the two-dimensional algorithm on a 0-dimensional example 
from \cite{Roub??ACVE} where the viscous solutions as well as the limit for $\chi\to0$ are 
explicitly known, together with a resulting nontrivial defect measure $\mu$. 
We choose a rectangular specimen glued on one side and pulled on the 
opposite one by gradually increasing Dirichlet load in the normal direction, 
cf.\ Figure~\ref{fig6:delam-1D}. Choosing the Poisson ratio 0 makes the 
quasistatic problem essential 0-dimensional (i.e.\ the strain, stress,  
dissipation rates, and delamination $z$ are spatially constant). Such sort of
tests are common in building geophysical models where it is 
called a one-degree-of-freedom slider.

\begin{my-picture}{.7}{.12}{fig6:delam-1D}
\psfrag{GN}{\footnotesize $\GNeu$}
\psfrag{GD}{\footnotesize $\GDir$}
\psfrag{GC}{\footnotesize $\GC$}
\psfrag{elastic}{\footnotesize visco-elastic body}
\psfrag{obstacle}{\footnotesize rigid obstacle}
\psfrag{adhesive}{\footnotesize adhesive}
\psfrag{LC}{$L_c$}
\psfrag{L}{\footnotesize $L=\ $100\,mm}
\psfrag{H}{\footnotesize $H=$}
\psfrag{12.5}{\scriptsize 12.5mm}
\psfrag{loading}{\footnotesize loading}
\psfrag{number}{}
\psfrag{numbers}{\hspace*{-2.5em}\footnotesize boundary element numbers}
\hspace*{4em}\includegraphics[clip=true,width=.75\textwidth]{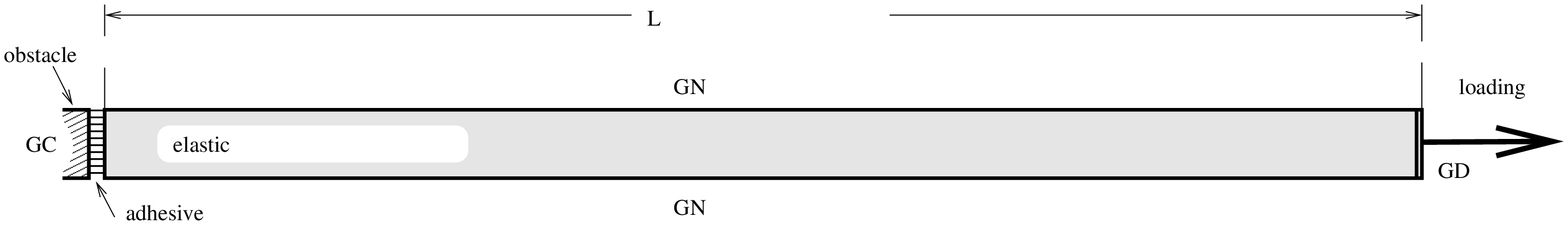}
\end{my-picture}
\begin{center}
{\small Fig.\,\ref{fig6:delam-1D}.\ }
\begin{minipage}[t]{.8\textwidth}\baselineskip=8pt
{\small
Essentially a 0-dimensional experiment (if the material is incompressible) 
with gradually increasing Dirichlet load and explicitly known solution.
}
\end{minipage}
\end{center}

\def\tKVnu{t_{_{^{\mathrm{RUP}}},\chi}}
\def\uKVnu{w_{_{^{\mathrm{RUP}}},\chi}}
\def\eKVnu{\E_{_{\mathrm{KV},}\mbox{$_\chi$}}}
\def\bC{E}
\def\bK{K}

\noindent
Considering still an isotropic adhesive $\bbK_{ij}=K\delta_{ij}$, 
the initial condition $u_0=0$ and $z_0=1$, and gradually increasing
Dirichlet load $w_\Dir(t)=v_\Dir t$, we know analytically the solution
of the viscous problem as well as the Kelvin-Voigt approximable
solution of the inviscid problem (which is probably even unique).
More specifically, there is a time, let us denote it by 
$\tKVnu$ or (for the limit $\chi\to0$) by $t_{_{^{\mathrm{RUP}}}}$,
when the spontaneous and complete rupture 
happens; $\tKVnu$ is determined only rather implicitly 
as a solution $t$ of the transcendental equation 
$(a_0t+b_\chi(1{-}{\rm e}^{-t/t_\chi}))^2=2\alpha/\bK$ 
with the coefficients\vspace*{-.5em}  
\begin{align}\label{coef}
a_0=\frac{\bC }{\bC{+}L\bK}v_\Dir, 
\qquad b_\chi=
-\chi\frac{L\bC\bK}{(\bC{+}L\bK)^2}v_\Dir,\quad\text{ and }\quad
t_\chi=\chi\frac{\bC}{\bC{+}L\bK},
\end{align} 
and for $\chi\to0$ it converges monotonically
to some limit, let us denote it by $t_{_{^{\mathrm{RUP}}}}$; more
specifically,\vspace*{-.5em} 
\begin{align}\label{t-break}
\tKVnu\nearrow 
t_{_{^{\mathrm{RUP}}}}=\frac{\bC{+}L\bK}{v_\Dir\bC}\sqrt{\frac{2\alpha}{\bK}}.
\end{align}
For $\chi>0$, the response is given by\vspace*{-.5em} 
\begin{subequations}\begin{align}
&z_\chi(t,x)=\begin{cases}1&\text{for }t<\tKVnu\,,\\
0&\text{for }t>\tKVnu\,,\end{cases}
\intertext{and the stress $\sigma_\chi(t,\cdot)=\sigma_\chi(t)$ 
and the viscous dissipation rate $\chi\bC e(\DT u_\chi){:}e(\DT u_\chi)$
are constant in space (with values denoted by $\sigma_\chi(t)$ and $r_\chi(t)$,
respectively), while the displacement $u_\chi(t,\cdot)$ is affine with 
$u_\chi(t,L)=w_\Dir(t)$ and $u_\chi(t,0)=w_\chi(t)$ with}
\nonumber\\[-3.3em]&\label{explicit-w}
w_\chi(t)=\begin{cases}(v_\Dir{-}a_0)t
-b_\chi\big(1{-}{\rm e}^{-t/t_\chi}\big)&\text{for }t<\tKVnu\,,\\
\big(\uKVnu{-}v_\Dir\tKVnu\big){\rm e}^{-(t-\tKVnu)/\chi}&\text{for }t>\tKVnu\,,
\end{cases}
\\&
\sigma_\chi(t)=\bK\frac{v_\Dir t{-}w_\chi(t)}L=\chi E\frac{v_\Dir{-}\DT w_\chi(t)}L
+E\frac{v_\Dir t{-}w_\chi(t)}L,
\label{explicit-s}
\\\label{explicit-r}&
r_\chi(t)= \begin{cases}
\displaystyle{\chi\frac{\bC\bK^2 v_\Dir^2}{(\bC{+}L\bK)^2}
\big(1{-}{\rm e}^{-t/t_\chi}\big)^2}
&\text{for }t<\tKVnu\,,
\\\displaystyle{\frac1\chi\bC
\Big(\frac{\uKVnu{-}v_\Dir\tKVnu}L\Big)^2
{\rm e}^{-2(t-\tKVnu)/\chi}}
&\text{for }t>\tKVnu\,,
\end{cases}\end{align}\end{subequations}
where $a_0$, $b_\chi$, and $t_\chi$ are from \eqref{coef}
and $\uKVnu:=(v_\Dir{-}a_0)\tKVnu-b_\chi\big(1{-}{\rm e}^{-\tKVnu/t_\chi}\big)$.
From \eqref{explicit-r}, one can see that
the viscous dissipation rate $\chi\bbC e(\DT u_\chi){:}e(\DT u_\chi)$
concentrates in time 
when $\chi\to0$ and, referring to \eqref{ch6:delam-slow-conv-c},
the resulting {\it defect measure} $\mu$ takes the form\vspace*{-.5em}  
\begin{align}\label{mu-known}
\mu=\frac{\calE_{_{^{\mathrm{RUP}}}}}{{\rm meas}(\Omega)}(\delta_{t_{_{^{\mathrm{RUP}}}}}\!\otimes 
1\hspace{-.3em}{\rm l})\qquad\text{ with }\ \ \ \calE_{_{^{\mathrm{RUP}}}}=\alpha\frac\bK\bC
\end{align}
with $\delta_{t}\!\in\!{\rm Meas}(\bar I)$ denoting the Dirac measure
supported at $t$ and $1\hspace{-.3em}{\rm l}\!\in\!{\rm Meas}(\Omega)$ is 
the spatial constant measure with density 1 (i.e.\ the Lebesgue measure)
on $\Omega$, and $\calE_{_{^{\mathrm{RUP}}}}$ is the energy stored in the bulk at the
time of rupture $t_{_{^{\mathrm{RUP}}}}$ when also the driving force 
$\mathfrak{d}_\chi=\frac12\bbK w_\chi{\cdot}w_\chi$ reaches the activation
threshold $\alpha$; cf.\ \cite{Roub??ACVE} for details about this calculation. 
Although $\mu$ is known and thus we could design the strategy from Table~2 
to control also the difference 
$\chi\bC e(\DT u_{\chi,\tau}){:}e(\DT u_{\chi,\tau})-\mu$
in some norm on ${\rm Meas}(\bar Q)$ which would be weakly* continuous, we 
intentionally do not want it because, in general (as also e.g.\ in 
Sect.~\ref{sec-2D} below), $\mu$ is not known.

In addition to \eqref{bulk-mater}, we consider $K=150\,$GPa/m,
$\alpha=375\,$J/m$^2$, $v_\Dir=267\,\mu$m/s,
and $T=0,375\,$s. The length of the specimen is $L=0.1\,$m, as already
depicted on  Figure~\ref{fig6:delam-1D}, while its
cross-section is not important in this experiment. It is important that
the implementation is able to hold the energetics with a good accuracy
that can be efficiently controlled by making the time step small, 
as proved theoretically in Proposition~\ref{prop-Err} and shown on
Figure~\ref{fig:1-D-residuum} for a moderate selected viscosity $\chi$.
The BEM spatial discretisation was coarse as all the quantities 
are either constant or affine in space in this ``1-dimensional'' example,
so the coarseness of the spatial discretisation is irrelevant.

\begin{my-picture}{.55}{.32}{fig:1-D-residuum}
\psfrag{*********************}{\footnotesize \hspace*{-2em}residuum $-\mathfrak{E}_{\chi,\tau}$ in 
\eqref{semi-engr-disc-3}}
\psfrag{tau=T/25}{\tiny $\tau=T/25$}
\psfrag{tau=T/50}{\tiny $\tau=T/50$}
\psfrag{tau=T/100}{\tiny $\tau=T/100$}
\psfrag{tau=T/200}{\tiny $\tau=T/200$}
\psfrag{tau=T/400}{\tiny $\tau=T/400$}
\psfrag{tau=T/800}{\tiny $\tau=T/800$}
\psfrag{time}{\hspace*{3.3em}\footnotesize time\hspace*{5em}$t$}
\hspace*{11em}\vspace*{0em}\includegraphics[clip=true,width=.53\textwidth,height=.32\textwidth]{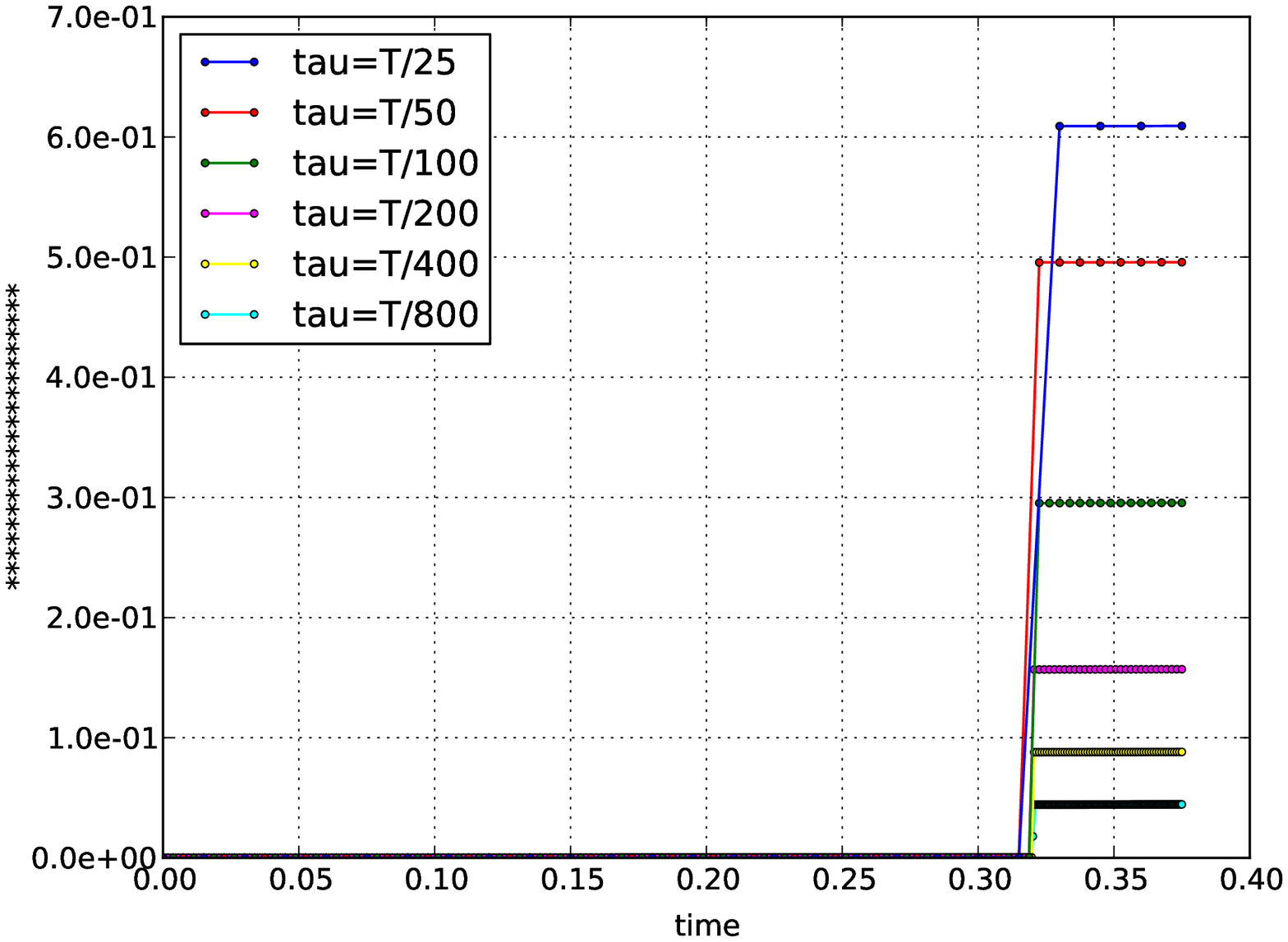}
\end{my-picture}
\\[-2.5em]
\begin{center}
{\small Fig.\,\ref{fig:1-D-residuum}.\ }
\begin{minipage}[t]{.75\textwidth}\baselineskip=8pt
{\small
Illustration of the time-dependent residuum $-\mathfrak{E}_{\chi,\tau}(\cdot)$ 
in the energy balance \eqref{semi-engr-disc-3} for $\tau$ gradually decreasing 
as depicted from up to down,
while $\chi=6.25\!\times\!10^{-3}\,$s is fixed. The numerical error occurs 
especially around sudden rupture but is shown to converge to 0 for $\tau\to0$, 
as also proved in \eqref{Err}.
}
\end{minipage}
\end{center}

The interplay between $\chi$ and $\tau$ and its influence on the energy
balance is depicted on Fig.\,\ref{fig:1-D-residuum+}, clearly showing 
a very slow (resp.\ no) convergence for small $\chi>0$ (resp.\ for $\chi=0$). 
The strategy from Table~2 chooses, in fact, a path decaying sufficiently slow 
from the left-upper corner towards the right-down corner in Fig.\,\ref{fig:1-D-residuum+}(left):

\begin{my-picture}{.8}{.3}{fig:1-D-residuum+}
\psfrag{v=0.0}{\scriptsize $\chi{=}\ 0$}
\psfrag{v=0.05}{\scriptsize $\chi{=}\ 50\ $ms}
\psfrag{v=0.025}{\scriptsize $\chi{=}\ 25\ $ms}
\psfrag{v=0.0125}{\scriptsize $\chi{=}12.5\,$ms}
\psfrag{v=0.00625}{\scriptsize $\chi{=}6.25\,$ms}
\psfrag{v=0.003125}{\scriptsize $\chi{=}3.125\,$ms}
\psfrag{v=0.0015625}{\scriptsize $\chi{=}1.5625\,$ms}
\psfrag{v=0.00078125}{\scriptsize $\chi{=}0.7813\,$ms}
\psfrag{v=0.000390625}{\scriptsize $\chi{=}0.3906\,$ms}
\psfrag{v=0.0001953125}{\scriptsize $\chi{=}0.1953\,$ms}
\psfrag{L1_norm}{\footnotesize $L^1$-norm of $\mathfrak{E}_{\chi,\tau}$}
\psfrag{Linf_norm}{\footnotesize $L^\infty$-norm of $\mathfrak{E}_{\chi,\tau}$}
\psfrag{time_step}{\footnotesize \!\!\!\!number of time steps $T/\tau$}
\hspace*{1em}\vspace*{0em}\includegraphics[clip=true,width=.45\textwidth,height=.3\textwidth]{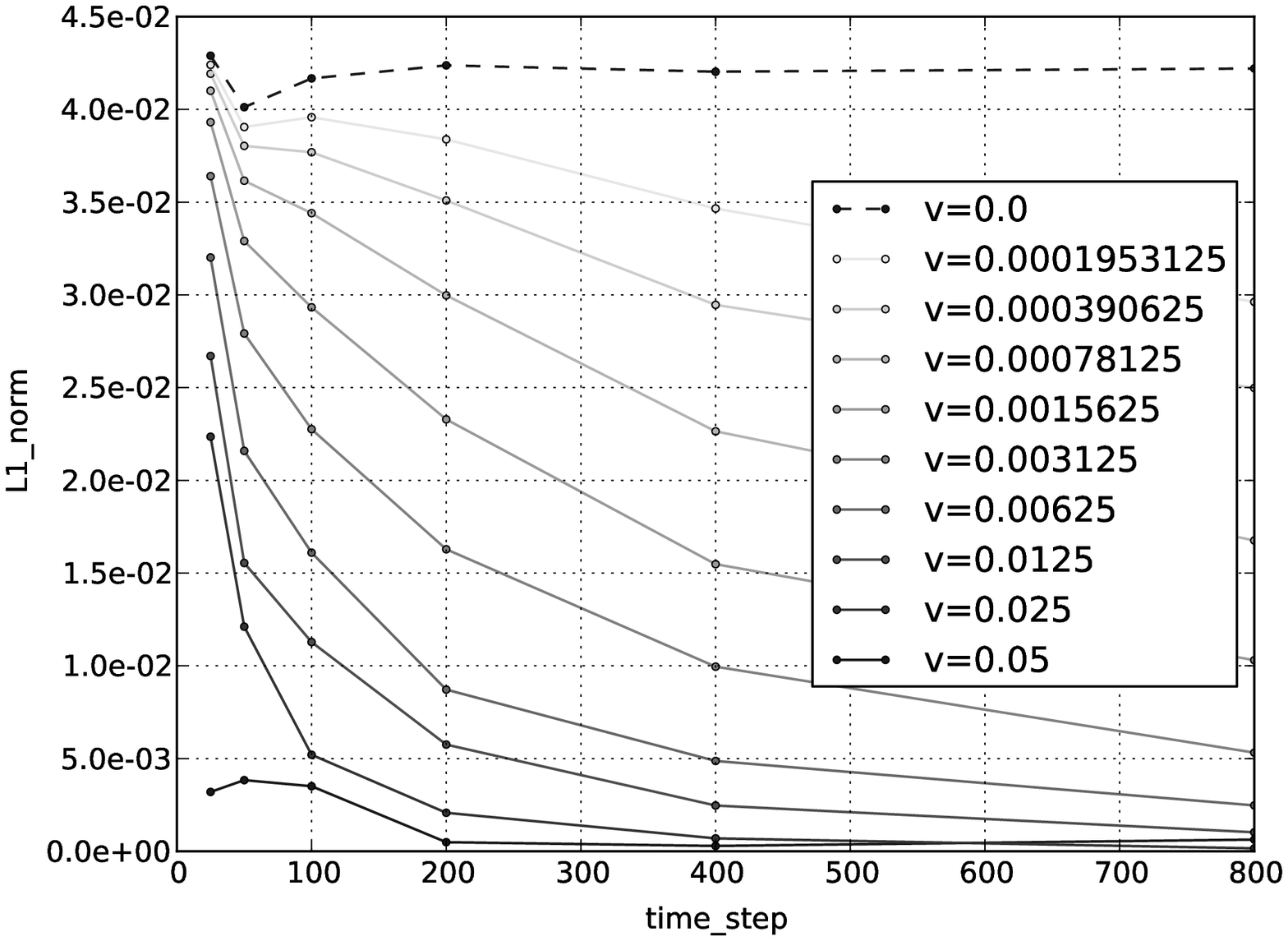}
\hspace*{2em}\vspace*{0em}\includegraphics[clip=true,width=.45\textwidth,height=.3\textwidth]{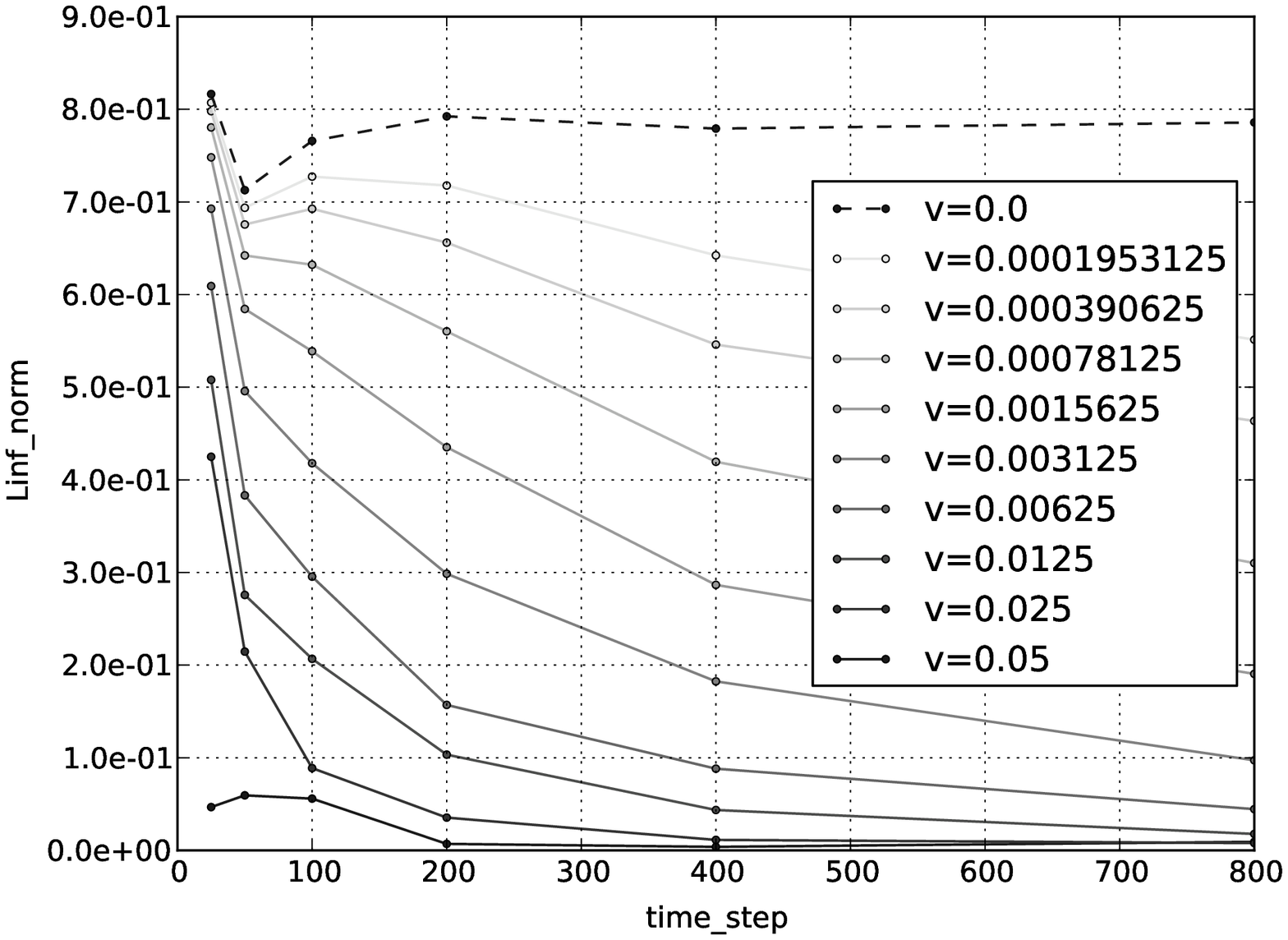}
\end{my-picture}
\\[-2.3em]
\begin{center}
{\small Fig.\,\ref{fig:1-D-residuum+}: Left:\ }
\begin{minipage}[t]{.9\textwidth}\baselineskip=8pt
{\small the convergence of $L^1$-norm of $\mathfrak{E}_{\chi,\tau}$ parametrized 
by $\chi$, documenting the theoretical result from Proposition~\ref{prop-Err}
for $p=1$.  
}
\end{minipage}
\\
{\small \hspace*{2.5em}Right:\ }\begin{minipage}[t]{.89\textwidth}
\baselineskip=8pt
{\small 
$L^\infty$-norm converges similarly in this example 
although this convergence is not theoretically supported by 
Proposition~\ref{prop-Err}.}
\end{minipage}
\end{center}

\noindent
For gradually vanishing viscosity $\chi$, Figure~\ref{fig:1-D-stress-strain} 
displays respectively $w_\chi$ and $\sigma_\chi$ from \eqref{explicit-w} and 
\eqref{explicit-s} calculated numerically by a sufficiently small time step 
$\tau$. 

\begin{my-picture}{.8}{.3}{fig:1-D-stress-strain}
\psfrag{v=0.025}{\scriptsize $\!\chi{=}\ 25\ $ms}
\psfrag{v=0.0125}{\scriptsize $\!\chi{=}12.5\,$ms}
\psfrag{v=0.00625}{\scriptsize $\!\chi{=}6.25\,$ms}
\psfrag{v=0.003125}{\scriptsize $\!\!\!\chi{=}3.125\,$ms}
\psfrag{time}{\hspace*{2em}\footnotesize time\hspace*{6em}$t$}
\psfrag{+++++++++++++++++++++}{\hspace*{-1em}\hspace*{8em}\footnotesize strain}
\psfrag{*********************}{\hspace*{-3em}\hspace*{8.5em}\footnotesize stress}
\hspace*{1em}\vspace*{0em}\includegraphics[clip=true,width=.45\textwidth,height=.3\textwidth]{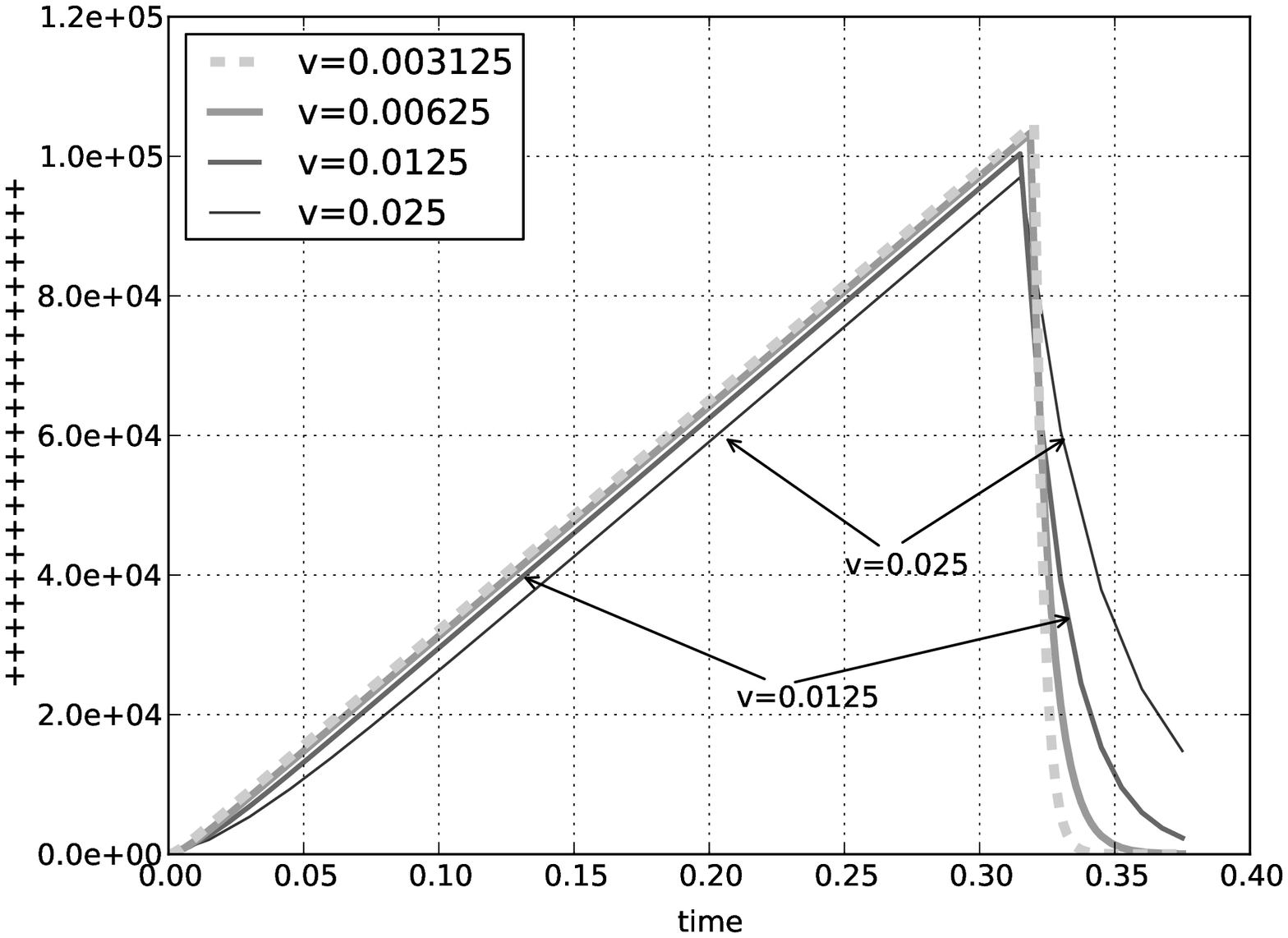}
\hspace*{2em}\vspace*{0em}\includegraphics[clip=true,width=.45\textwidth,height=.3\textwidth]{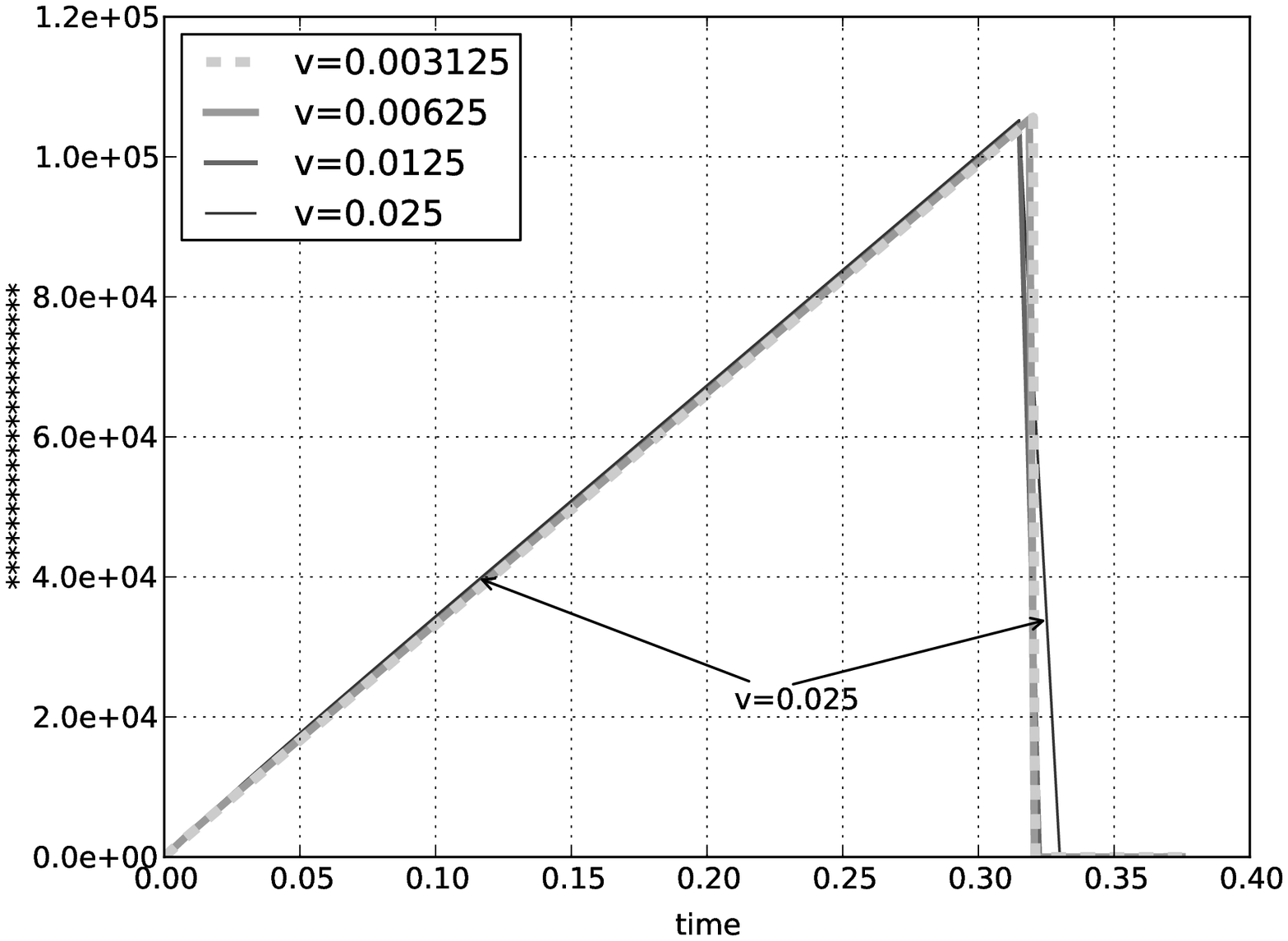}
\end{my-picture}\nopagebreak
\\[-2.5em]
\begin{center}
{\small Fig.\,\ref{fig:1-D-stress-strain}: }\begin{minipage}[t]{.9\textwidth}
\baselineskip=8pt
{\small The strain (left) and stress (right) response; due to the symmetry,
these tensors have only one nonzero component.}
\end{minipage}
\end{center}

\medskip

Here, the defect measure $\mu$ is known from \eqref{mu-known}; now with 
$t_{_{^{\mathrm{RUP}}}}=0.322\,$s and $\calE_{_{^{\mathrm{RUP}}}}=803.75$\,J/m$^3$; 
cf.\ \eqref{t-break} and \eqref{mu-known}. We can thus check the former 
convergence \eqref{joint-conv} at least a-posteriori, which allowed us at 
least to tune the parameters for the algorithm from Table~2. 
Figure~\ref{fig:1-D-visco-mu}-left displays $r_\chi$ from 
\eqref{explicit-r} calculated numerically by a sufficiently small
time step $\tau$. To visualize the weak* convergence to a Dirac measure,
we display rather the overall energy dissipated by viscosity
on the interval $[0,t]$, i.e.\  
$\int_0^t\chi\bbC e(\DT u_{\chi,\tau}){:}e(\DT u_{\chi,\tau})\,\d t$,
which should converge to $\int_0^t\mu\,\d t$ being just a jump 
at time $t_{_{^{\mathrm{RUP}}}}$ of the magnitude $\calE_{_{^{\mathrm{RUP}}}}$,
cf.\ Figure~\ref{fig:1-D-visco-mu}-right; again realize 
that spatial dependence is not interesting here as all
these quantities are constant in space. 

\begin{my-picture}{.8}{.28}{fig:1-D-visco-mu}
\psfrag{time_integral_viscous_dissipation_rate}{\hspace*{-2em}\scriptsize total viscous dissipated energy on $[0,t]$}
\psfrag{*********************}{\hspace*{-3.5em}\scriptsize viscous dissipation rate $r_{\chi,\tau}$ 
\ \ \ [TPa/s]}
\psfrag{3.0e+12}{\scriptsize \ \ \ 3.0}
\psfrag{2.5e+12}{\scriptsize \ \ \ 2.5}
\psfrag{2.0e+12}{\scriptsize \ \ \ 2.0}
\psfrag{1.5e+12}{\scriptsize \ \ \ 1.5}
\psfrag{1.0e+12}{\scriptsize \ \ \ 1.0}
\psfrag{5.0e+11}{\scriptsize \ \ \ 0.5}
\psfrag{0.0e+00}{\scriptsize \ \ \ \ 0}
\psfrag{v=0.025}{\scriptsize $\!\chi{=}\ \,25\ $ms}
\psfrag{v=0.0125}{\scriptsize $\!\chi{=}12.5\,$ms}
\psfrag{v=0.00625}{\scriptsize $\!\chi{=}6.25\,$ms}
\psfrag{v=0.003125}{\scriptsize $\!\!\!\chi{=}3.125\,$ms}
\psfrag{400}{\scriptsize 800}
\psfrag{Time}{\hspace*{2em}\footnotesize time\hspace*{6em}$t$}
\psfrag{time}{\hspace*{2em}\footnotesize time\hspace*{6em}$t$}
\hspace*{1em}\vspace*{0em}\includegraphics[clip=true,width=.45\textwidth,height=.28\textwidth]{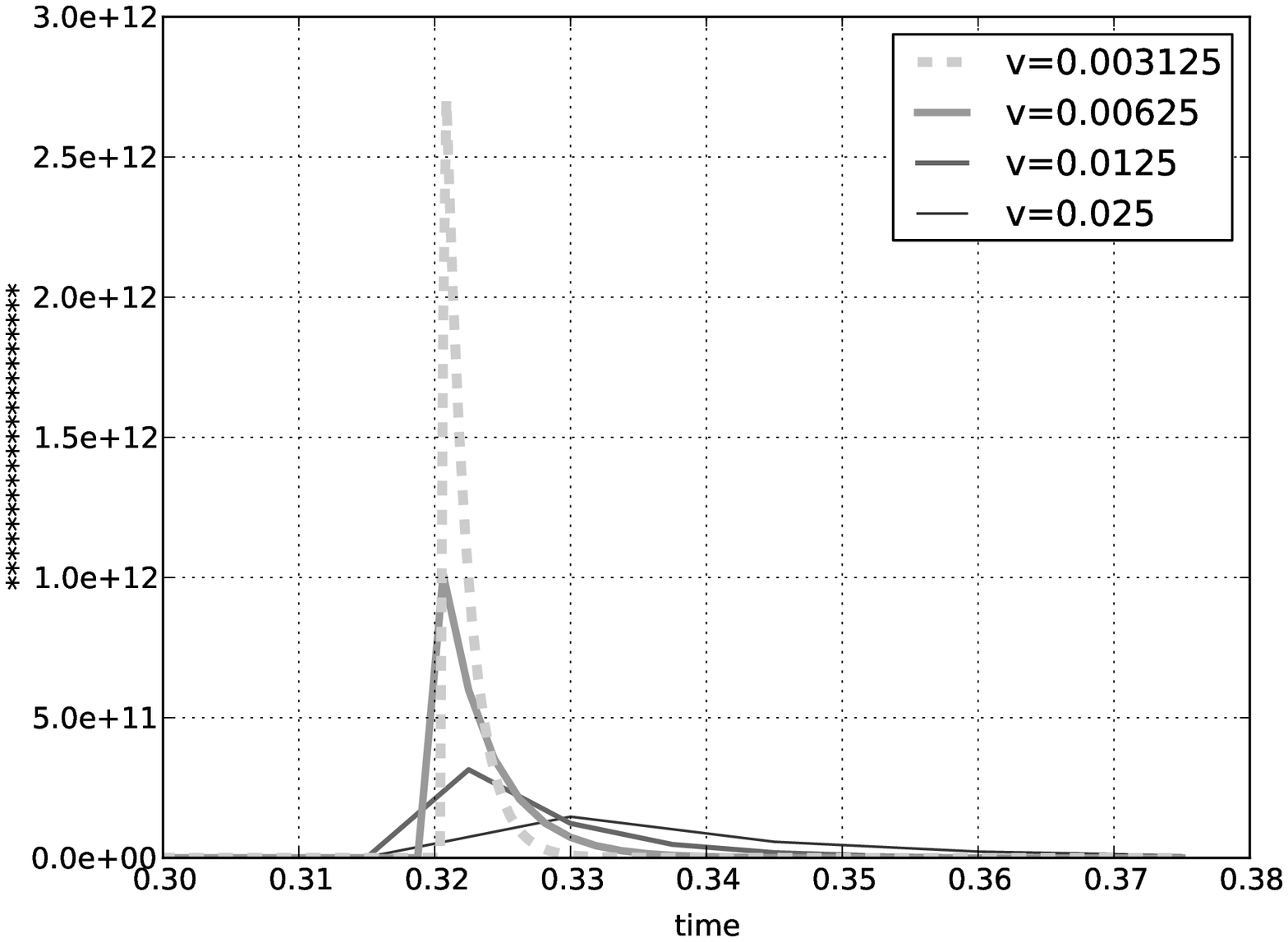}
\hspace*{2em}\vspace*{0em}\includegraphics[clip=true,width=.45\textwidth,height=.28\textwidth]{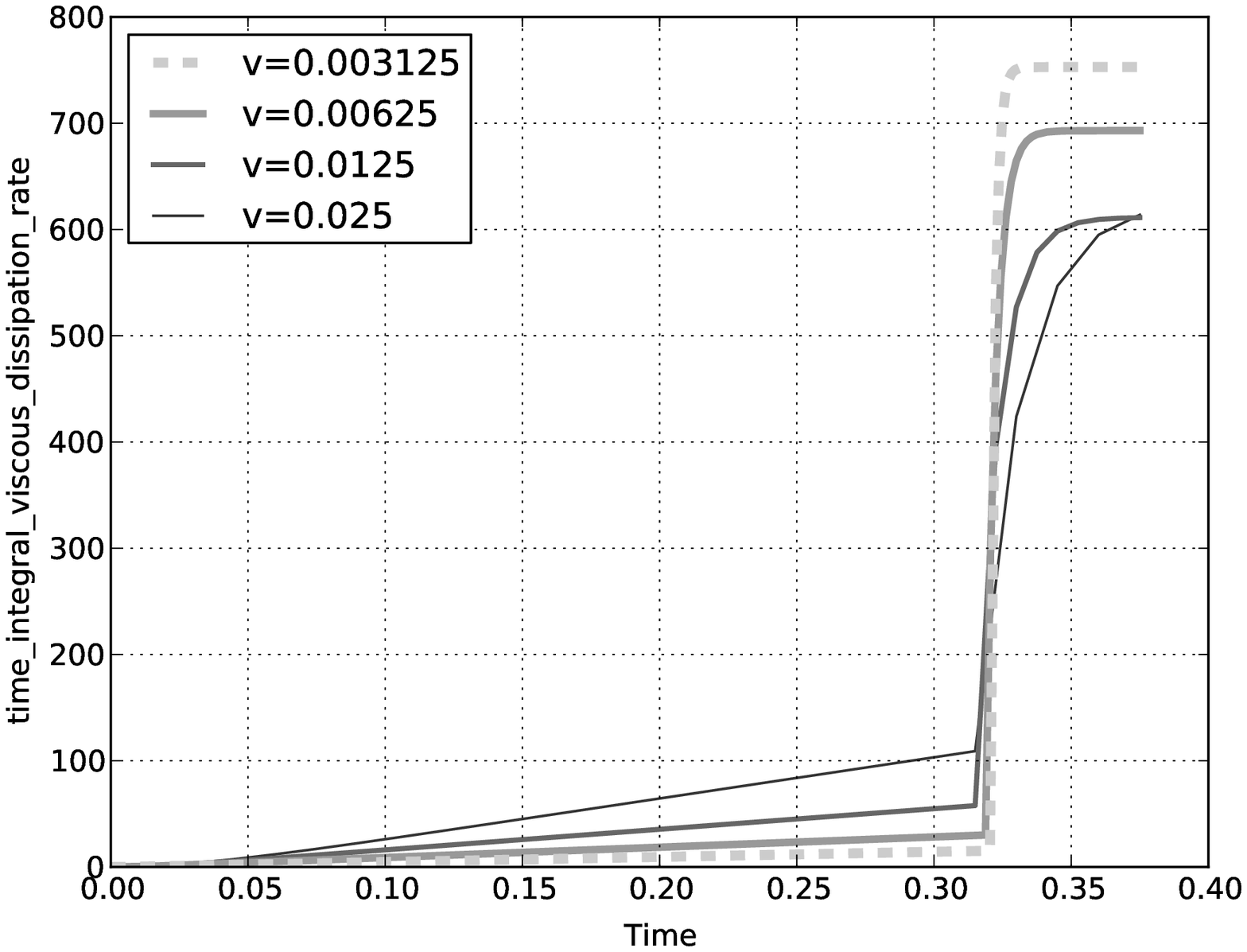}
\end{my-picture}\nopagebreak
\\[-2.5em]
\begin{center}
{\small Fig.\,\ref{fig:1-D-visco-mu}: Left:\ }
\begin{minipage}[t]{.9\textwidth}\baselineskip=8pt
{\small Convergence of the viscous dissipation rate 
$r_{\chi,\tau}=\chi\bbC e(\DT u_{\chi,\tau}){:}e(\DT u_{\chi,\tau})$ 
towards the defect measure $\mu$
from \eqref{mu-known}, i.e.\ here the Dirac at $t_{_{^{\mathrm{RUP}}}}=0.322\,$s for 
$\chi=0.025\!\times\!2^{-k}$ with $k=0,1,2,3$ and decreasing 
$\tau$ chosen according the strategy from Table~2, 
zoomed in and depicted on a selected time subinterval $[0.3\,,\,0.375]$.
}
\end{minipage}
\\
{\small \hspace*{2.5em}Right:\ }\begin{minipage}[t]{.9\textwidth}
\baselineskip=8pt
{\small Energy dissipated by viscosity over $[0,t]$, 
i.e.\ $\int_0^t\chi\bbC e(\DT u_{\chi,\tau}){:}e(\DT u_{\chi,\tau})\,\d t$,
converging to the jump at $t_{_{^{\mathrm{RUP}}}}=0.322\,$s
of the magnitude $\calE_{_{^{\mathrm{RUP}}}}=803.75$\,J. 
Also the convergence $\tKVnu\nearrow t_{_{^{\mathrm{RUP}}}}$ from \eqref{t-break}
is well documented.\\
}
\end{minipage}
\end{center}

\begin{remark}\label{rem-inviscid}(Direct calculation of inviscid problem.)
\upshape
In principle, our semi-implicit time discretisation works for 
$\chi=0$, too. Even, solving directly the inviscid problem is 
algorithmically much simpler. Yet, as pointed out at the end 
of Section~\ref{sec-disc}, we cannot expect reasonable results if $\chi$ will 
converge to 0 too fast with respect to $\tau$, and in particular if straight 
$\chi=0$ would be used. Here, we saw it already on 
Figure~\ref{fig:1-D-residuum+} where, for $\chi=0$, the error in the energy 
balance practically remains constant no matter how the time discretisation 
refines. On Figure~\ref{fig:1-D-visco-mu}, $\chi=0$ would cause all curves 
to degenerate simply to the $t$-axis, which shows {\it fatal non-convergence} 
of the overall viscous dissipation. Thus also the energy 
balance cannot hold. It is surprising 
that $u$-,  $\sigma$- and $z$-responses may still numerically converge
to the correct solutions, as documented on Figure~\ref{fig:1-D-violation},
which may seem to give a very efficient numerical strategy.  
We observed this phenomenon in all our calculations, in particular also 
on Figs.\,\ref{fig-2D-engr} and \ref{fig-2D-engr-2} 
below; cf.\ also the discussion in Sect.~\ref{sect-concl}.

\begin{my-picture}{.85}{.25}{fig:1-D-violation}
\psfrag{time}{\footnotesize time\hspace{7em}$t$}
\psfrag{*********************}{\hspace*{4em}\footnotesize strain}
\psfrag{+++++++++++++++++++++}{\hspace*{5em}\footnotesize stress}
\psfrag{near_zero}{\footnotesize\!\!viscous}
\psfrag{zero}{\footnotesize\!\!inviscid}
\hspace*{3em}\vspace*{0em}\includegraphics[clip=true,width=.4\textwidth,height=.25\textwidth]{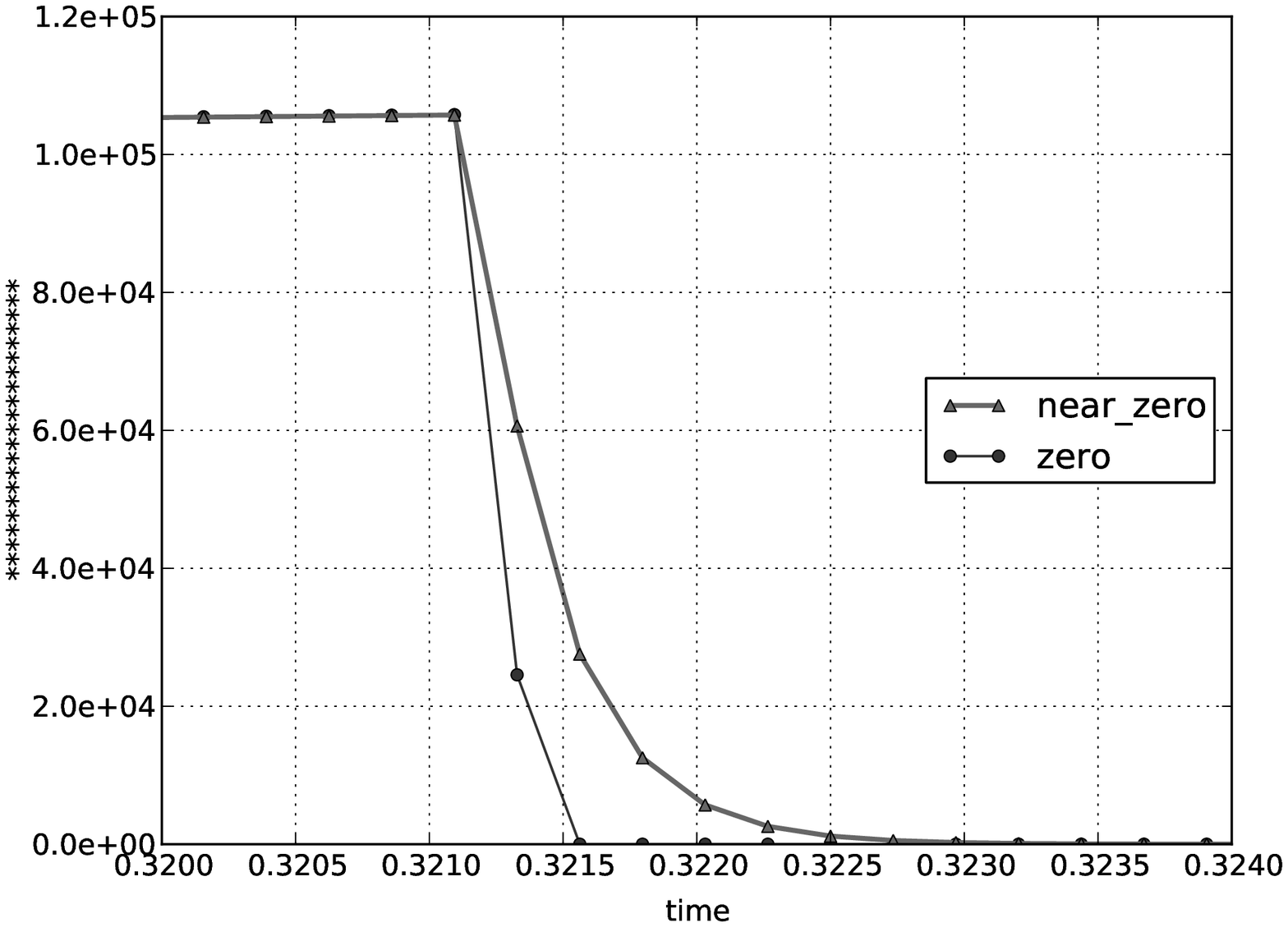}
\hspace*{3em}\vspace*{0em}\includegraphics[clip=true,width=.4\textwidth,height=.25\textwidth]{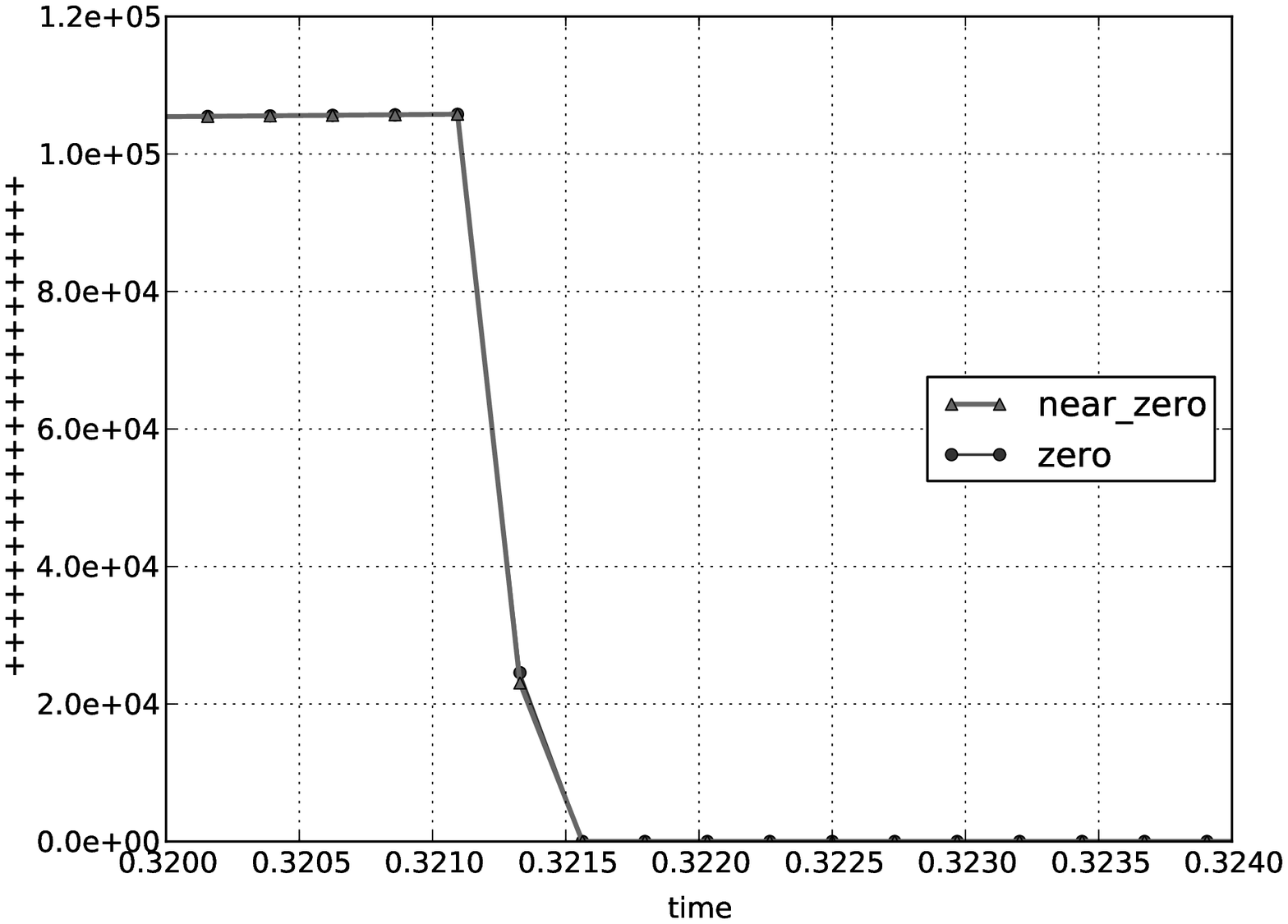}
\end{my-picture}
\\[-2.5em]
\begin{center}
{\small Fig.\,\ref{fig:1-D-violation}.\ }
\begin{minipage}[t]{.8\textwidth}\baselineskip=8pt
{\small A comparison of the strain (left) and stress (right) response
of a energetically justified small-viscosity solution with
an unphysical result without any viscosity obtained by a semi-implicit formula;
strongly zoomed in and depicted on a selected short time subinterval 
around rupture $[0.320\,,\,0.324]$:
a surprisingly good match is achieved although energy does not match at all (since
$\mu\equiv0$ without viscosity), cf.\ also Fig.\,\ref{fig:1-D-residuum+} for $\chi=0$.
}
\end{minipage}
\end{center}
\end{remark} 

\begin{remark}\label{rem-stress-vs-energy}
(Stress- versus energy-driven rupture.)
\upshape
The rupture of Kelvin-Voigt approximable solutions 
is essentially stress driven, while the energy-driven rupture 
(i.e.\ energy dissipated by delamination is compensated by 
the elastic energy got from the bulk and adhesive, being related
to so-called Levitas' maximum realizability principle and leading to
so-called energetic solutions, cf.\ \cite{MieThe04RIHM,MiThLe02VFRI}) 
occurs in general earlier, here in this simple example it would be at 
time $\sqrt{2\alpha(L\bK{+}\bC)/(v_\Dir^2\bK\bC)}$, as noted already in 
\cite{Roub??ACVE}, i.e.\ already at time 0.292\,s. The stress-driven rupture
seems to be much more natural (especially if a large bulk would lead 
to extremely early delamination) and is also preferred in engineering (where 
mostly the existence of solution and the calculations are not analytically 
justified, however), cf.\ \cite{Legu02STCC}, or also the discussion about 
energy versus stress or global versus local minimization in mathematical 
literature \cite{CFMT00RBFE,KnMiZa08ILMC,MiRoSa09MSJR,Stef09VCRI}. 
\end{remark}

\subsection{Computational experiments: a fully 2-D example}\label{sec-2D}

We now want to demonstrate applicability of the above developed
methodology and algorithms to nontrivial situations where the 
defect measure $\mu$ is not known and typically is inhomogeneous, i.e.\
not distributed uniformly in space. Although we keep correct 
energetics via tracking numerically the latter convergence in 
\eqref{joint-conv}, it should be emphasized that the calculations 
are not fully reliable because the former convergence in 
\eqref{joint-conv} cannot be checked. At this occasion, it should
be however also emphasized that all the previous studies about
defect measures have had been only purely theoretical
and analytically motivated  
(and being related, like here, with possible lack of 
regularity of weak solutions of various continuum-mechanical 
problems, exhibiting various concentration effects in contrast to 
regular weak solutions where the defect measure vanishes, cf.\ 
\cite{DipMaj87OCWS,Gera91MDM,Gren95DMVP,Feir03DCF,Naum06ETWS})
and, except \cite{Roub??ACVE}, existence of nontrivial defect measures 
had been rather only conjectured. In contrast to it, 
the simulations here represent, to our best knowledge, historically
the very first attempt to see particular nontrivial spatially non-homogeneous
defect measures.

For our computational experiment, we use a similar geometry as in 
Section~\ref{sec-0D} but, in contrast to Figure~\ref{fig6:delam-1D}, with
a delaminating surface $\GC$ on a different side and (in our 2nd experiment) 
also different direction of loading, both intentionally breaking the symmetry 
considered previously in Section~\ref{sec-0D}. We also consider more 
realistic Poisson ratio, cf.\ \eqref{bulk-mater}. The speed of loading was 
taken the same in both experiments: 
$w_\Dir(t)=v_\Dir t$ with $|v_\Dir|=333.3\,\mu$m/s; the direction of $v_\Dir$
was varied: horizontal in the 1st experiment and vertical in the second
experiment, cf.\ Figure\,\ref{fig_m1}. In addition to \eqref{bulk-mater}, we 
consider $\bbK={\rm diag}(K_{\rm n},K_{\rm t})$
with $K_{\rm n}=150\,$GPa/m and $K_{\rm t}=75\,$GPa/m,
and the fracture toughness $\alpha=187.5\,$J/m$^2$.
We use $\chi=0.01$s for all calculations

\begin{my-picture}{.7}{.17}{fig_m1}
\psfrag{GN}{\footnotesize $\GNeu$}
\psfrag{GD}{\footnotesize $\GDir$}
\psfrag{GC}{\footnotesize $\GC$}
\psfrag{elastic}{\footnotesize visco-elastic body $\Omega$}
\psfrag{obstacle}{\footnotesize rigid obstacle}
\psfrag{adhesive}{\footnotesize adhesive}
\psfrag{L}{\footnotesize 250\,mm}
\psfrag{L1}{\footnotesize 225\,mm}
\psfrag{H}{\footnotesize 25\,mm}
\psfrag{loading-I}{\footnotesize 1st loading}
\psfrag{experiment}{\footnotesize experiment}
\psfrag{loading-II}{\footnotesize 2nd loading}
\psfrag{experiment}{\footnotesize experiment}
\hspace*{4.3em}\vspace*{-.1em}{\includegraphics[width=.75\textwidth]{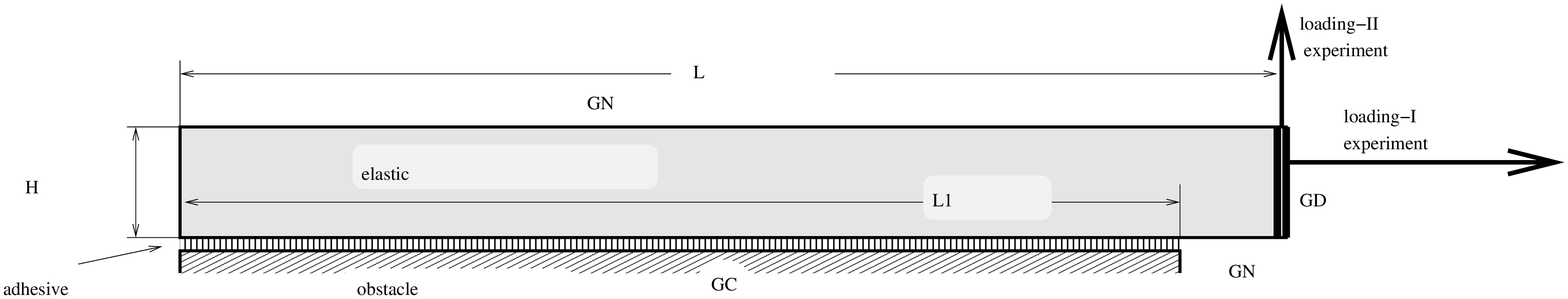}}
\end{my-picture}
\\[-2.5em]
\begin{center}
{\small Fig.\,\ref{fig_m1}.\ }
\begin{minipage}[t]{.57\textwidth}\baselineskip=8pt
{\small Geometry and boundary conditions of the 2-D problem considered.}
\end{minipage}
\end{center}

\noindent
Calculations with $\tau=5\!\times\!10^{-3}$s and $9.33\!\times\!10^{-3}$s have 
been performed 300 time steps, up to $T=1,5\,$s and $T=2.8\,$s for the 1st and 
the 2nd experiment, respectively. Such $T$ was big enough to achieve a 
complete delamination of the whole contact surface.

\subsubsection{Horizontal-loading experiment}\label{sect-2D-1st-exp}

In contrast to the example from Section~\ref{sec-0D}, we have now the traction 
force on $\GC$ nonhomogeneous, and it is interesting to see its evolution in 
time. This is depicted for 6 selected snapshots, starting from 
$t=0.21\,$s with an equidistant step $0.025\,$s, on Fig.\,\ref{fig-2D-I-movie} 
(upper and middle rows) which shows the delamination gradually propagating on 
$\GC$ from right to left. The displacement of the whole boundary has to be 
reconstructed by the Poincar\'e-Steklov operators which is a conventional 
procedure in BEM and is depicted on Fig.\,\ref{fig-2D-I-movie} (lower row).

\begin{my-picture}{.9}{.2}{fig-2D-I-movie}
\psfrag{time_step: 42}{}
\psfrag{time_step: 47}{}
\psfrag{time_step: 52}{}
\psfrag{time_step: 57}{}
\psfrag{time_step: 62}{}
\psfrag{time_step: 67}{}
\psfrag{time_step: 72}{}
\psfrag{normal_traction_x_(m)}{\tiny \hspace{-2em}$_{x\mbox{-coordinate}}$}
\psfrag{tangent_traction_x_(m)}{\tiny \hspace{-2em}$_{x\mbox{-coordinate}}$}
\psfrag{coordinate_y_(m)}{\tiny }
\psfrag{1000000}{\footnotesize 1}
\psfrag{0.10}{}
\begin{tabular}{ccccccc} 
\hspace*{-1.5em}\footnotesize $t=0.21$ & 
\hspace*{-1.5em}\footnotesize $t=0.235$ & 
\hspace*{-1.5em}\footnotesize $t=0.26$ & 
\hspace*{-1.5em}\footnotesize $t=0.285$ & 
\hspace*{-1.5em}\footnotesize $t=0.31$& 
\hspace*{-1.5em}\footnotesize $t=0.335$& 
\hspace*{-1.5em}\footnotesize $t=0.36$\\[-.2em]
\hspace*{-1.5em}\includegraphics[width=.15\textwidth]{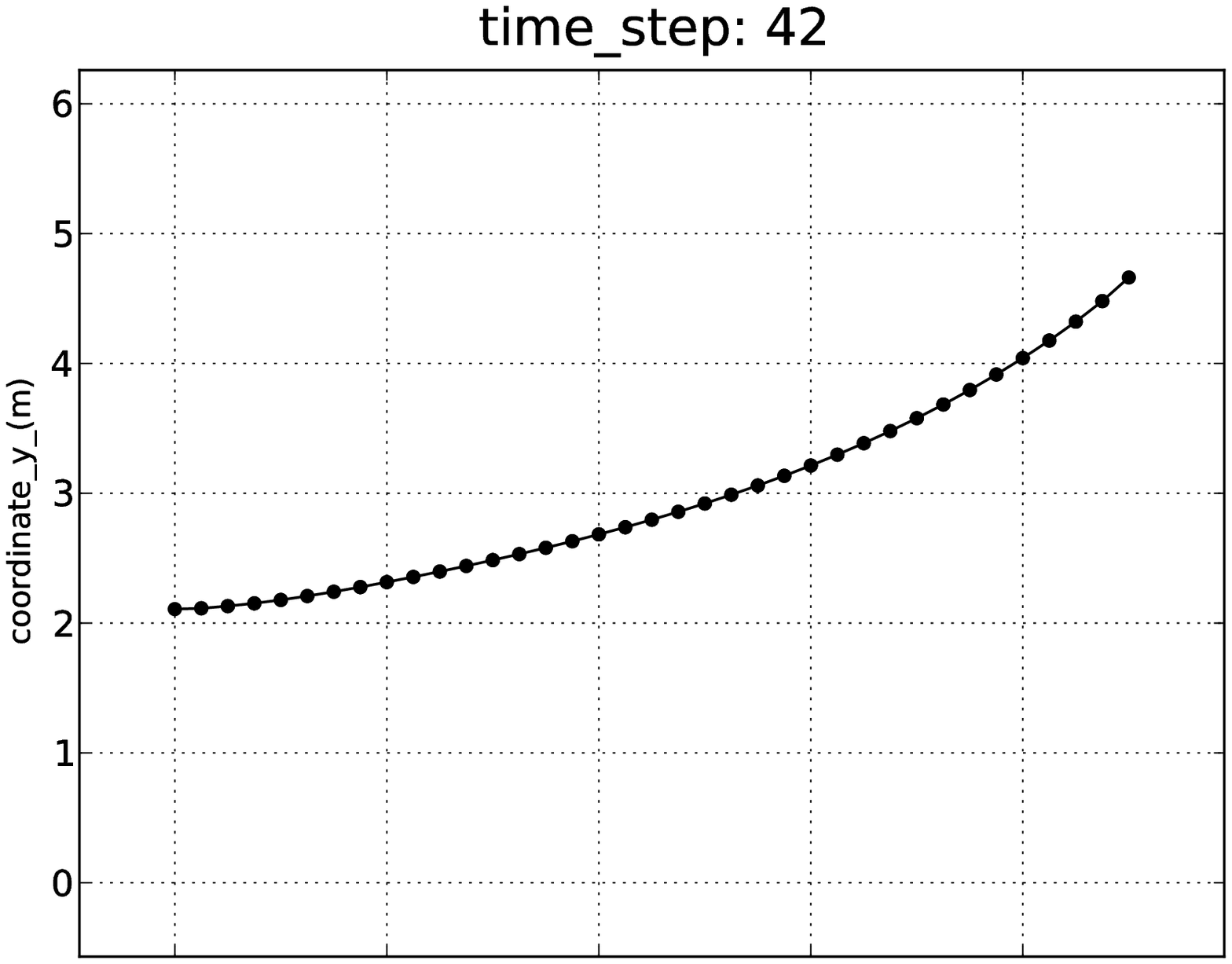}&
\hspace*{-1.5em}\vspace*{-0em}{\includegraphics[width=.15\textwidth]{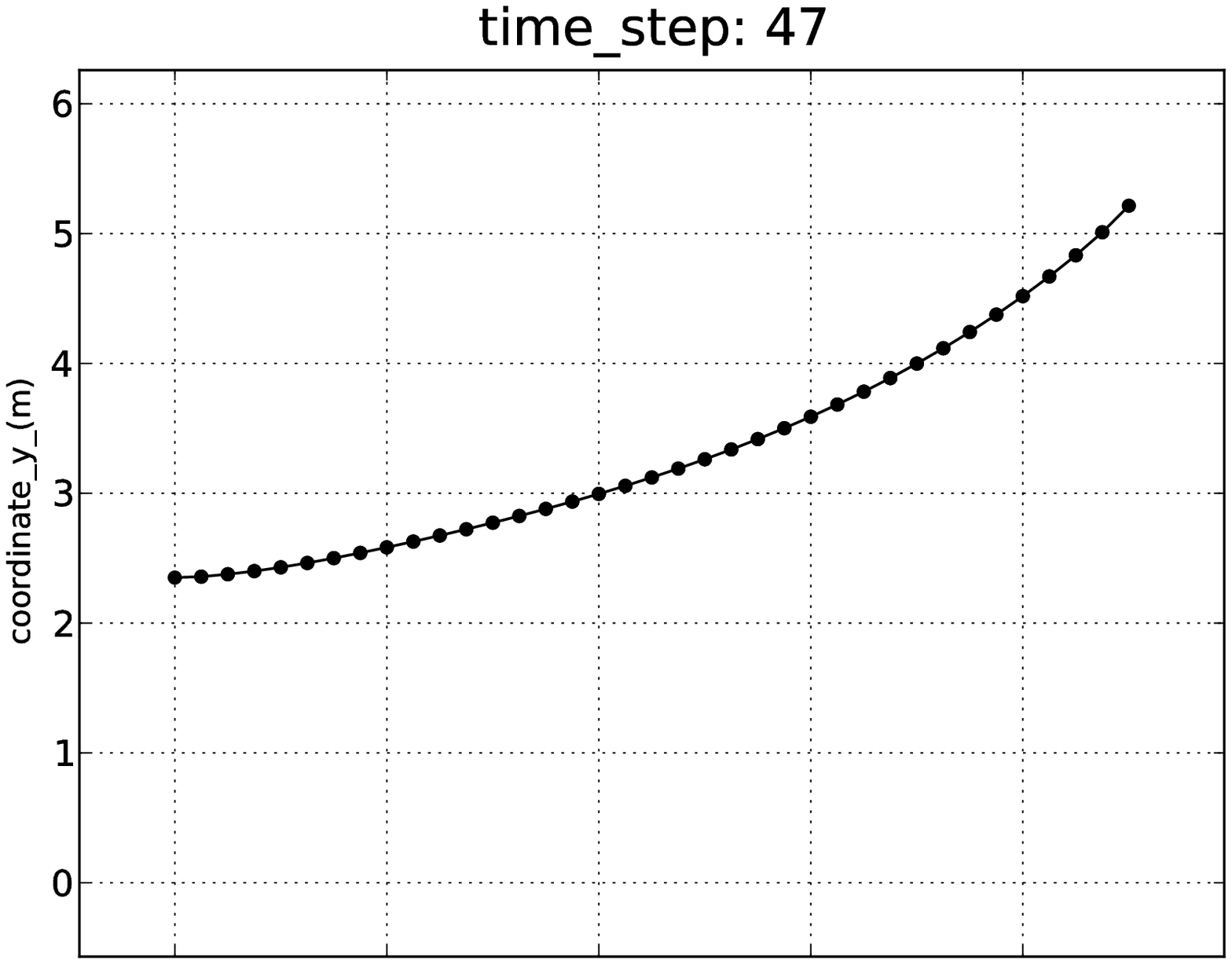}}&
\hspace*{-1.5em}\vspace*{-0em}{\includegraphics[width=.15\textwidth]{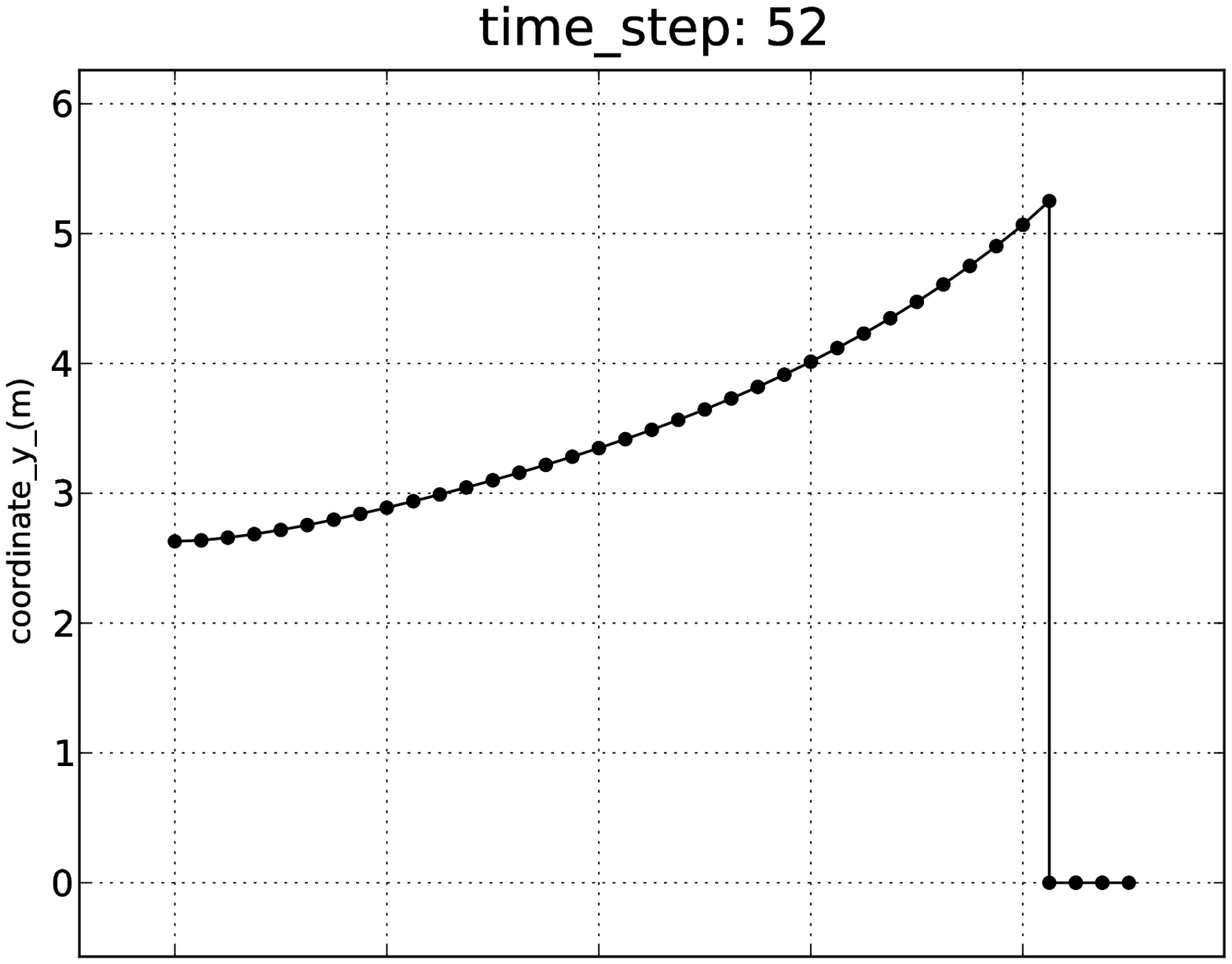}}&
\hspace*{-1.5em}\vspace*{-0em}{\includegraphics[width=.15\textwidth]{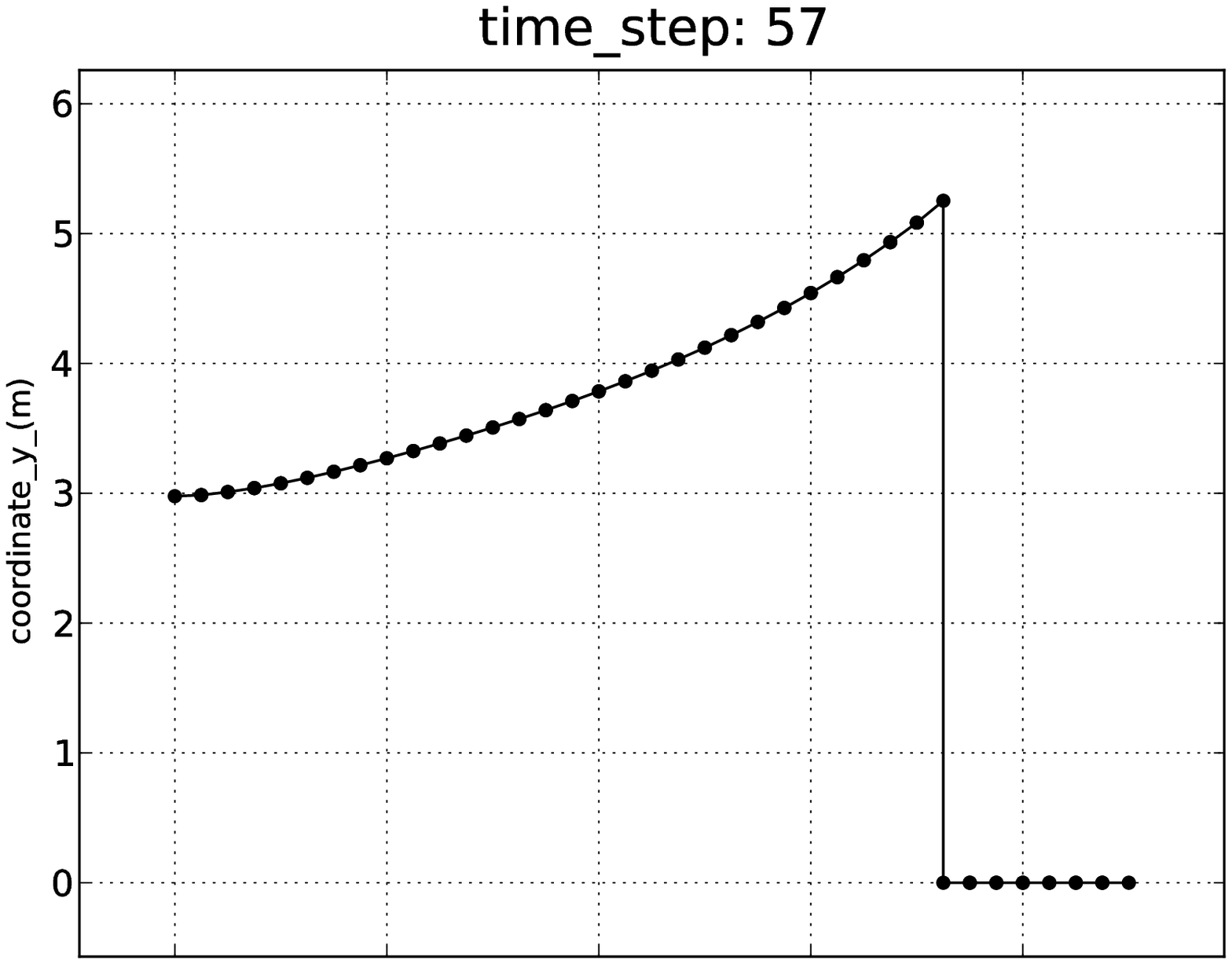}}&
\hspace*{-1.5em}\vspace*{-0em}{\includegraphics[width=.15\textwidth]{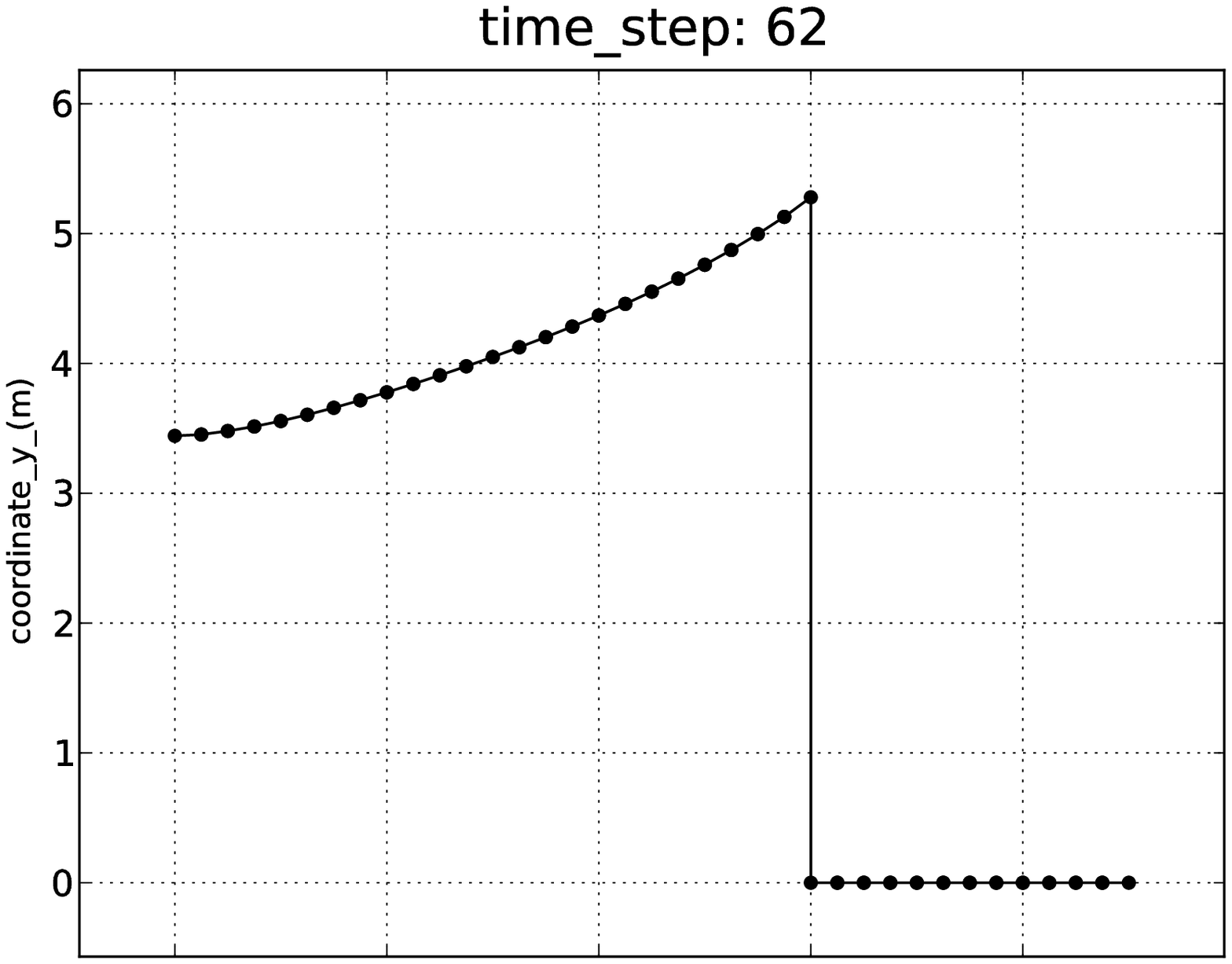}}&
\hspace*{-1.5em}\vspace*{-0em}{\includegraphics[width=.15\textwidth]{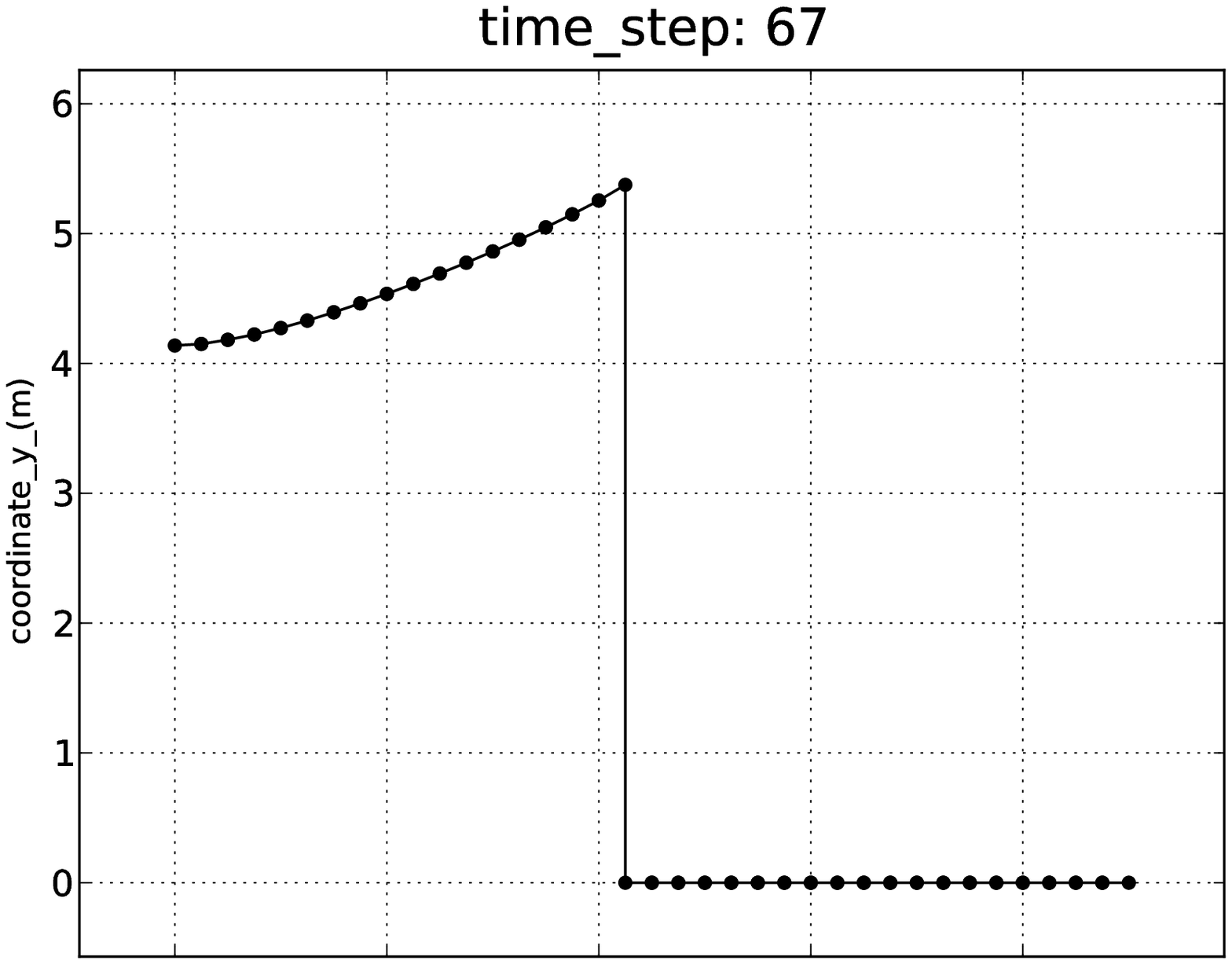}}&
\hspace*{-1.5em}\vspace*{-0em}{\includegraphics[width=.15\textwidth]{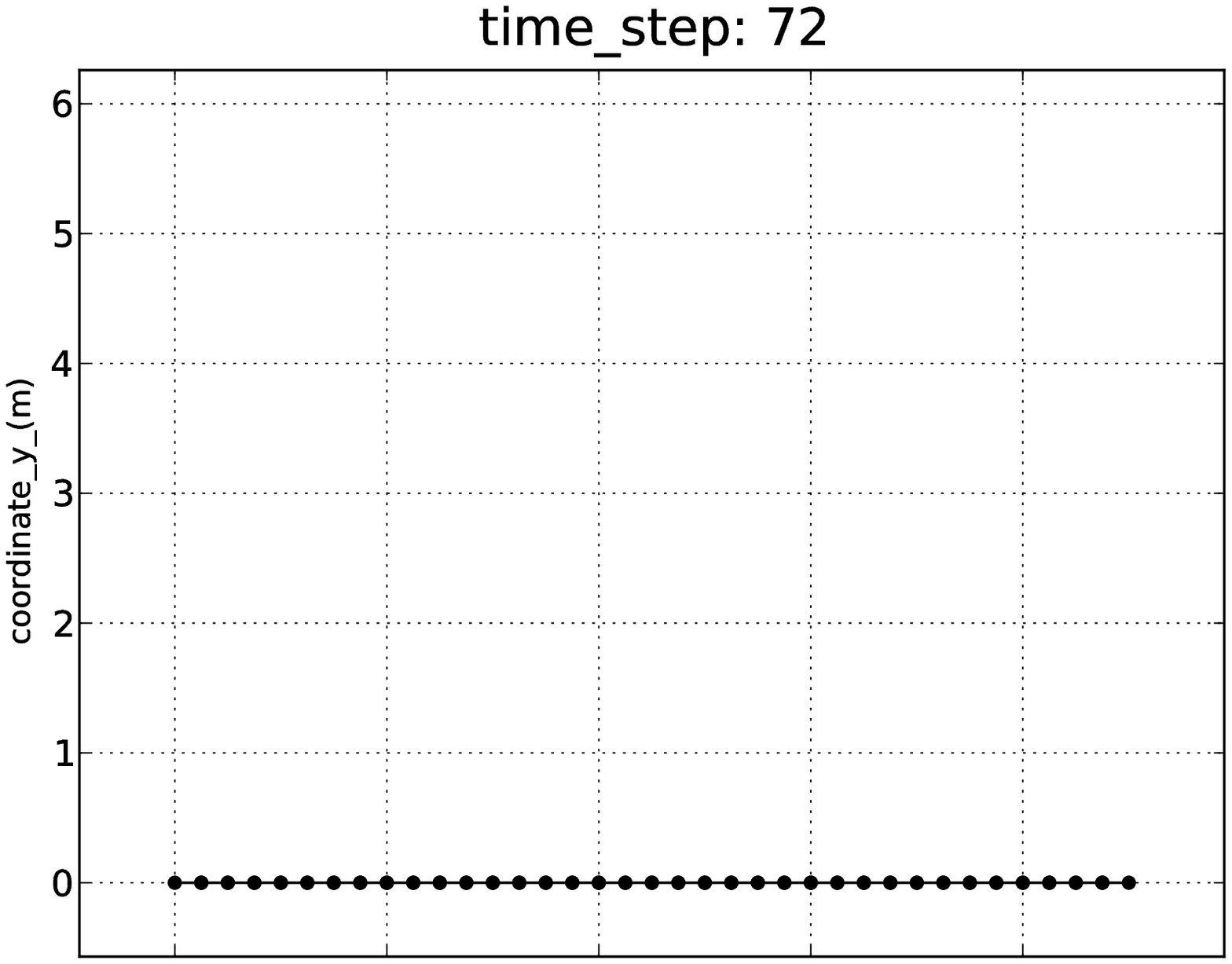}}
\\[-0em]
\hspace*{-1.5em}\includegraphics[width=.15\textwidth]{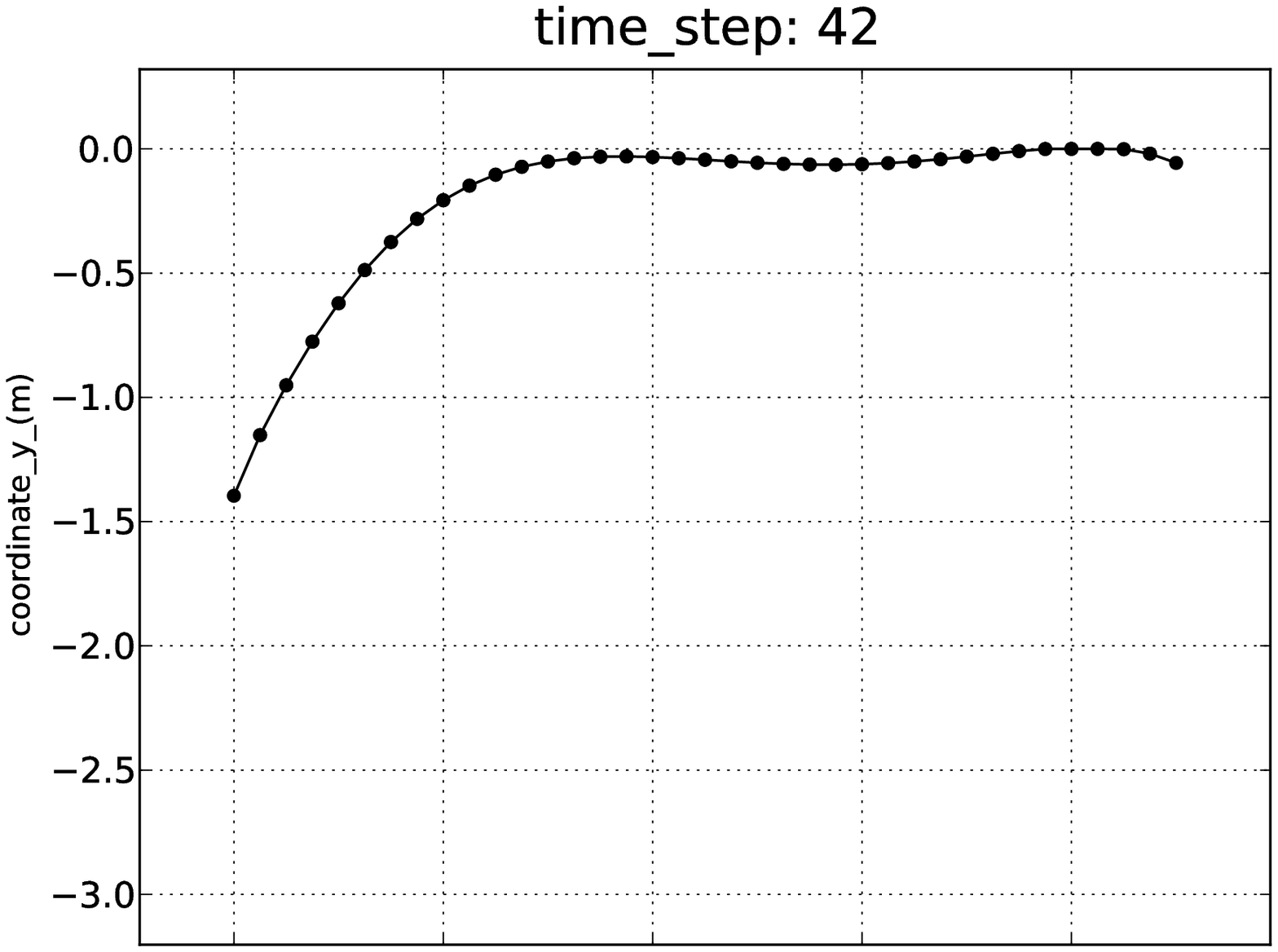}&
\hspace*{-1.5em}\vspace*{-0em}{\includegraphics[width=.15\textwidth]{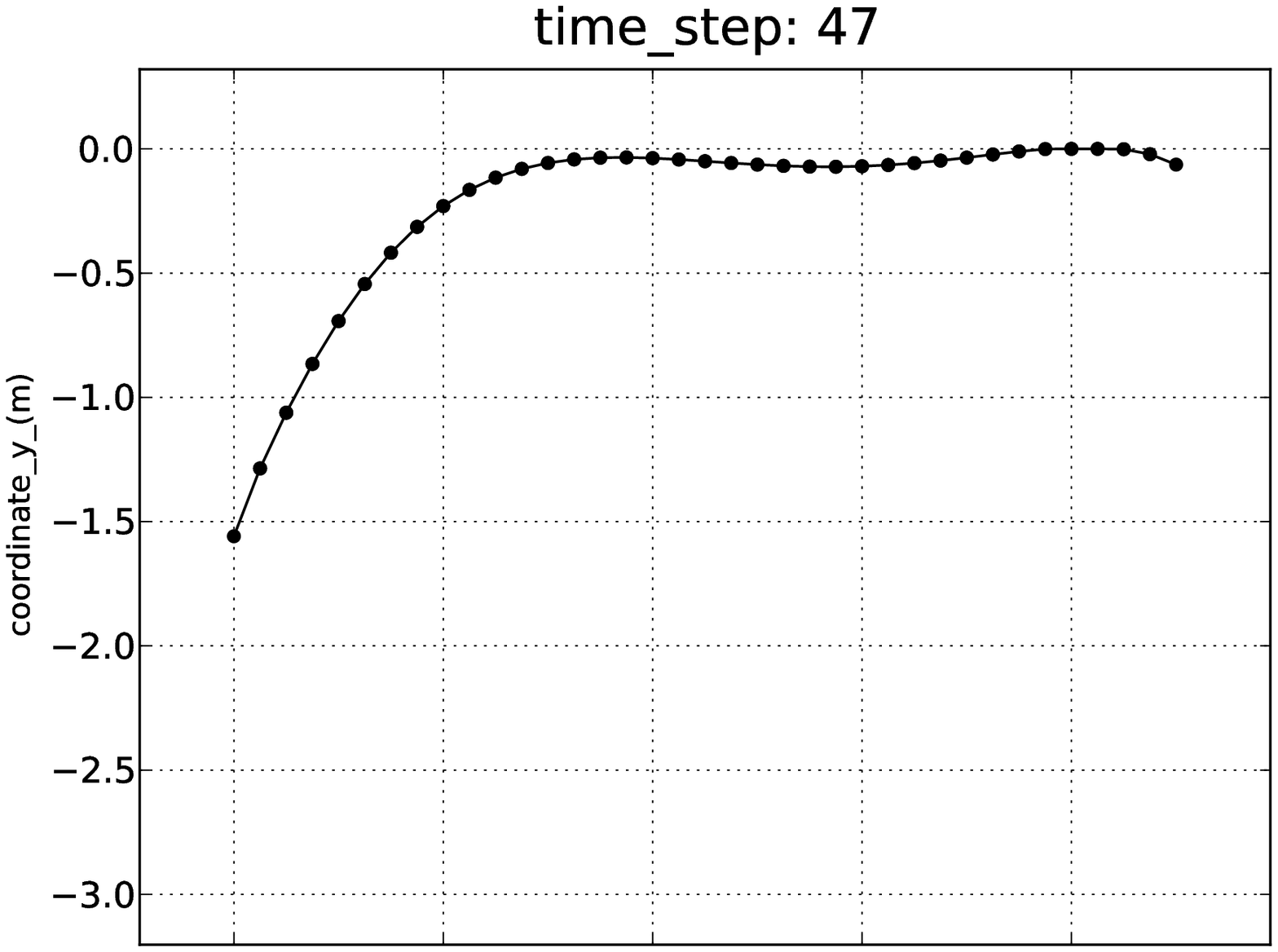}}&
\hspace*{-1.5em}\vspace*{-0em}{\includegraphics[width=.15\textwidth]{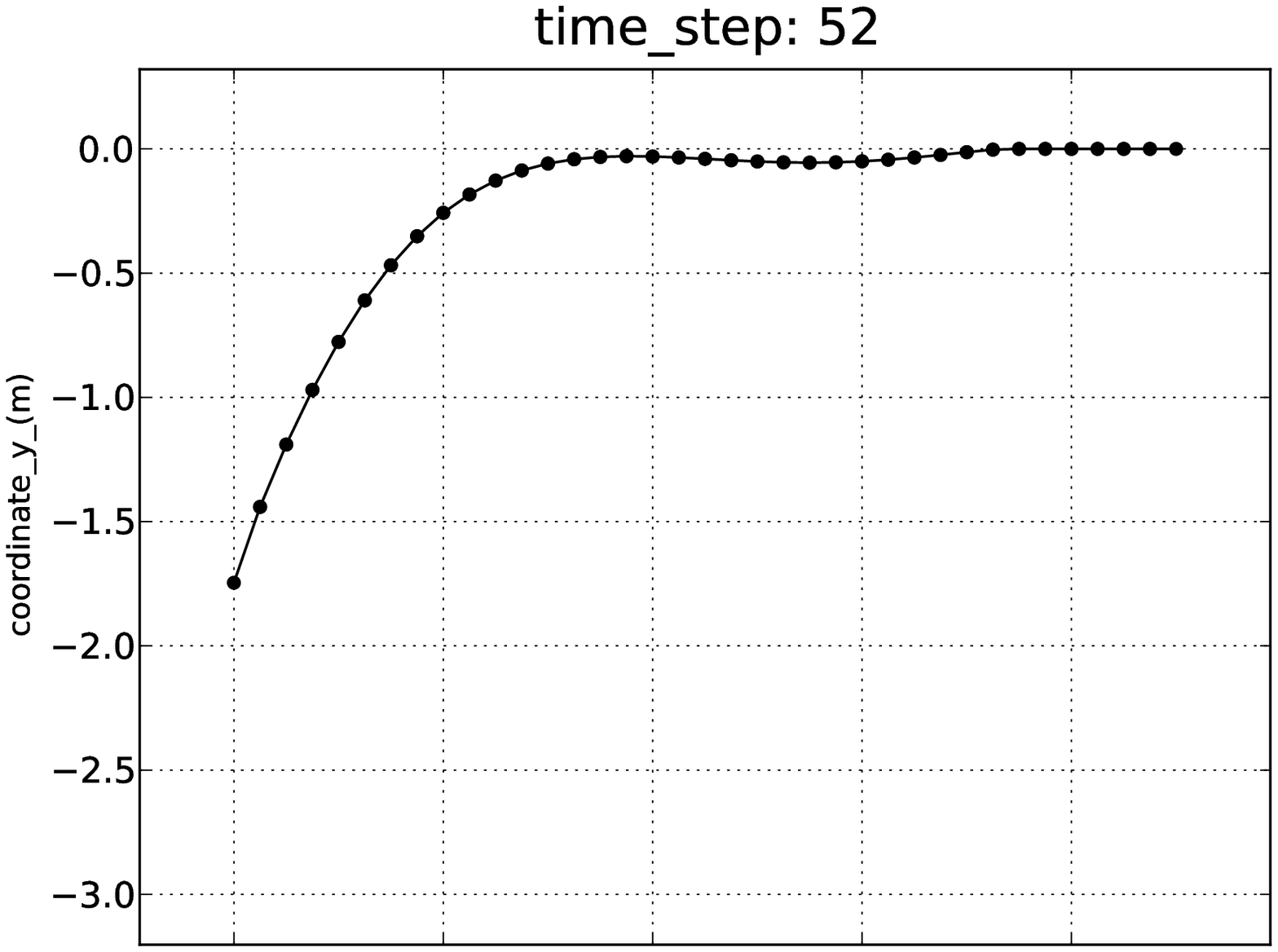}}&
\hspace*{-1.5em}\vspace*{-0em}{\includegraphics[width=.15\textwidth]{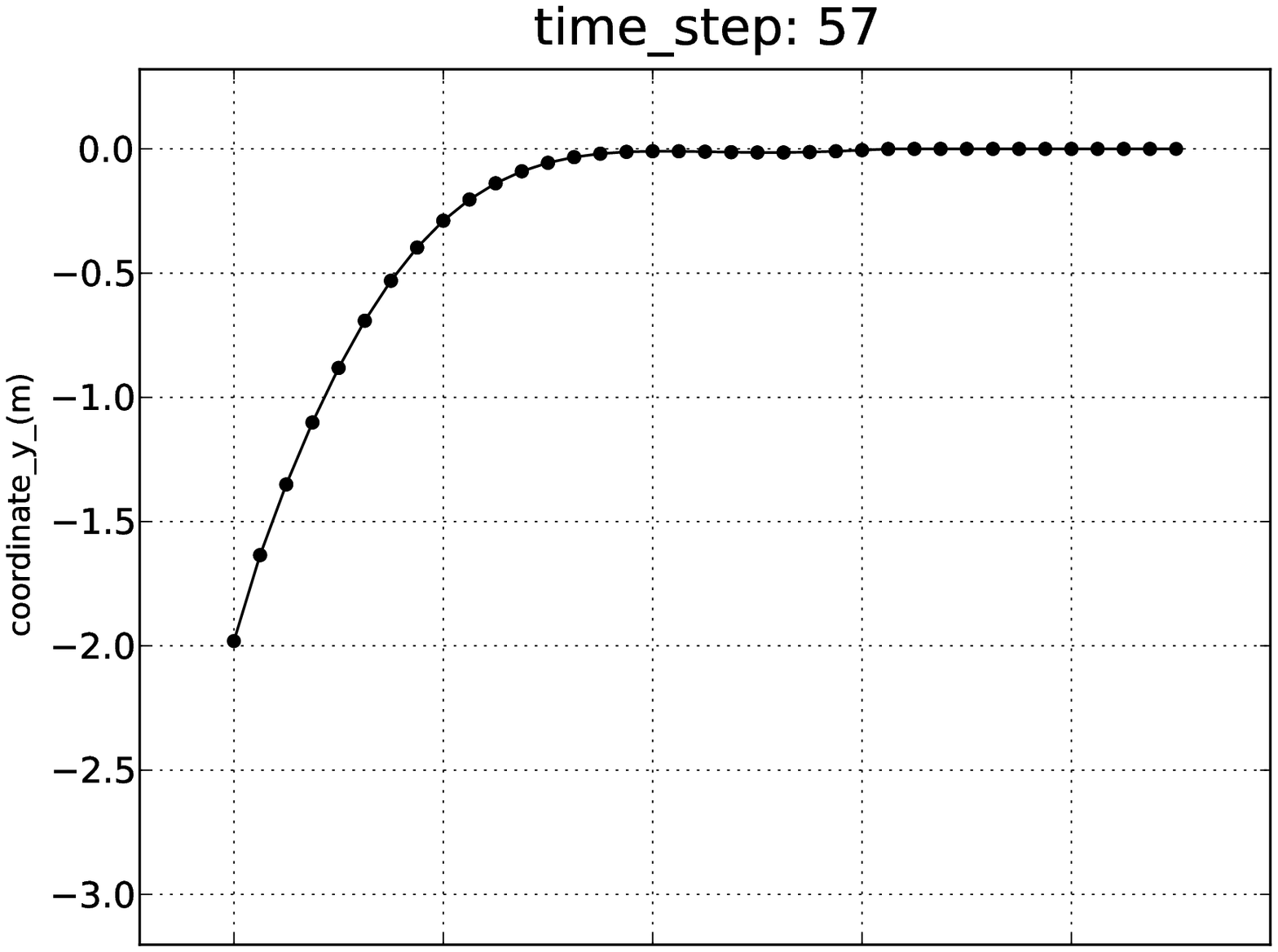}}&
\hspace*{-1.5em}\vspace*{-0em}{\includegraphics[width=.15\textwidth]{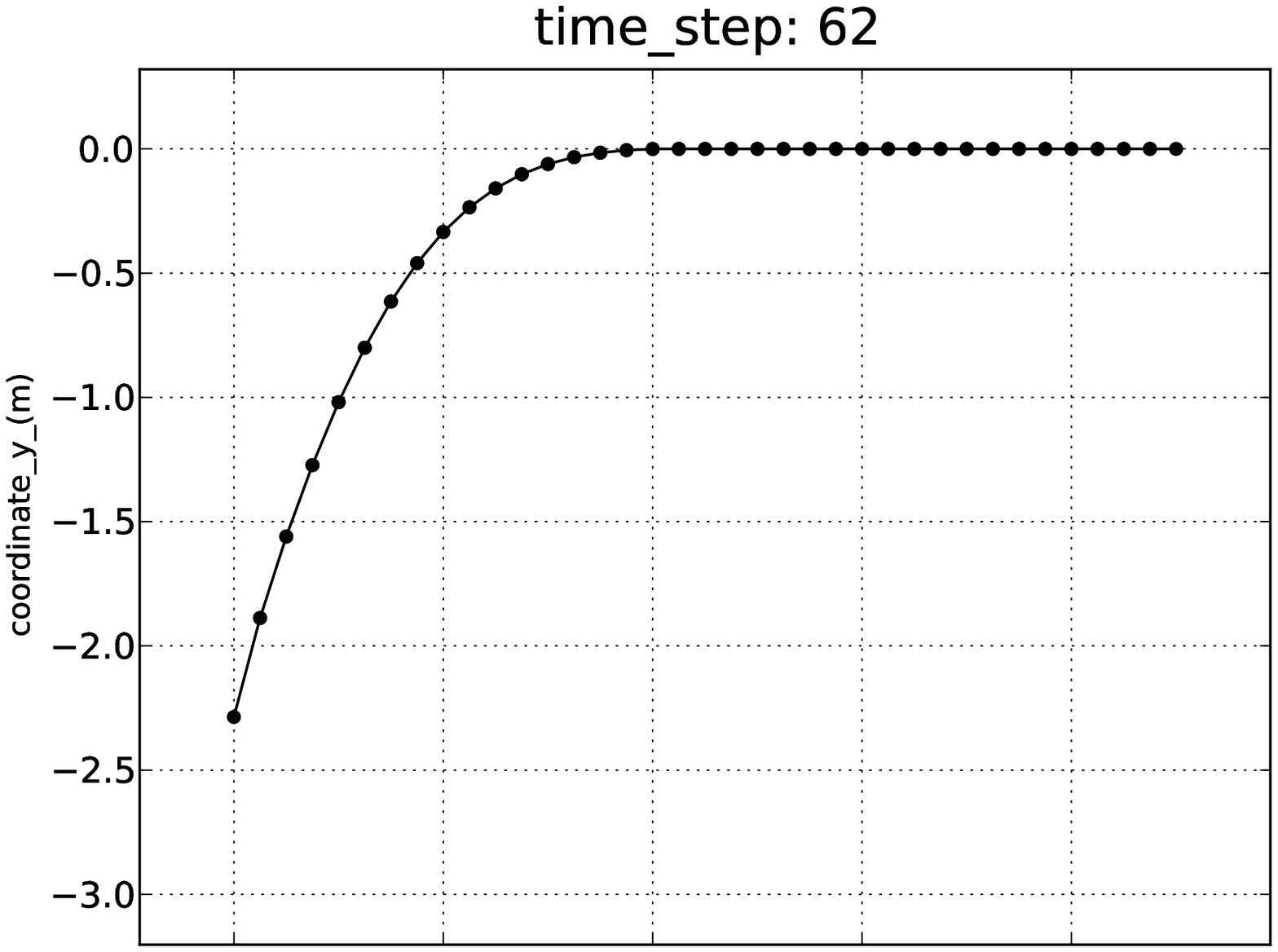}}&
\hspace*{-1.5em}\vspace*{-0em}{\includegraphics[width=.15\textwidth]{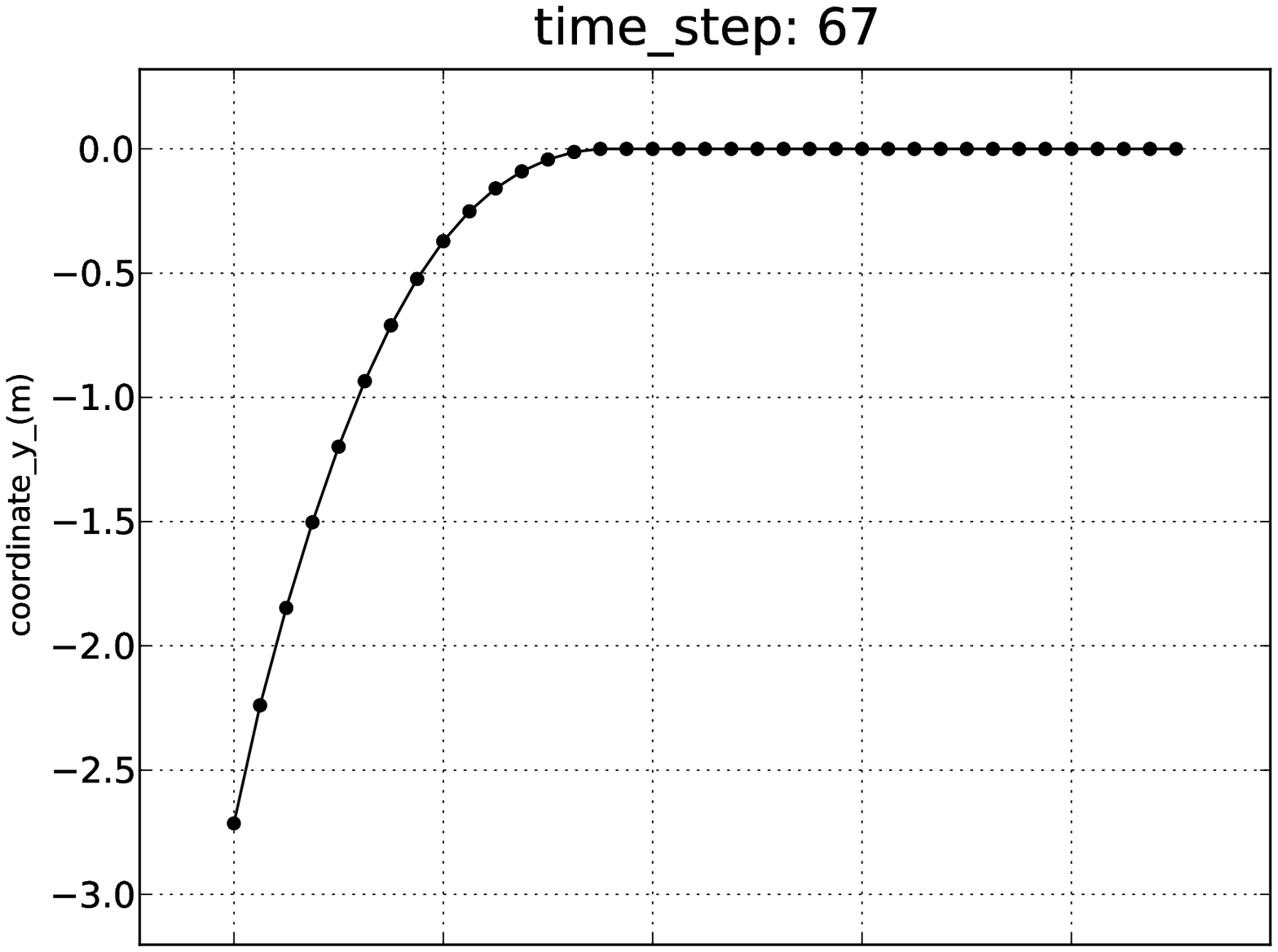}}&
\hspace*{-1.5em}\vspace*{-0em}{\includegraphics[width=.15\textwidth]{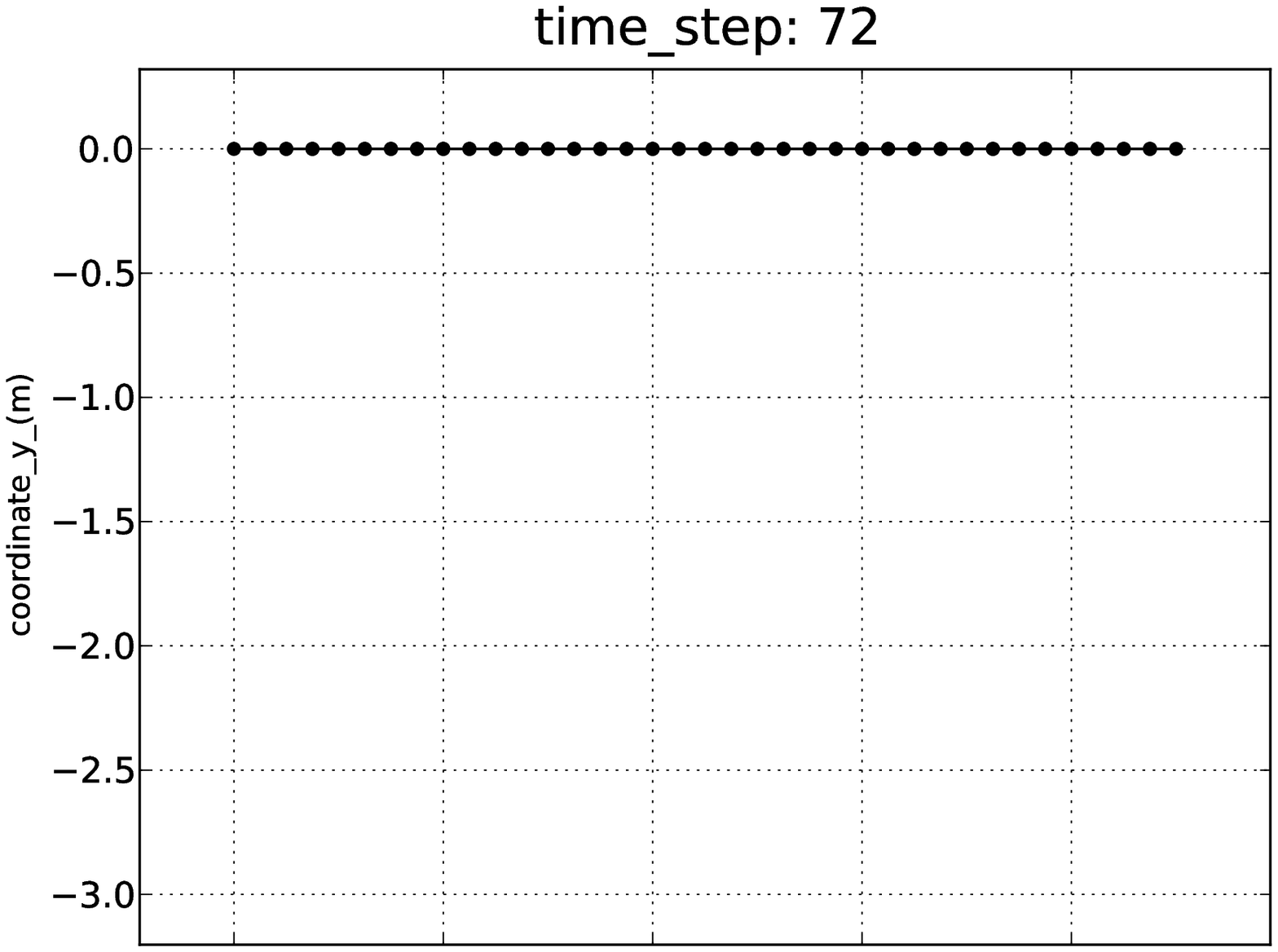}}
\\[.2em]
\hspace*{-1.5em}\vspace*{-0em}{\includegraphics[width=.13\textwidth,height=.07\textwidth]{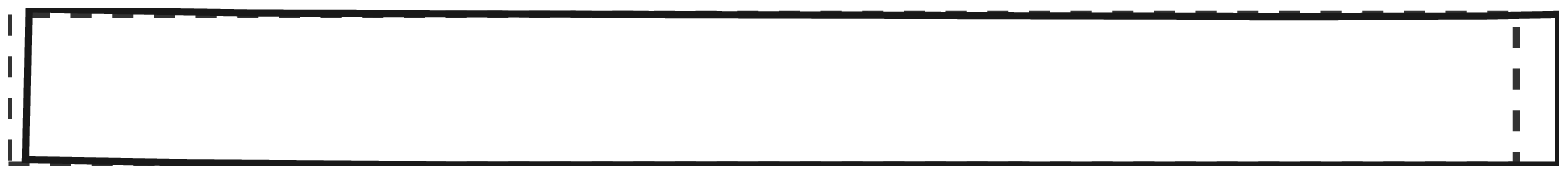}}&
\hspace*{-1.3em}\vspace*{-0em}{\includegraphics[width=.13\textwidth,height=.07\textwidth]{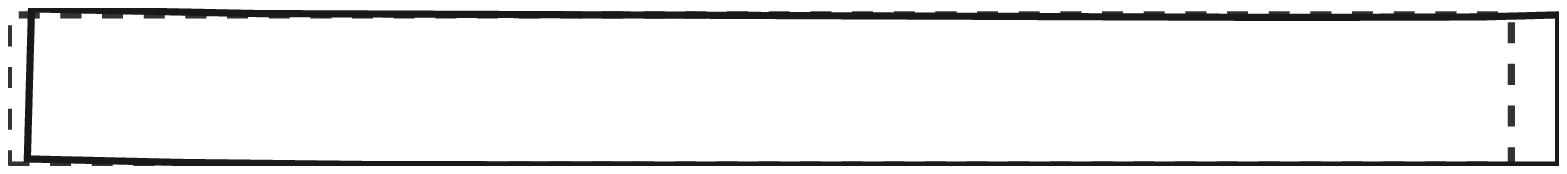}}&
\hspace*{-1.3em}\vspace*{-0em}{\includegraphics[width=.13\textwidth,height=.07\textwidth]{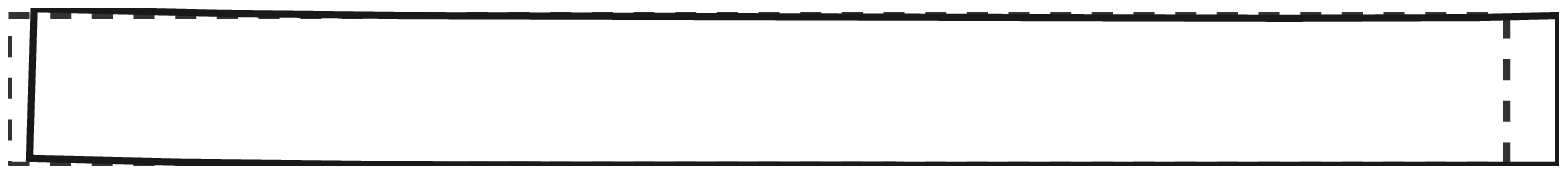}}&
\hspace*{-1.3em}\vspace*{-0em}{\includegraphics[width=.13\textwidth,height=.07\textwidth]{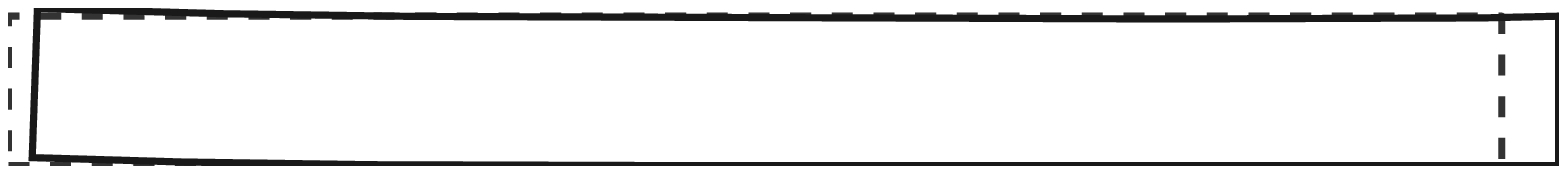}}&
\hspace*{-1.3em}\vspace*{-0em}{\includegraphics[width=.13\textwidth,height=.07\textwidth]{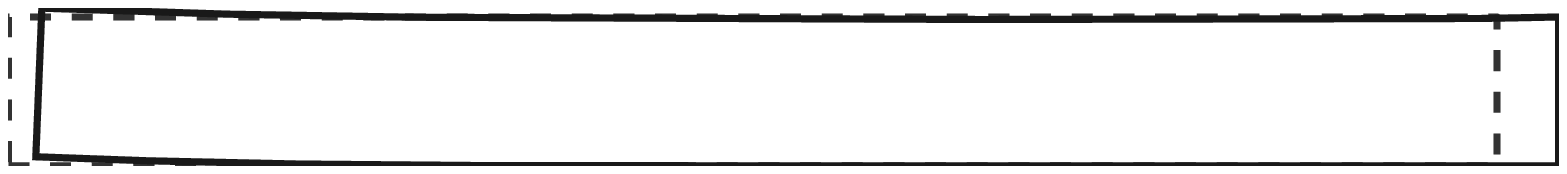}}&
\hspace*{-1.3em}\vspace*{-0em}{\includegraphics[width=.13\textwidth,height=.07\textwidth]{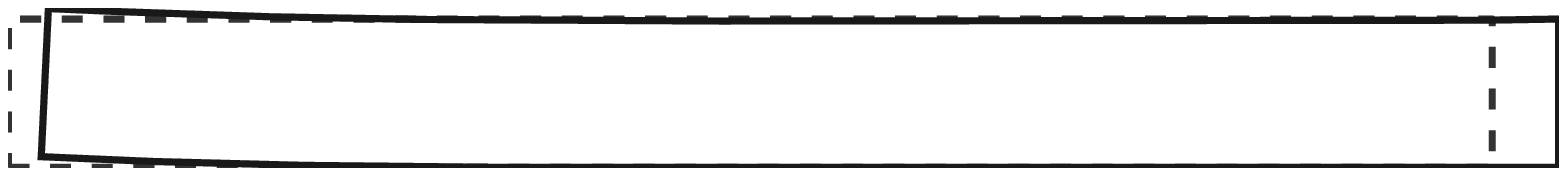}}&
\hspace*{-1.3em}\vspace*{-0em}{\includegraphics[width=.13\textwidth,height=.07\textwidth]{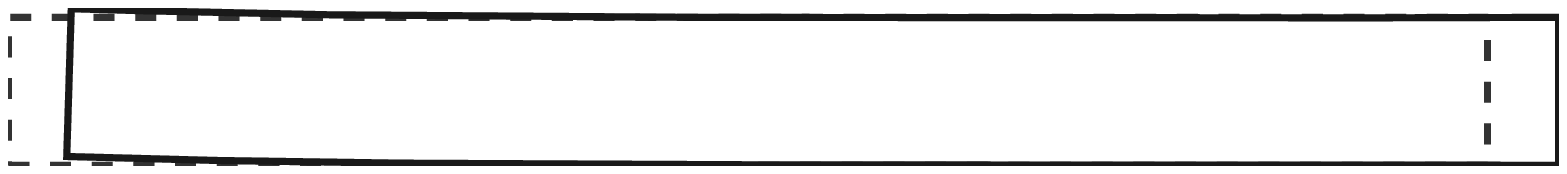}}
\end{tabular}
\end{my-picture}
\\[6em]
\begin{center}
{\small Fig.\,\ref{fig-2D-I-movie}.\ }
\begin{minipage}[t]{.9\textwidth}\baselineskip=8pt
{\small 
Upper row: \,distribution of the tangential traction force in the adhesive
along $\GC$.\\
Middle row: distribution of the normal traction force in the adhesive
along $\GC$.\\
Lower row: \,deformed configuration of gradually delaminating specimen under 
loading (1st experiment) from Fig.\,\ref{fig_m1}; the displacement depicted 
magnified 100$\,\times$ horizontally and 500$\,\times$ vertically
to make the vertical deformation more visible}.
\end{minipage}
\end{center}

\noindent
To present spatial distribution of the defect measure, one must reconstruct 
the strain inside the domain $\Omega$. This is a delicate (but anyhow 
executable) issue in BEM. To visualize the rate of viscous dissipation (which 
approximates the defect measure $\mu$, cf.~\eqref{ch6:delam-slow-conv-c}, and 
may exhibit time oscillations which would make visualization difficult), we 
display rather the overall dissipation on the interval $[0,t]$, cf.\ 
Fig.\,\ref{fig-2D-I-movie-2}, which approximates the total variation of the 
defect measure $\int_0^t[\mu(\cdot,x)](\d t)$ as a function of $x$: 

\begin{my-picture}{.7}{.55}{fig-2D-I-movie-2}
\psfrag{energy density rate, step: 6}{}
\psfrag{energy density rate, step: 9}{}
\psfrag{energy density rate, step: 12}{}
\psfrag{energy density rate, step: 15}{}
\psfrag{energy density rate, step: 18}{}
\psfrag{energy density rate, step: 21}{}
\begin{tabular}{lc} 
\hspace*{3em}\LARGE $^{^{^{^{\mbox{\footnotesize $t$=0.21}}}}}$  & 
\hspace*{1em}{\includegraphics[clip=true,width=.6\textwidth]{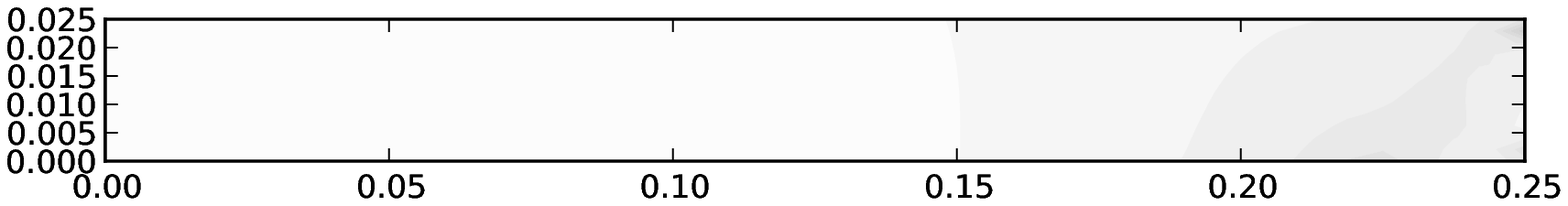}} 
  \\[-1.25em]
\hspace*{3em}\LARGE $^{^{^{^{\mbox{\footnotesize $t$=0.235}}}}}$ &
\hspace*{1em}{\includegraphics[clip=true,width=.6\textwidth]{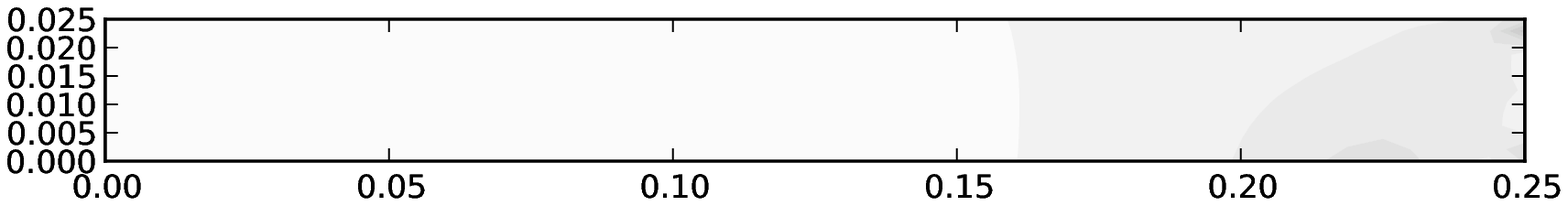}} 
\\[-1.25em]
\hspace*{3em}\LARGE $^{^{^{^{\mbox{\footnotesize $t$=0.26}}}}}$ &
\hspace*{1em}{\includegraphics[clip=true,width=.6\textwidth]{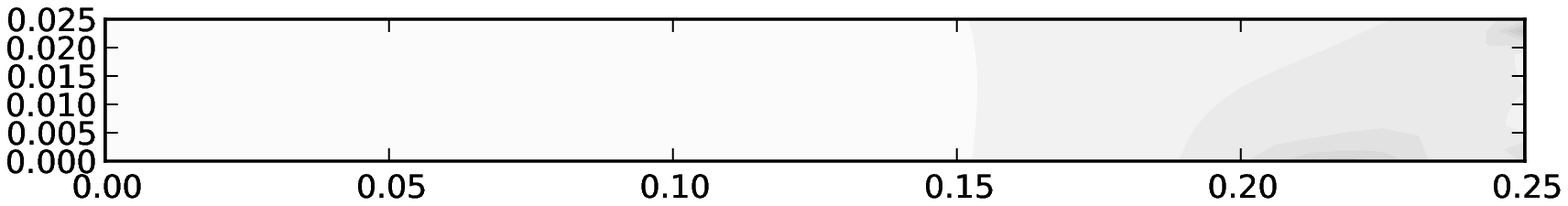}} 
\\[-1.25em]
\hspace*{3em}\LARGE $^{^{^{^{\mbox{\footnotesize $t$=0.285}}}}}$ &
\hspace*{1em}{\includegraphics[clip=true,width=.6\textwidth]{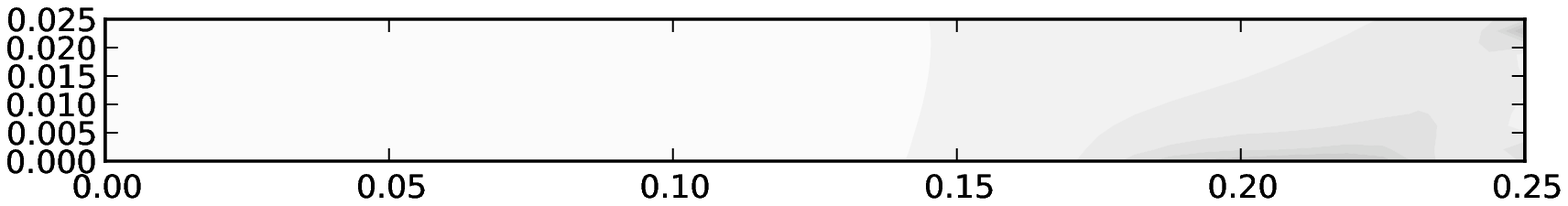}} 
\\[-1.25em]
\hspace*{3em}\LARGE $^{^{^{^{\mbox{\footnotesize $t$=0.31}}}}}$ &
\hspace*{1em}{\includegraphics[clip=true,width=.6\textwidth]{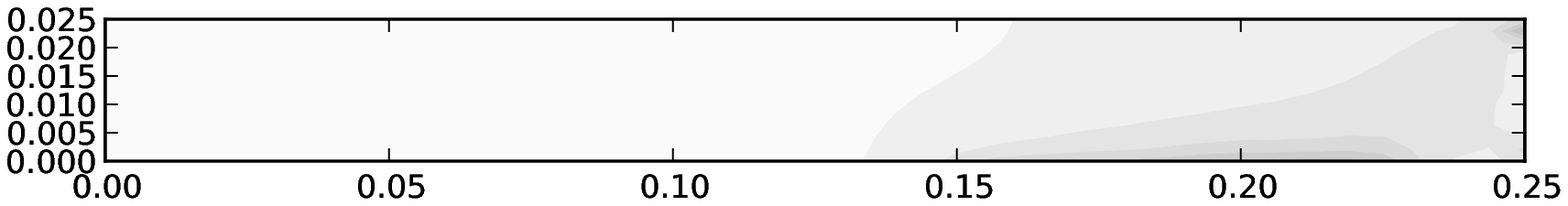}} 
\\[-1.25em]
\hspace*{3em}\LARGE $^{^{^{^{\mbox{\footnotesize $t$=0.335}}}}}$ &
\hspace*{1em}{\includegraphics[clip=true,width=.6\textwidth]{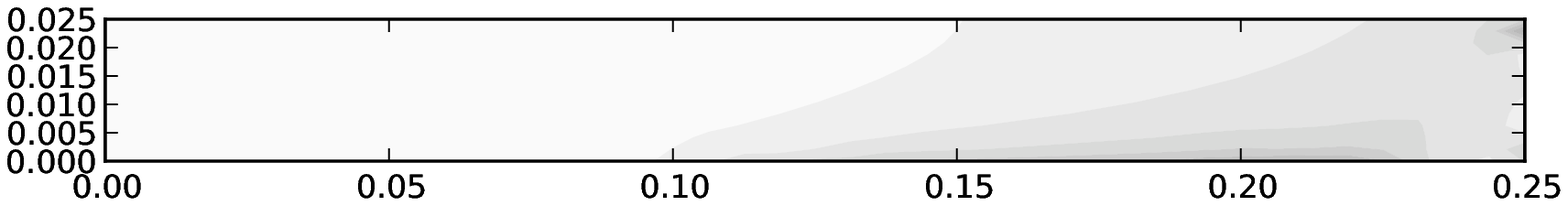}} 
\\[-1.25em]
\hspace*{3em}\LARGE $^{^{^{^{\mbox{\footnotesize $t$=0.36}}}}}$ &
\hspace*{1em}{\includegraphics[clip=true,width=.6\textwidth]{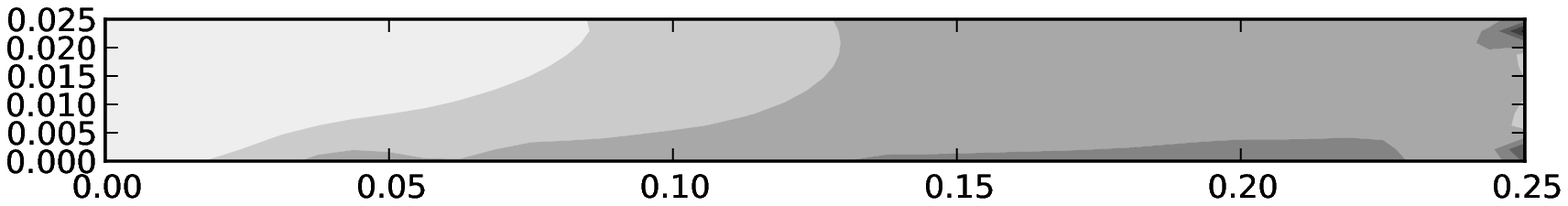}}
\\[-.5em]&\hspace*{0em}{\includegraphics[clip=true,width=.35\textwidth]{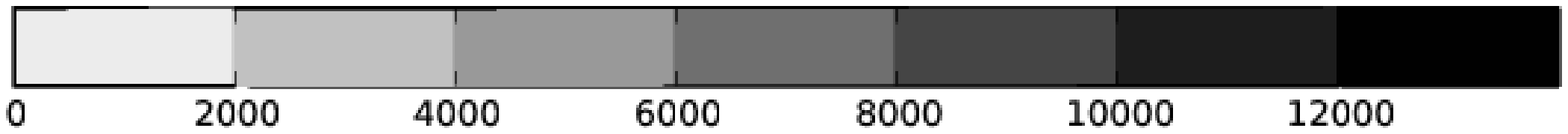}} 
\\[0em]
\end{tabular}
\end{my-picture}
\\[.0em]
\nopagebreak
\begin{center}
{\small Fig.\,\ref{fig-2D-I-movie-2}.\ }
\begin{minipage}[t]{.8\textwidth}\baselineskip=8pt
{\small The spatial distribution of the energy dissipated by (even very small) 
viscosity over the time interval $[0,t]$, i.e.\ 
$\int_0^t\chi\bbC e(\DT u_{\chi,\tau}){:}e(\DT u_{\chi,\tau})\,\d t$ 
depicted in a gray scale at 6 selected time instances as also used on 
Fig.\,\ref{fig-2D-I-movie}.}
\end{minipage}
\end{center}

\noindent
It is not surprising that the dissipated viscous energy is bigger in the 
right-hand part of the specimen which is particularly stretched during the 
delamination. Perhaps noteworthy phenomenon is that this energy is not 
localized along the delaminating surface; we saw this effect already in 
the example in Section~\ref{sec-0D} where it was distributed over the whole 
volume uniformly.

An analog of Figure~\ref{fig:1-D-stress-strain}(right) displaying the force 
response $t\mapsto\int_{\GDir}\!\mathfrak{t}(\epsilon(u,\DT u))(t,x)\,\d S$ is 
on Figure~\ref{fig-2D-engr}(left) together with a comparison with results 
obtained by the simplified inviscid algorithm from Remark~\ref{rem-inviscid}:

\begin{my-picture}{.9}{.25}{fig-2D-engr}
\psfrag{time}{\footnotesize\hspace*{3.8em}time\hspace{2em}$t$}
\psfrag{resultant_force}{\footnotesize\hspace{1em}reaction force}
\psfrag{vertical}{\tiny\hspace{-.6em}vertical}
\psfrag{horizontal}{\tiny\hspace{-.6em}horizon.}
\hspace*{3em}\vspace*{-.1em}{\includegraphics[width=.4\textwidth,height=.25\textwidth]{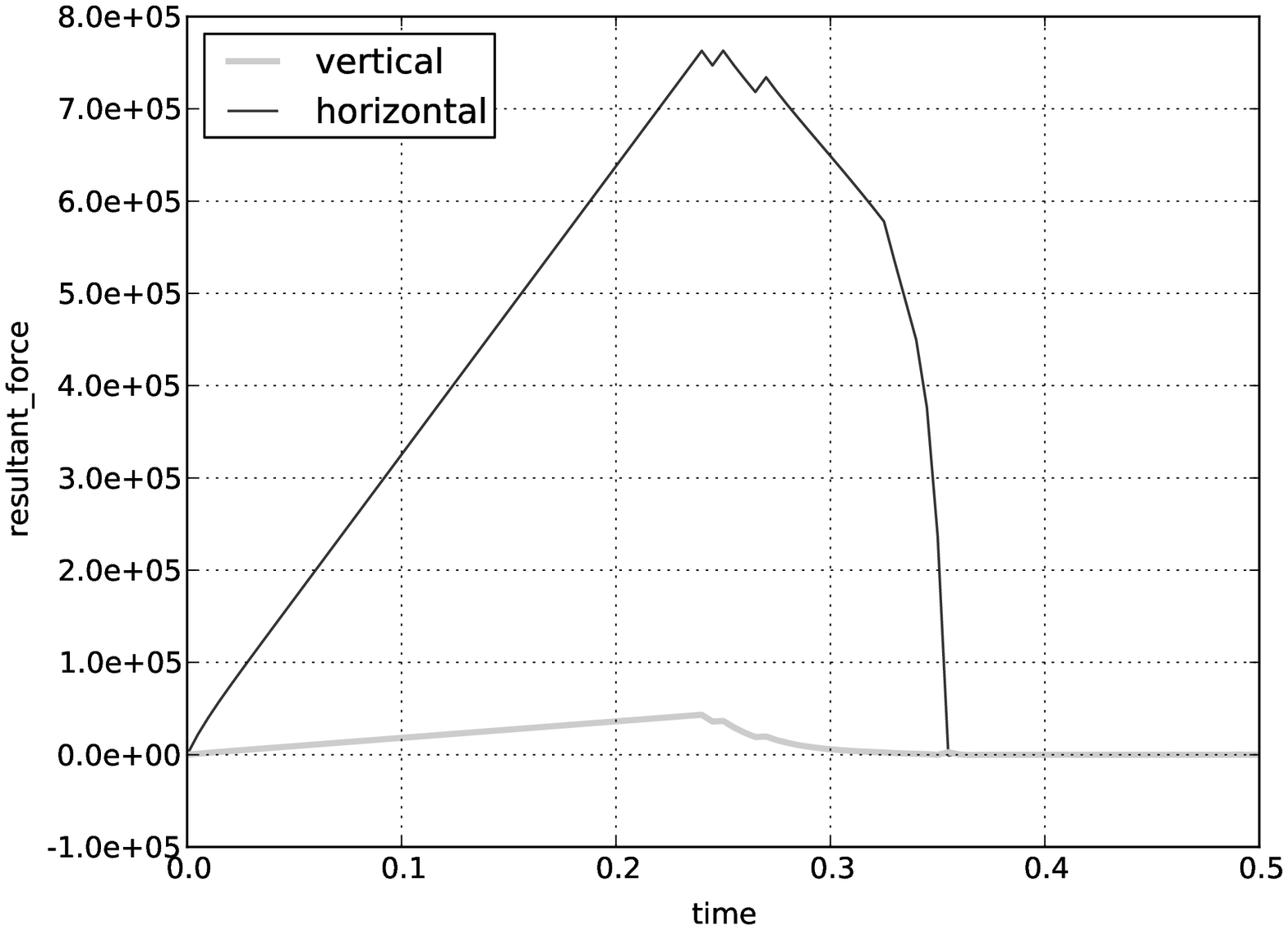}}
\hspace*{3em}\vspace*{-.1em}{\includegraphics[width=.4\textwidth,height=.25\textwidth]{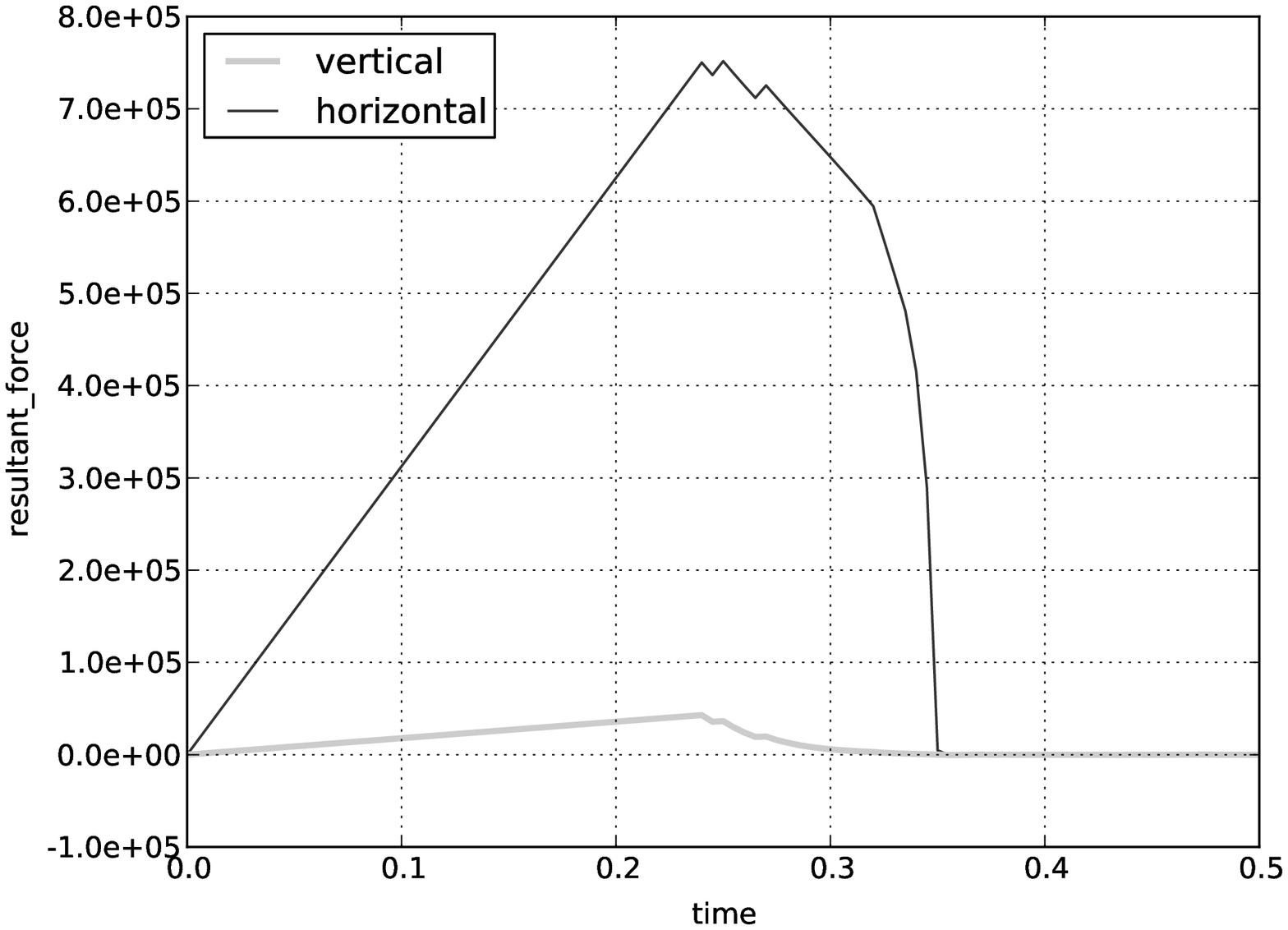}}
\end{my-picture}
\\[-2em]
\nopagebreak
\begin{center}
{\small Fig.\,\ref{fig-2D-engr}.\ }
\begin{minipage}[t]{.9\textwidth}\baselineskip=8pt
{\small
Vertical and horizontal components of the reaction force on the Dirichlet 
loading for small viscosity $\chi=0.01$s and energy well preserved (left), 
compared with the inviscid solution calculated by semi-implicit method but 
energy balance completely violated as in Remark~\ref{rem-inviscid} (right). 
As on Fig.~\ref{fig:1-D-violation}, a surprisingly good match of this force 
response can be observed.
}
\end{minipage}
\end{center}

\subsubsection{Vertical-loading experiment}

Eventually, we briefly present most of the responses from 
Section~\ref{sect-2D-1st-exp} for another loading as indicated on 
Figure~\ref{fig_m1}. Of course, the response is considerably
different in some aspects, although the phenomena commented already for the 
horizontal loading are again observed. Now, the delamination propagates more 
slowly and we depict it with an equidistant step $0.45\,$s (instead of 
$0.025\,$s used in the 1st loading experiment) starting from $t=0.05\,$s:
 
\begin{my-picture}{.9}{.2}{fig-movie-1}
\psfrag{time_step: 10}{}
\psfrag{time_step: 58}{}
\psfrag{time_step: 106}{}
\psfrag{time_step: 154}{}
\psfrag{time_step: 202}{}
\psfrag{time_step: 250}{}
\psfrag{time_step: 298}{}
\psfrag{normal_traction_x_(m)}{\tiny \hspace{-2em}$_{x\mbox{-coordinate}}$}
\psfrag{tangent_traction_x_(m)}{\tiny \hspace{-2em}$_{x\mbox{-coordinate}}$}
\psfrag{coordinate_y_(m)}{\tiny }
\psfrag{4}{\small 4}
\begin{tabular}{ccccccc} 
\hspace*{-1.5em}\footnotesize $t=0.05$ & 
\hspace*{-1.5em}\footnotesize $t=0.5$ & 
\hspace*{-1.5em}\footnotesize $t=0.95$ & 
\hspace*{-1.5em}\footnotesize $t=1.4$& 
\hspace*{-1.5em}\footnotesize $t=1.85$& 
\hspace*{-1.5em}\footnotesize $t=2.3$& 
\hspace*{-1.5em}\footnotesize $t=2.75$\\[-0em]
\hspace*{-2.em}\includegraphics[width=.15\textwidth]{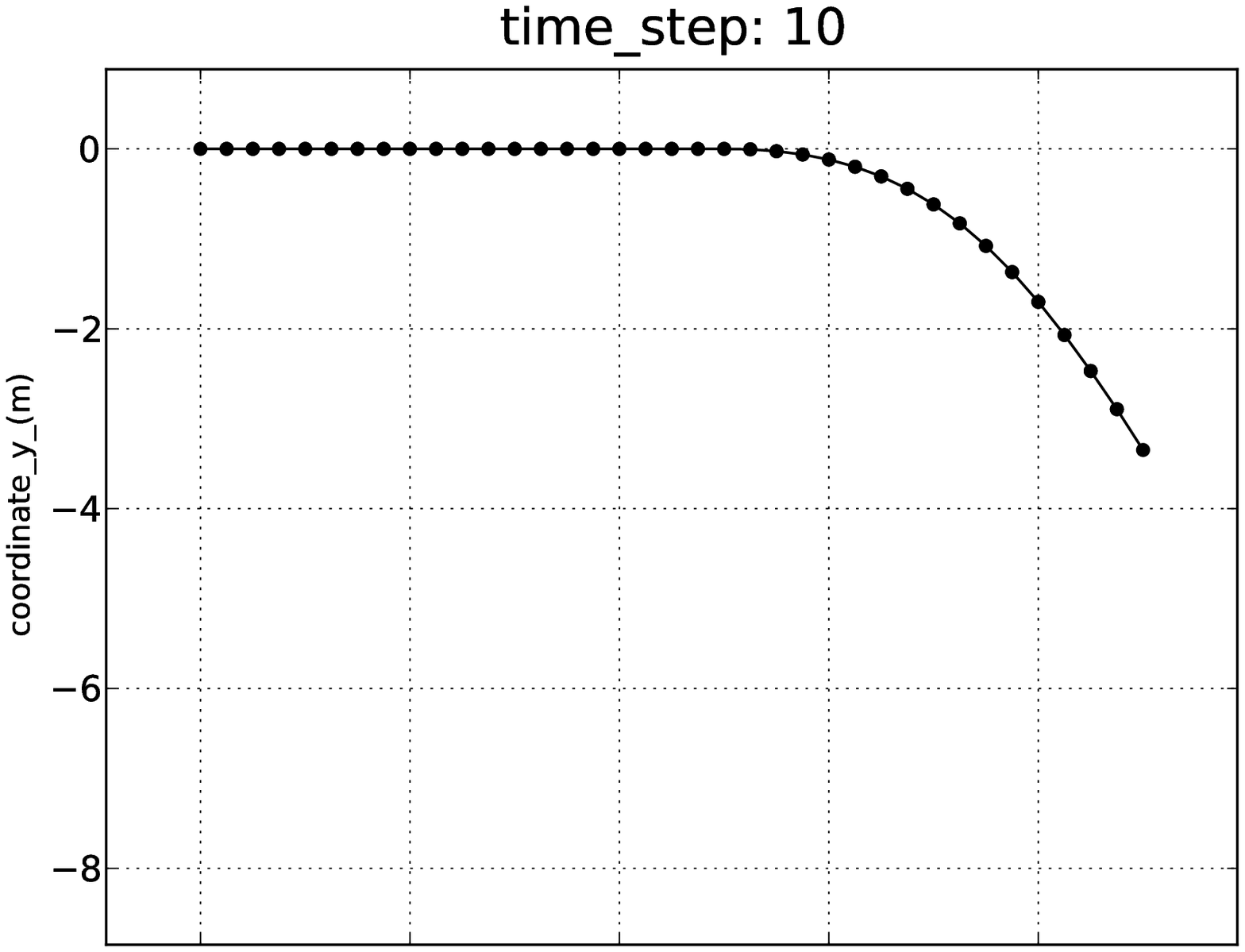}&
\hspace*{-1.5em}\vspace*{-0em}{\includegraphics[width=.15\textwidth]{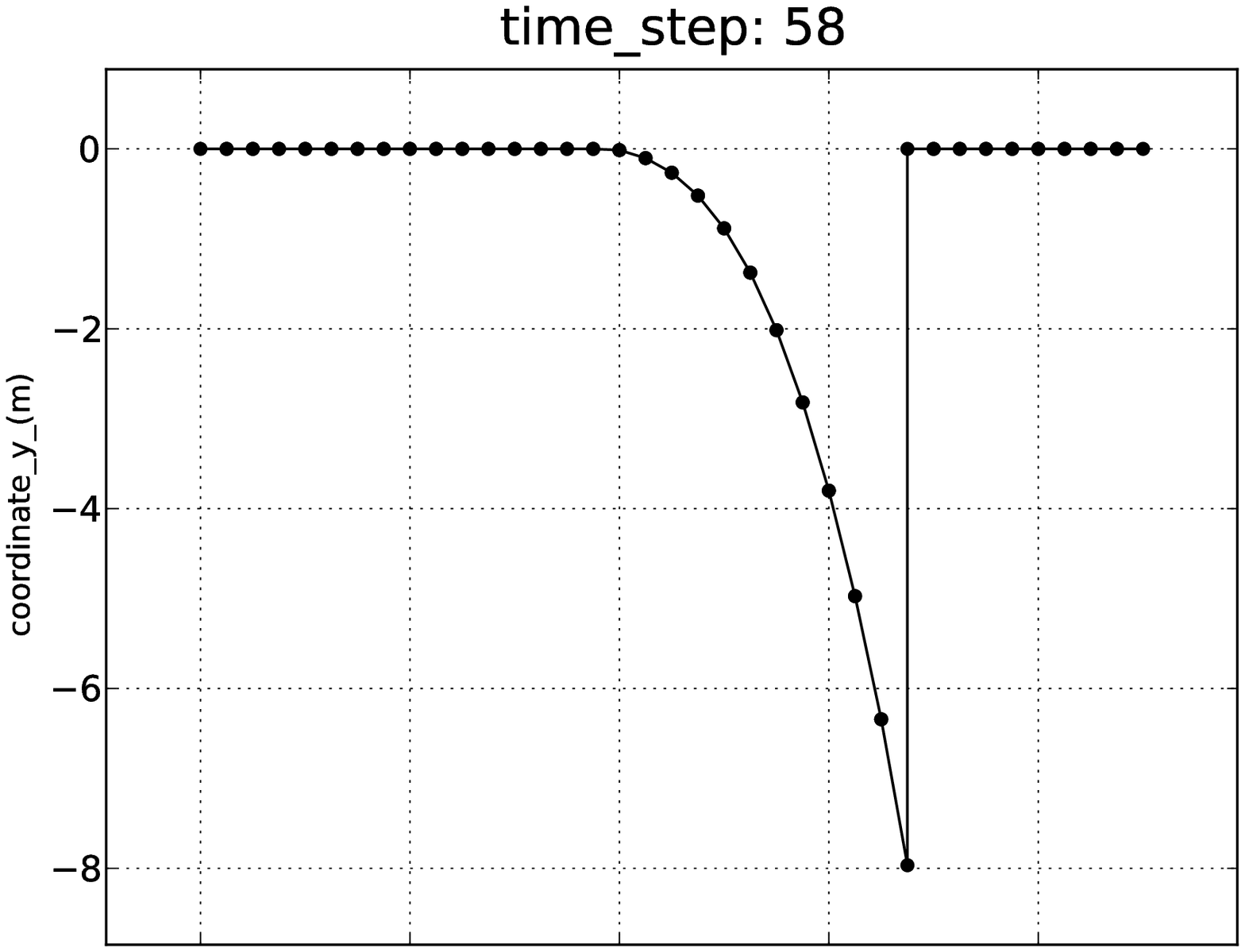}}&
\hspace*{-1.5em}\vspace*{-0em}{\includegraphics[width=.15\textwidth]{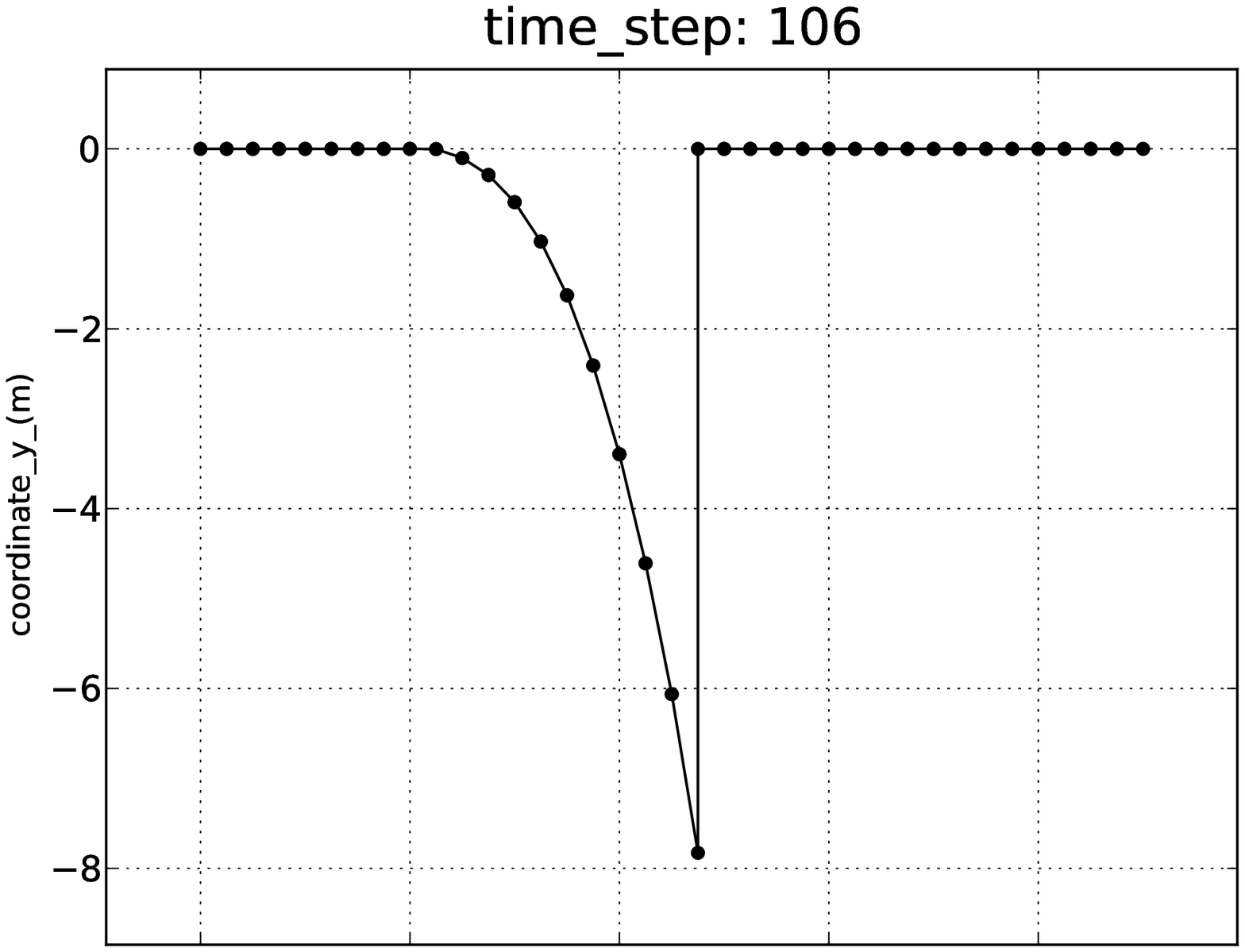}}&
\hspace*{-1.5em}\vspace*{-0em}{\includegraphics[width=.15\textwidth]{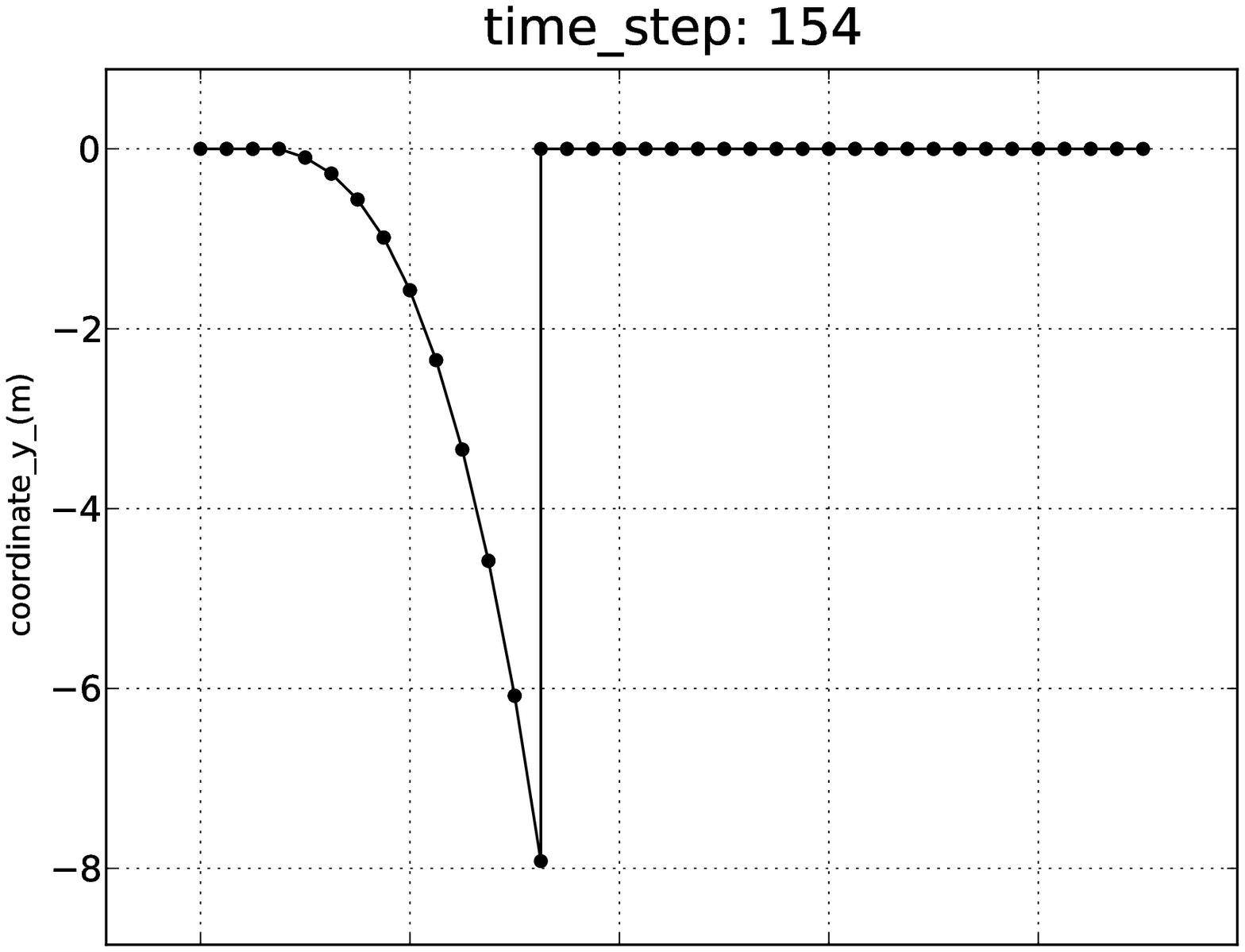}}&
\hspace*{-1.5em}\vspace*{-0em}{\includegraphics[width=.15\textwidth]{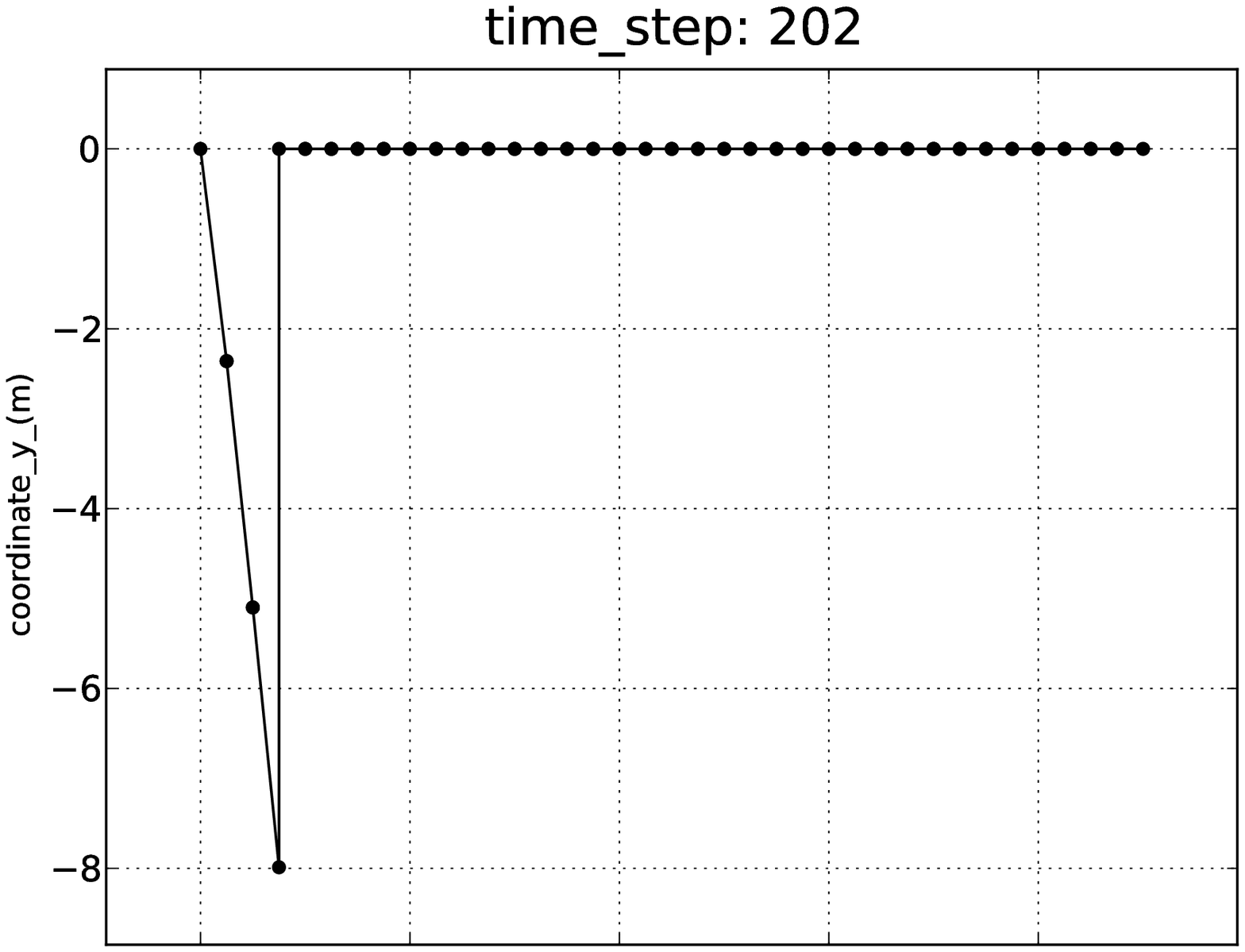}}&
\hspace*{-1.5em}\vspace*{-0em}{\includegraphics[width=.15\textwidth]{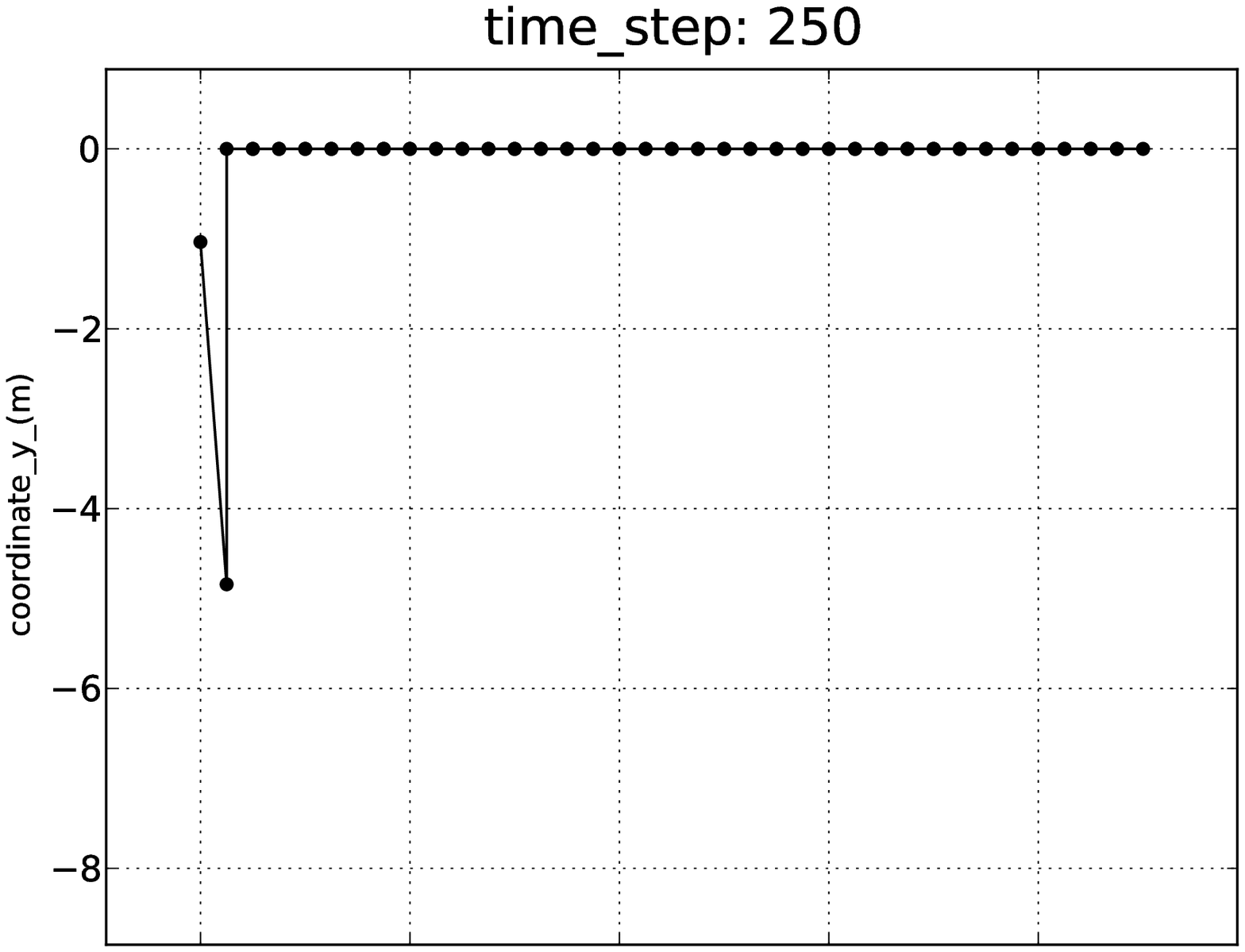}}&
\hspace*{-1.5em}\vspace*{-0em}{\includegraphics[width=.15\textwidth]{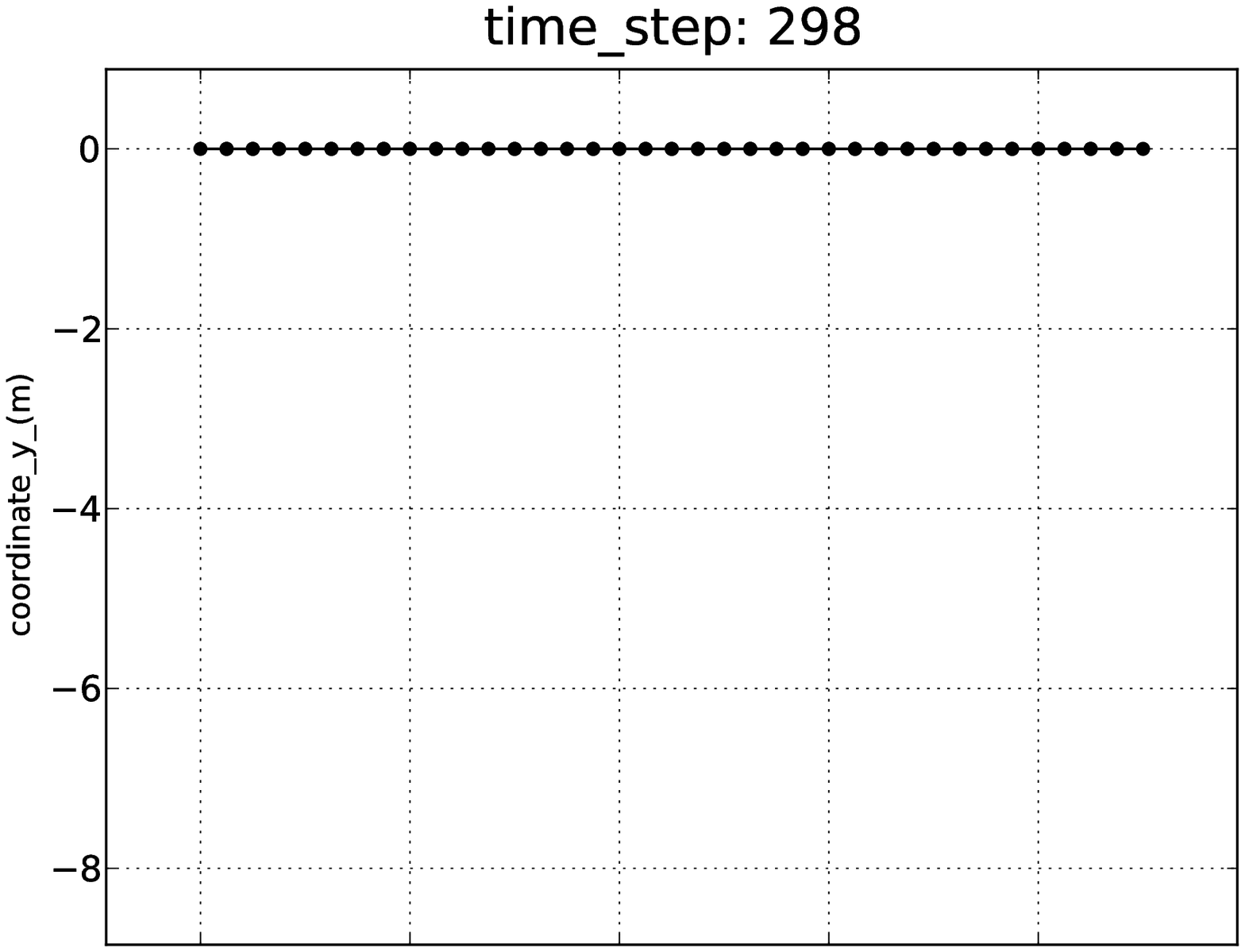}}
\\[-0em]
\hspace*{-2.em}\includegraphics[width=.15\textwidth]{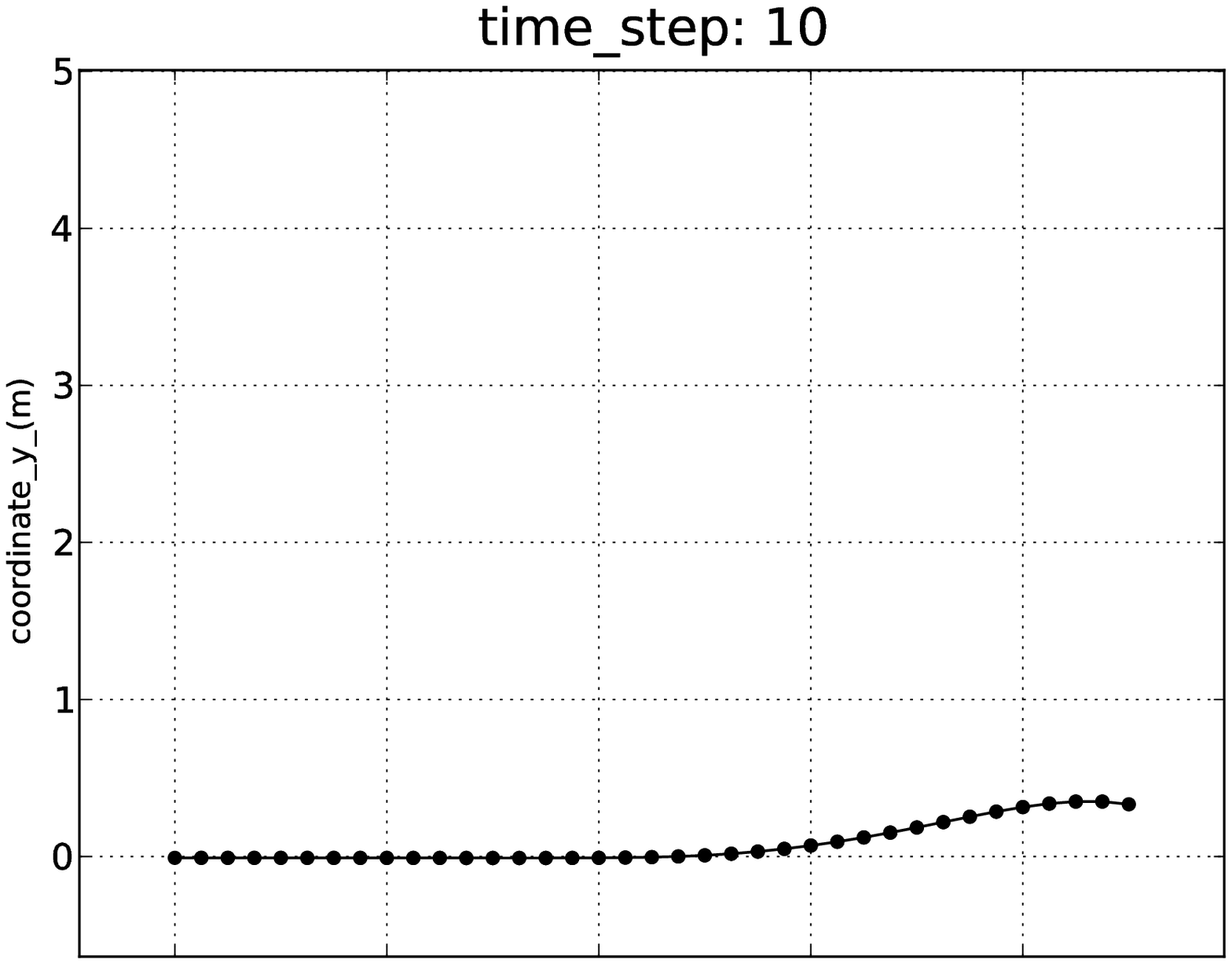}&
\hspace*{-1.5em}\vspace*{-0em}{\includegraphics[width=.15\textwidth]{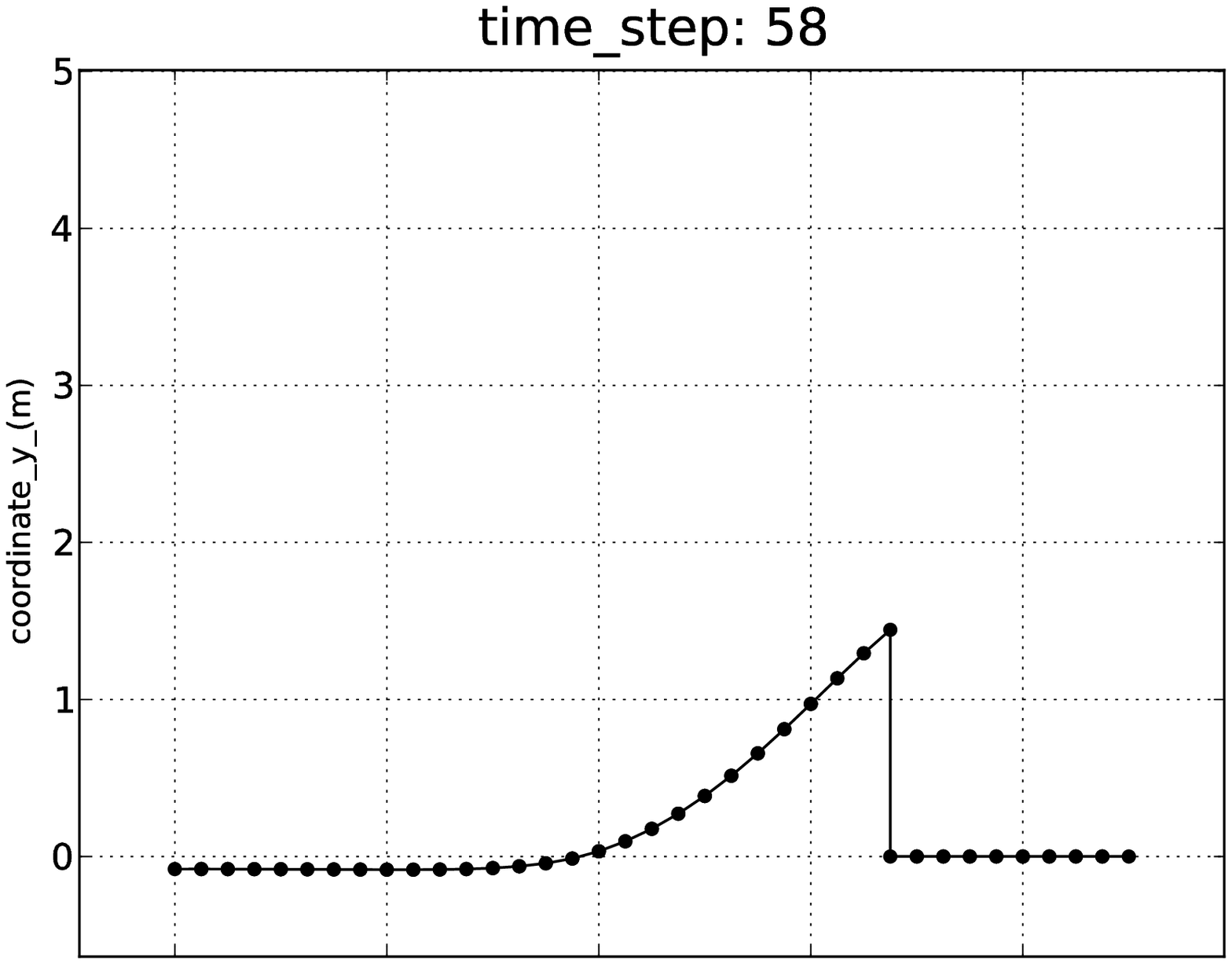}}&
\hspace*{-1.5em}\vspace*{-0em}{\includegraphics[width=.15\textwidth]{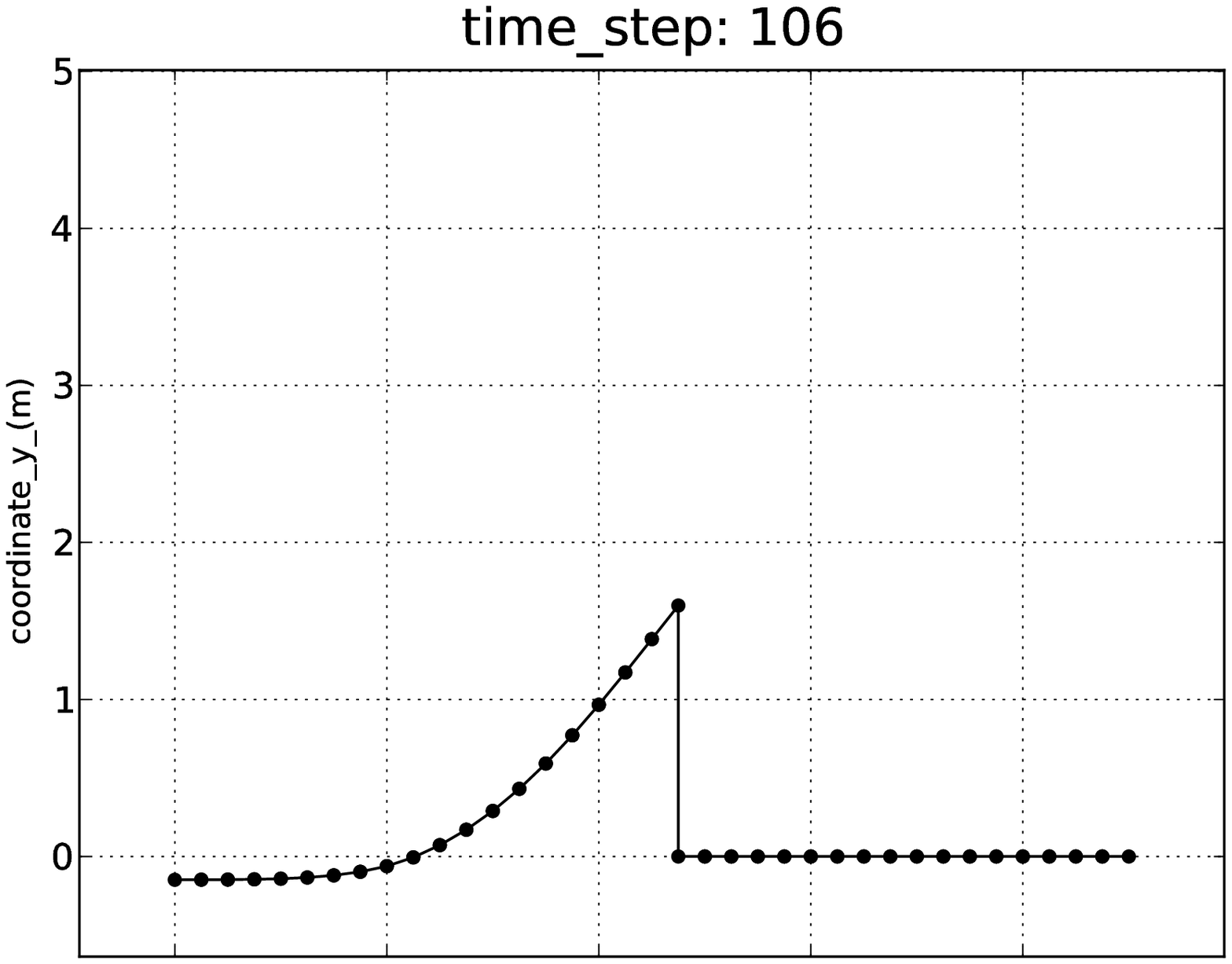}}&
\hspace*{-1.5em}\vspace*{-0em}{\includegraphics[width=.15\textwidth]{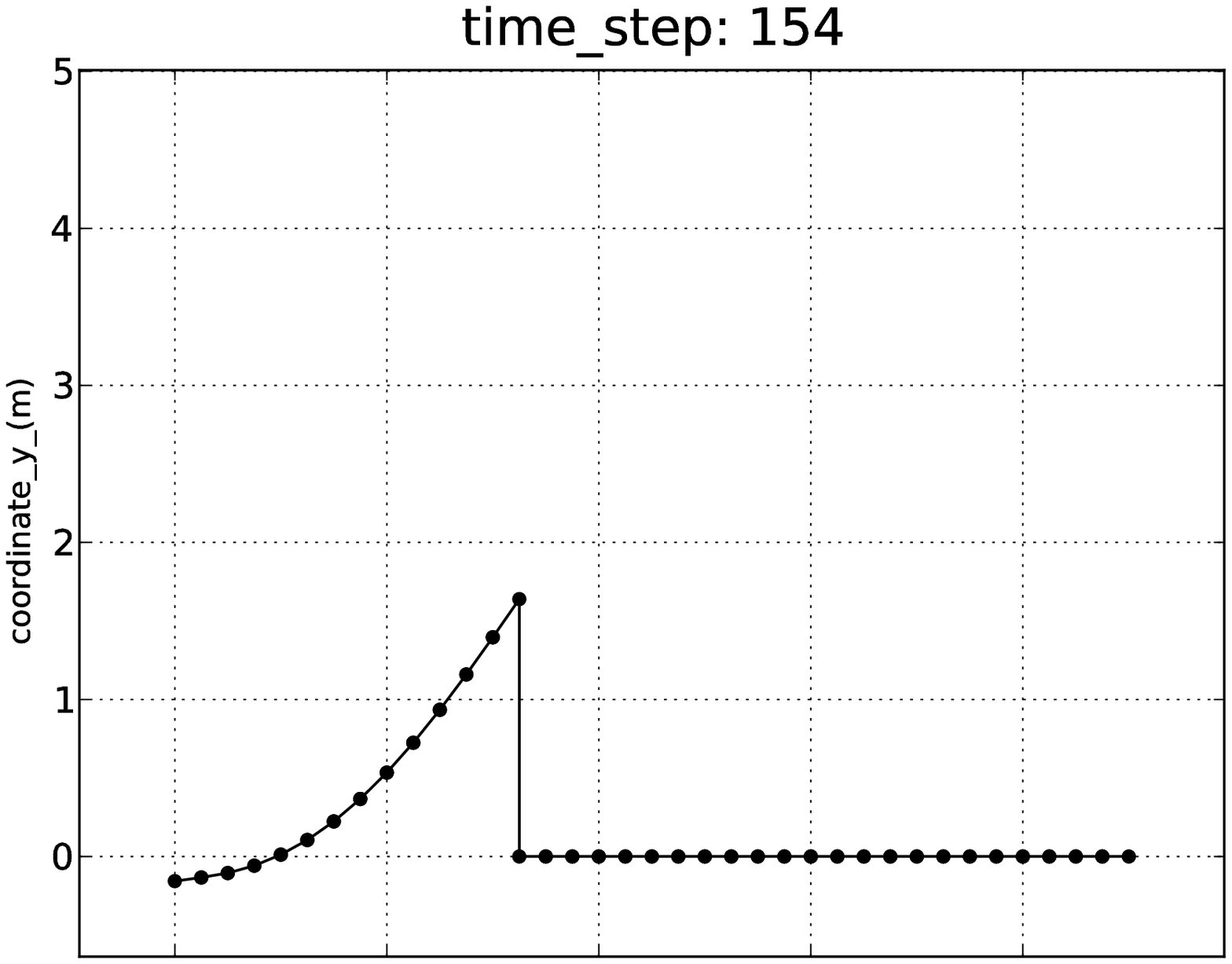}}&
\hspace*{-1.5em}\vspace*{-0em}{\includegraphics[width=.15\textwidth]{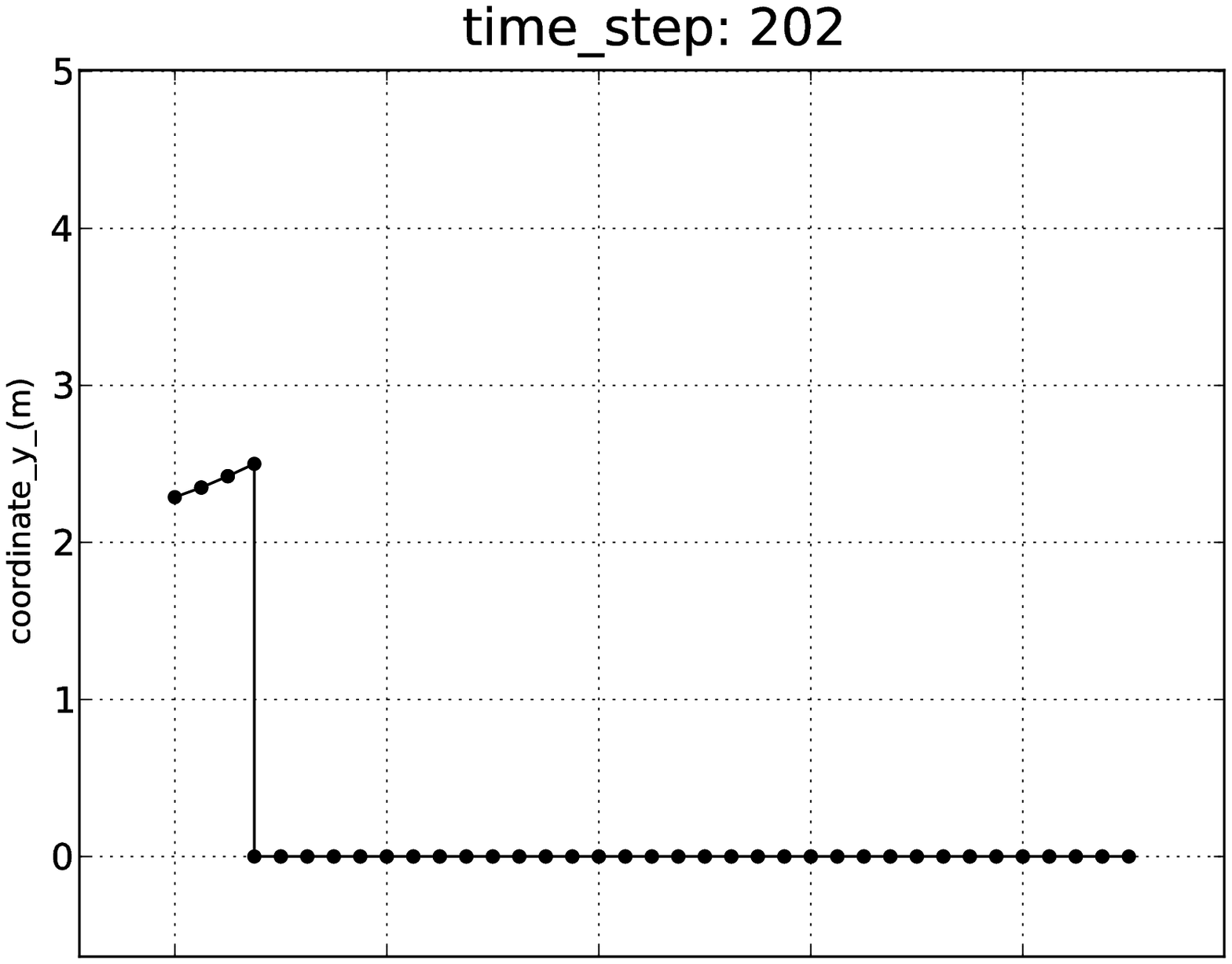}}&
\hspace*{-1.5em}\vspace*{-0em}{\includegraphics[width=.15\textwidth]{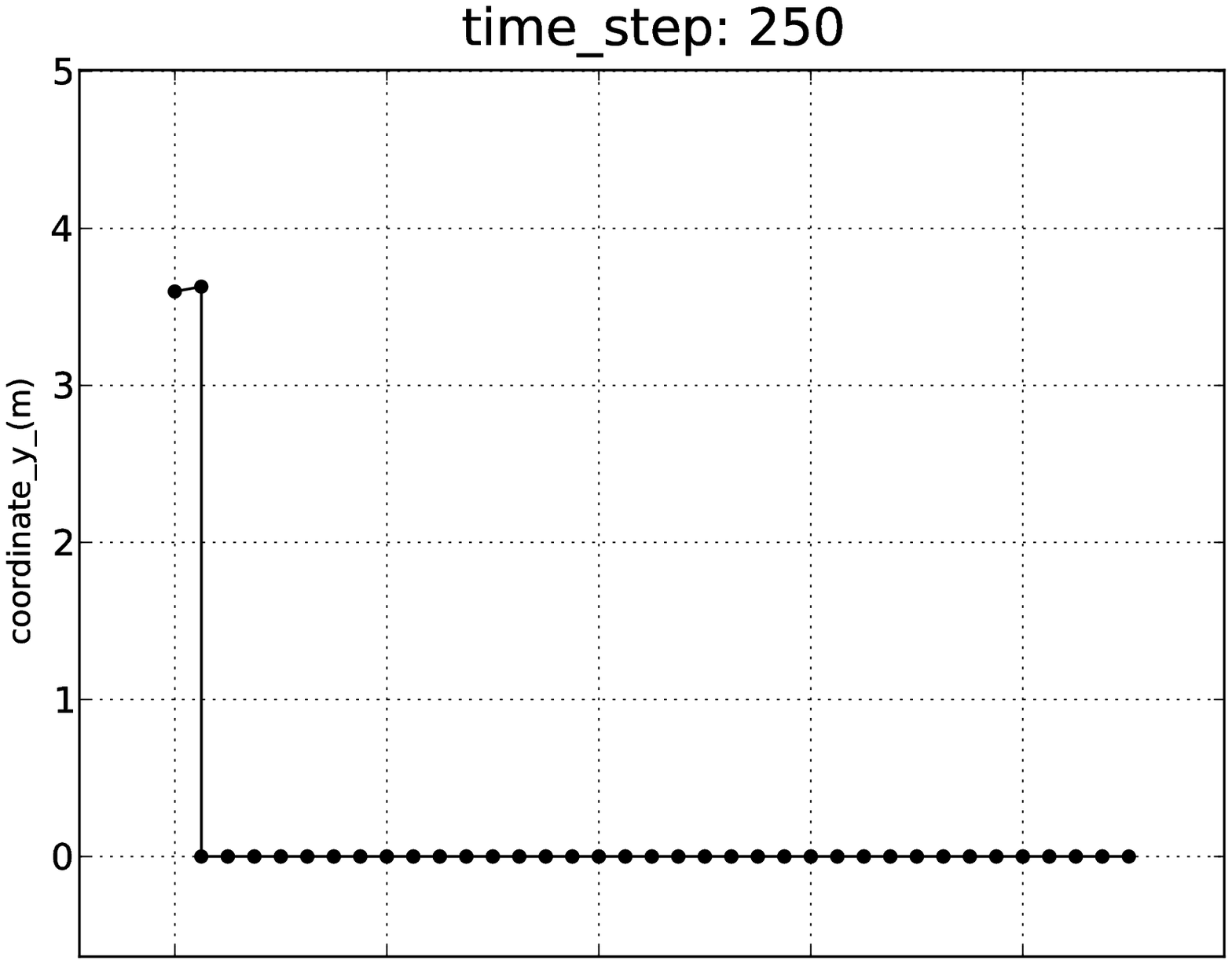}}&
\hspace*{-1.5em}\vspace*{-0em}{\includegraphics[width=.15\textwidth]{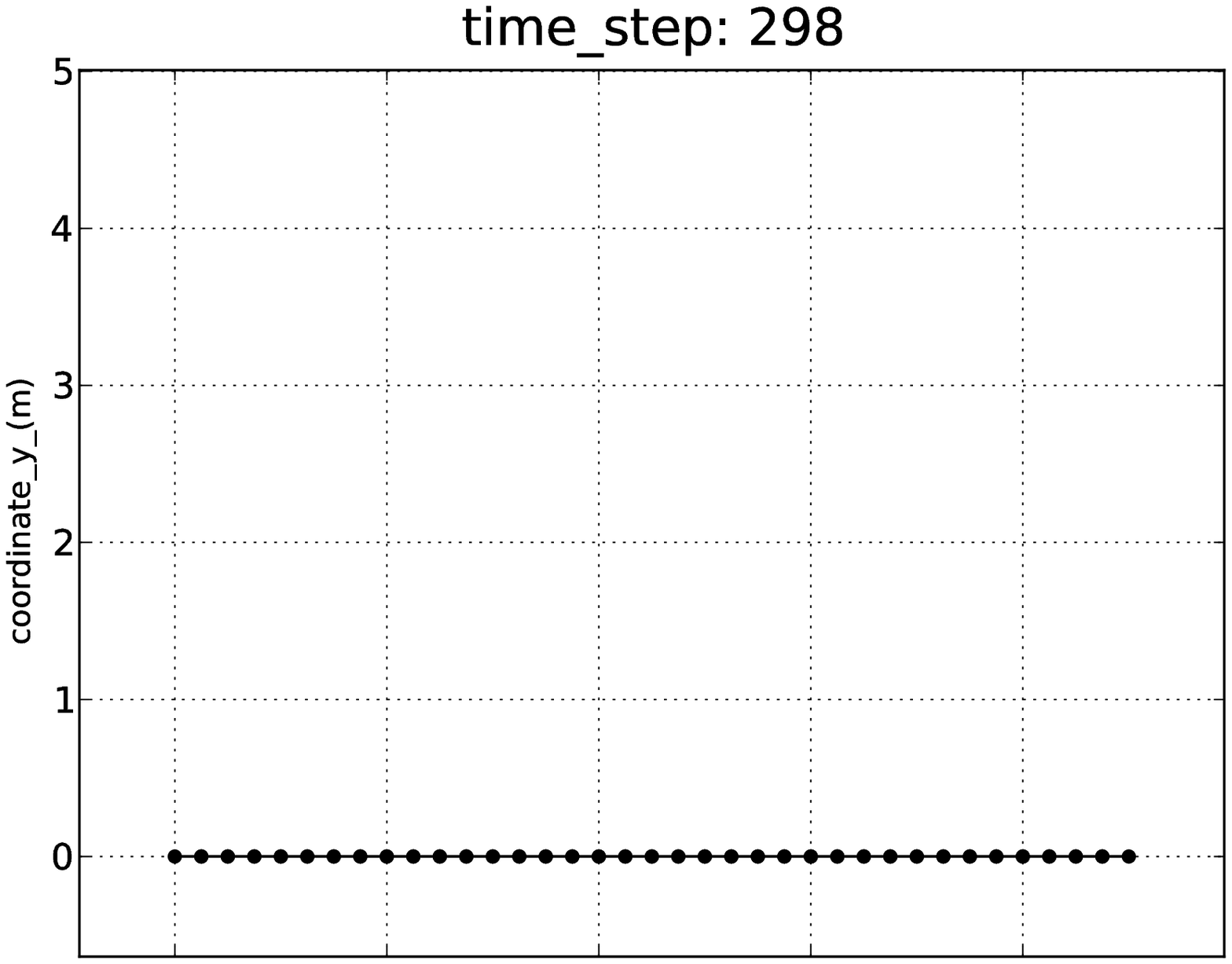}}
\\[-0em]
\hspace*{-1.5em}\vspace*{-0em}{\includegraphics[width=.13\textwidth]{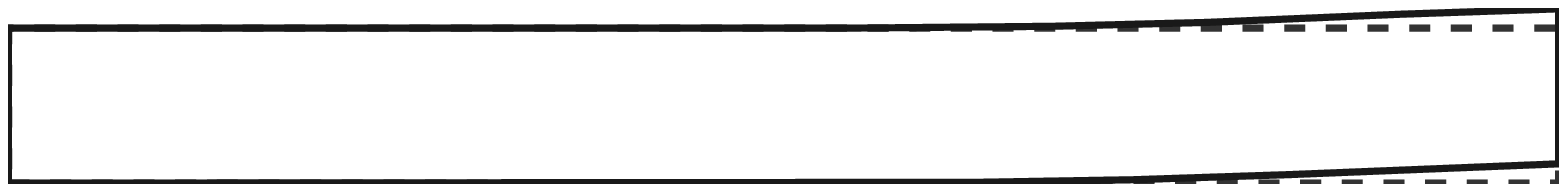}}&
\hspace*{-1.3em}\vspace*{-0em}{\includegraphics[width=.13\textwidth]{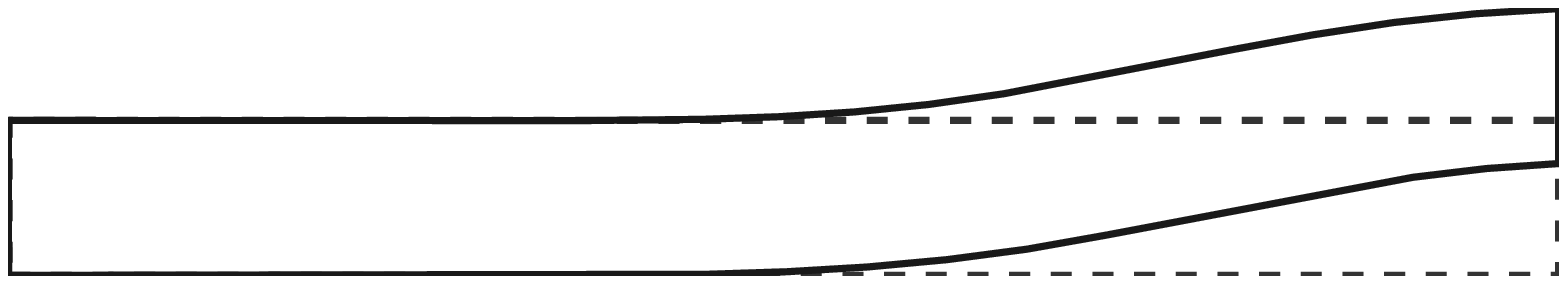}}&
\hspace*{-1.3em}\vspace*{-0em}{\includegraphics[width=.13\textwidth]{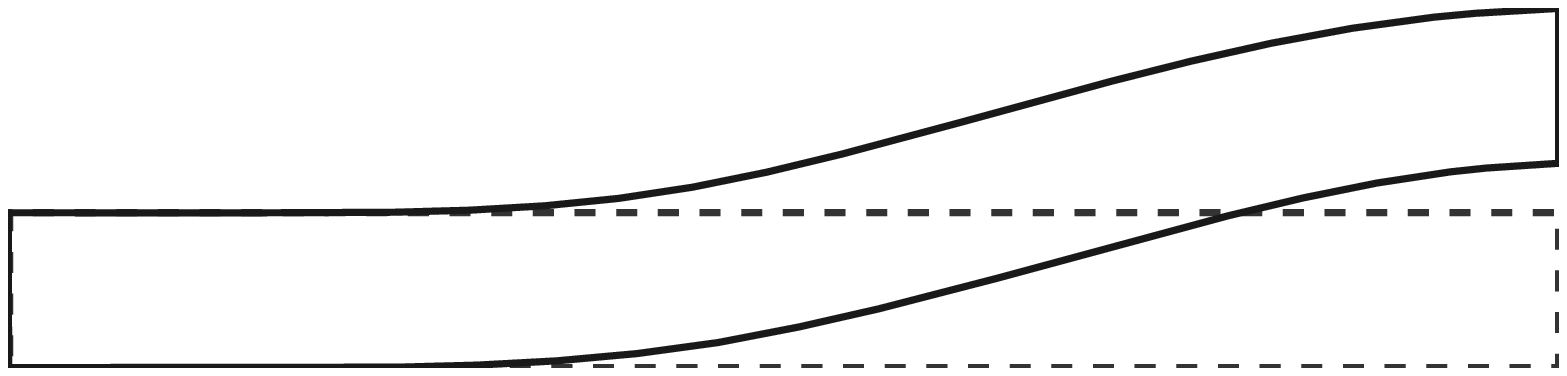}}&
\hspace*{-1.3em}\vspace*{-0em}{\includegraphics[width=.13\textwidth]{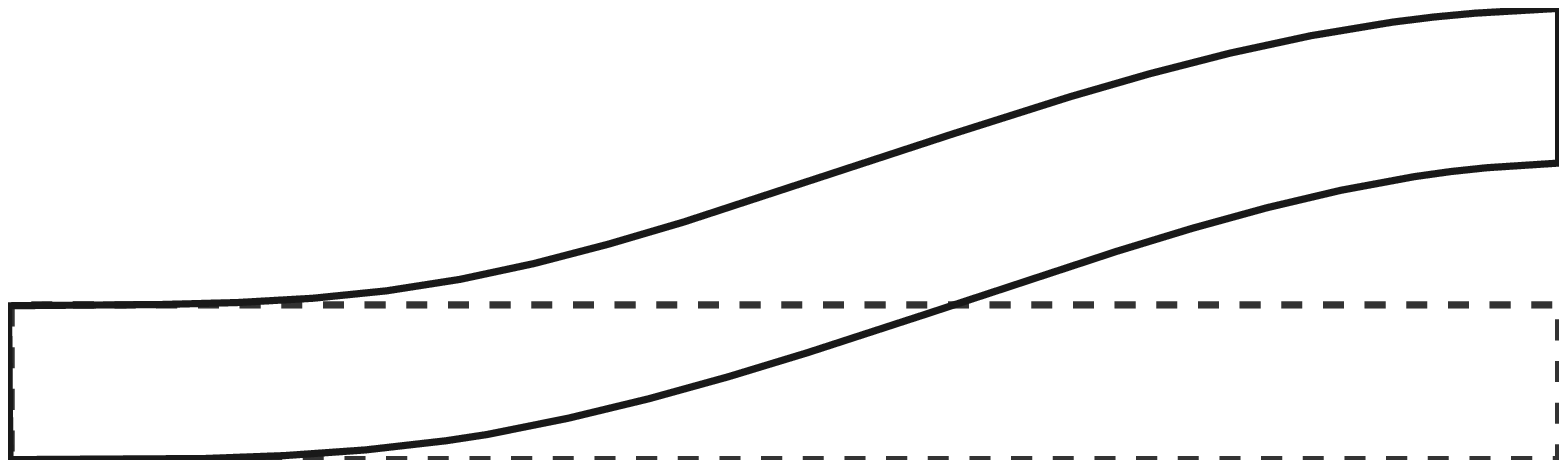}}&
\hspace*{-1.3em}\vspace*{-0em}{\includegraphics[width=.13\textwidth]{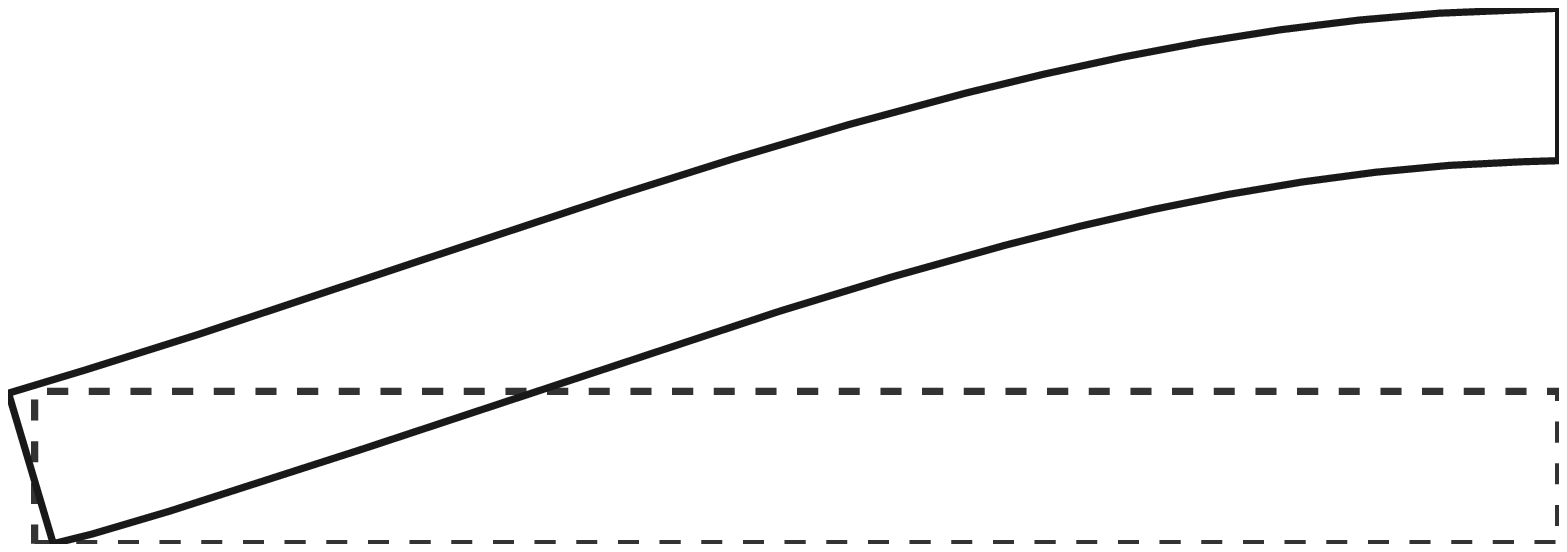}}&
\hspace*{-1.3em}\vspace*{-0em}{\includegraphics[width=.13\textwidth]{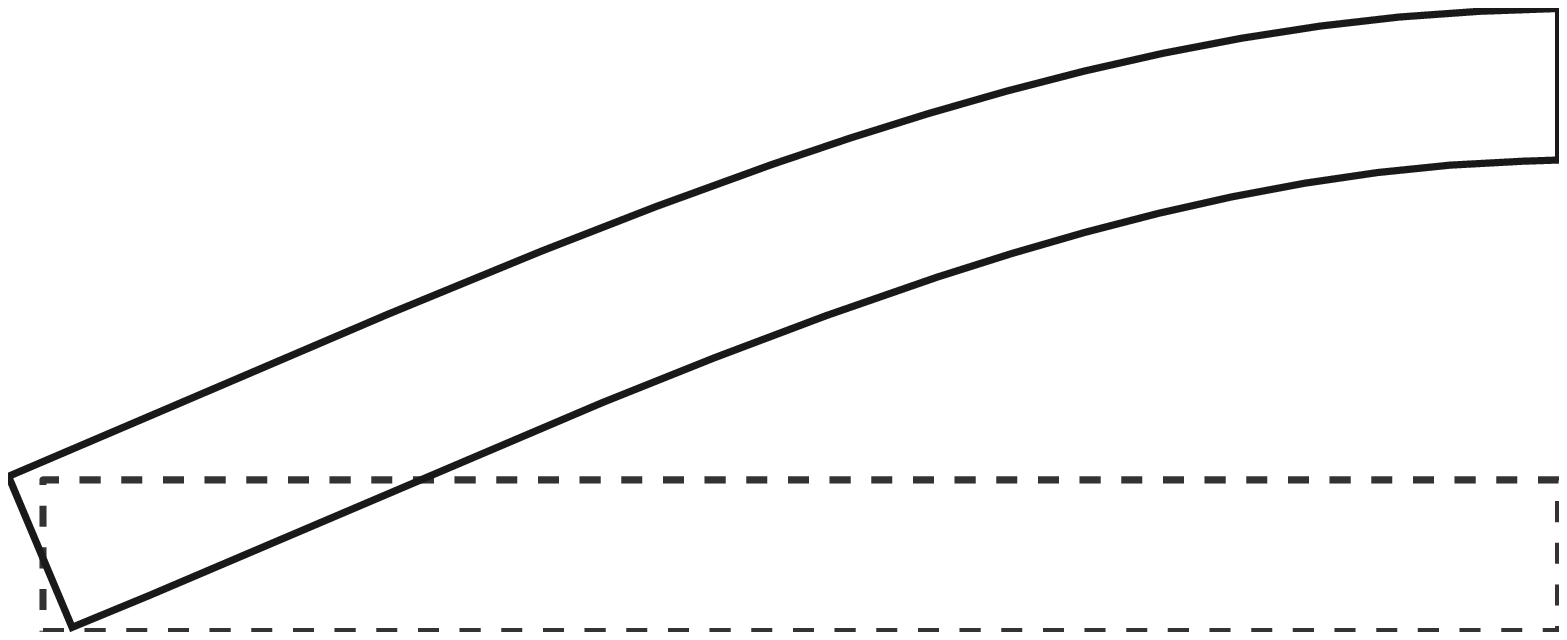}}&
\hspace*{-1.3em}\vspace*{-0em}{\includegraphics[width=.13\textwidth]{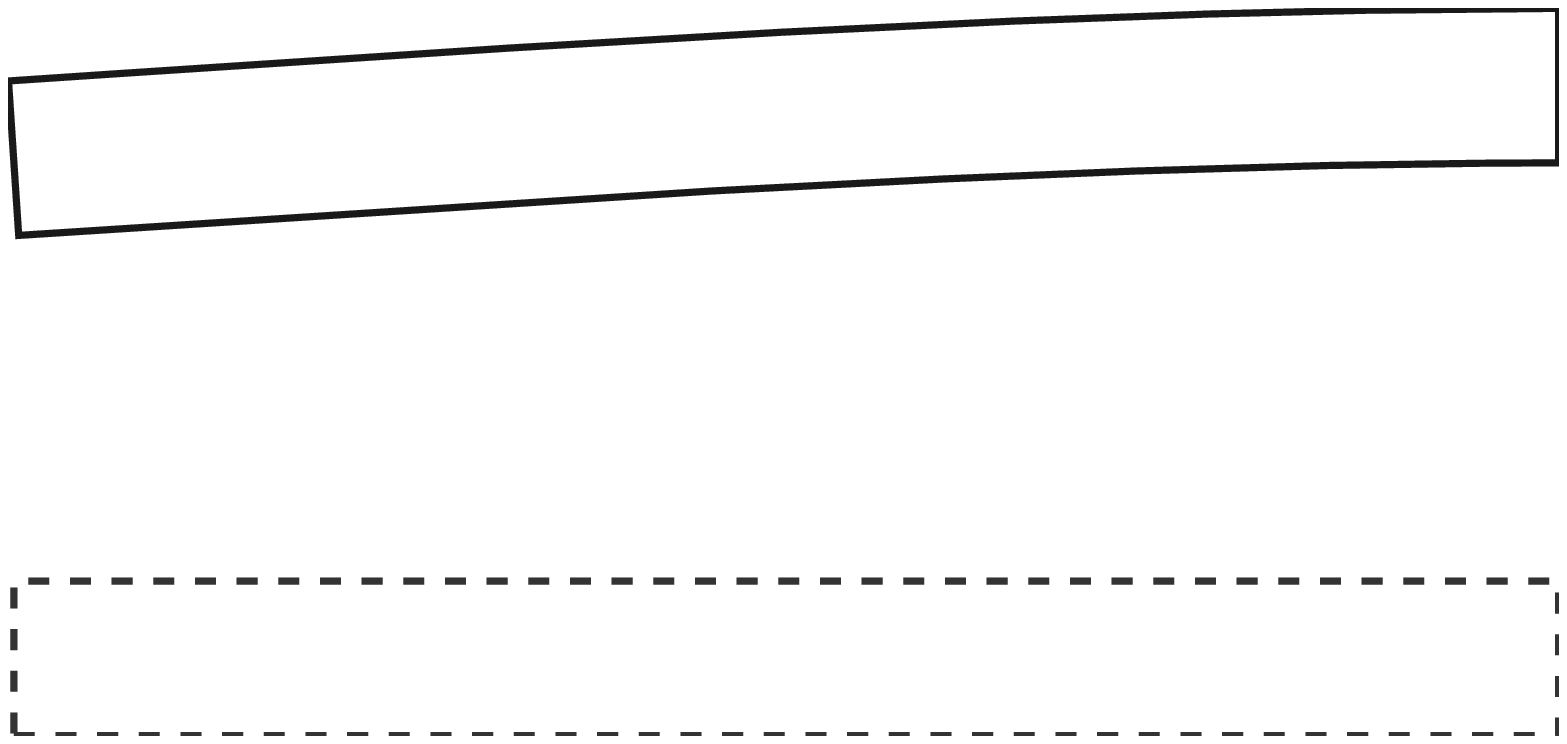}}
\end{tabular}
\end{my-picture}
\nopagebreak
\\[5em]
\begin{center}
{\small Fig.\,\ref{fig-movie-1}.\ }
\begin{minipage}[t]{.87\textwidth}\baselineskip=8pt
{\small 
Upper row: distribution of the normal traction force in the adhesive
along $\GC$.\\
Middle row: distribution of the tangential traction force in the adhesive
along $\GC$.\\
 Lower row:
deformed configuration of gradually delaminating specimen under loading
(2nd experiment) from Fig.\,\ref{fig_m1}; 
the displacement depicted 100$\,\times$  magnified. \\
}
\end{minipage}
\end{center}

\noindent
The analog of Fig.\,\ref{fig-2D-I-movie-2} is on Fig.\,\ref{fig-movie-2++}, 
showing again that the viscous energy (and also the defect measure) can be 
supported in the bulk far away from the delaminating surface $\GC$ and here 
even a tendency to surprising symmetry in spite of nonsymmetry of the boundary 
conditions: 

\begin{my-picture}{.6}{.55}{fig-movie-2++}
\psfrag{energy density rate, step: 6}{}
\psfrag{energy density rate, step: 9}{}
\psfrag{energy density rate, step: 12}{}
\psfrag{energy density rate, step: 15}{}
\psfrag{energy density rate, step: 18}{}
\psfrag{energy density rate, step: 21}{}
\begin{tabular}{lc} 
\hspace*{4em}\LARGE $^{^{^{^{\mbox{\footnotesize $t=\,$0.05}}}}}$  & 
\hspace*{1em}{\includegraphics[clip=true,width=.6\textwidth]{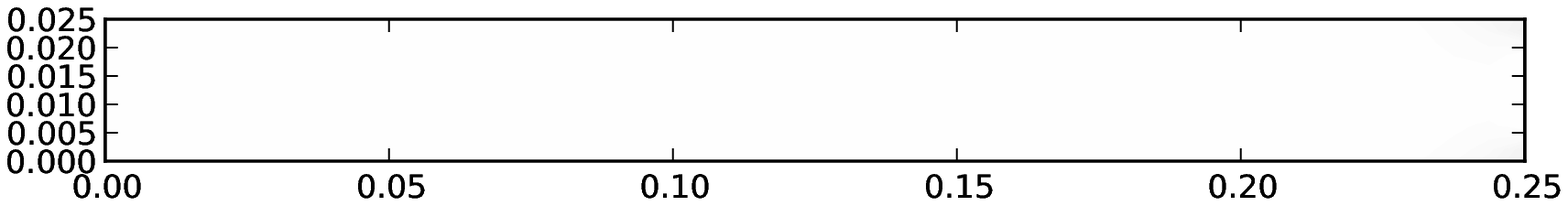}} 
\\[-1.25em]\hspace*{4em}\LARGE $^{^{^{^{\mbox{\footnotesize $t=\,$0.5}}}}}$ &
\hspace*{1em}{\includegraphics[clip=true,width=.6\textwidth]{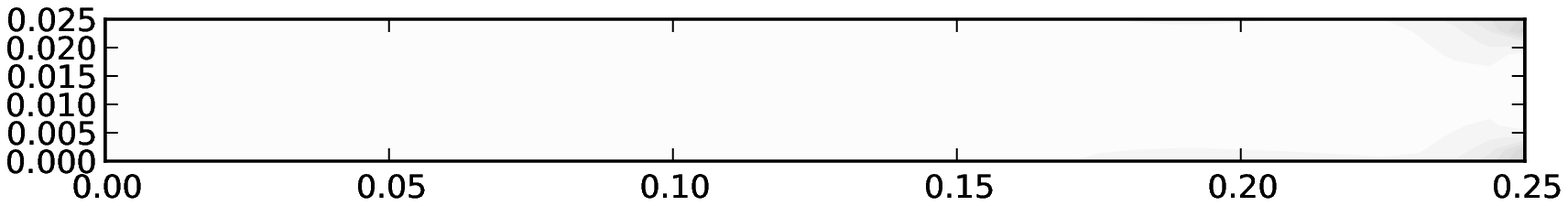}} 
\\[-1.25em]\hspace*{4em}\LARGE $^{^{^{^{\mbox{\footnotesize $t=\,$0.95}}}}}$ &
\hspace*{1em}{\includegraphics[clip=true,width=.6\textwidth]{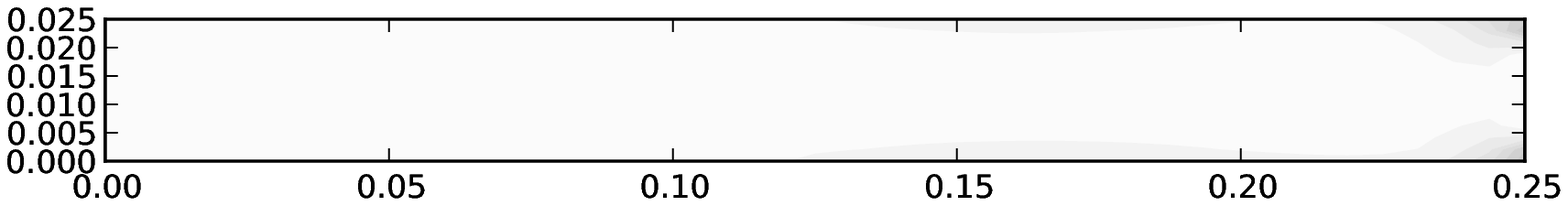}} 
\\[-1.25em]\hspace*{4em}\LARGE $^{^{^{^{\mbox{\footnotesize $t=\,$1.4}}}}}$ &
\hspace*{1em}{\includegraphics[clip=true,width=.6\textwidth]{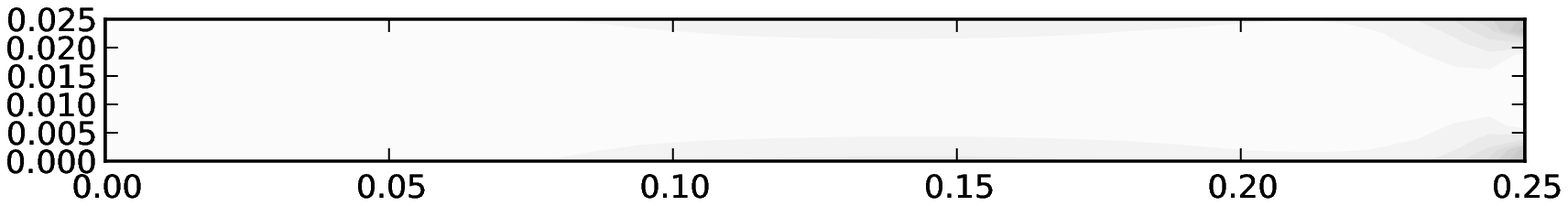}} 
\\[-1.25em]\hspace*{4em}\LARGE $^{^{^{^{\mbox{\footnotesize $t=\,$1.85}}}}}$ &
\hspace*{1em}{\includegraphics[clip=true,width=.6\textwidth]{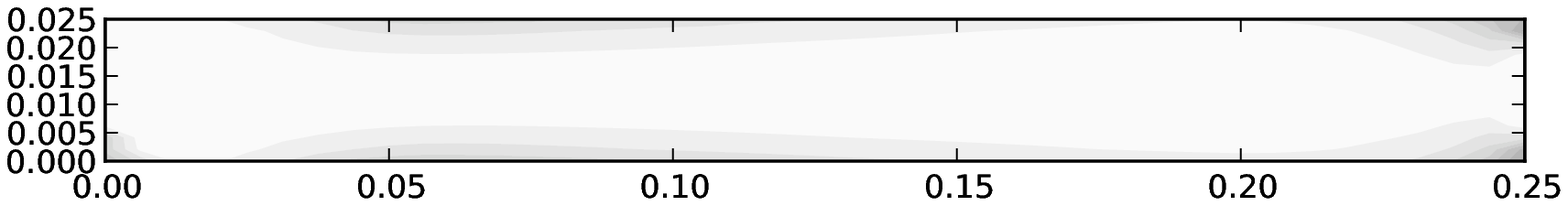}} 
\\[-1.25em]\hspace*{4em}\LARGE $^{^{^{^{\mbox{\footnotesize $t=\,$2.3}}}}}$ &
\hspace*{1em}{\includegraphics[clip=true,width=.6\textwidth]{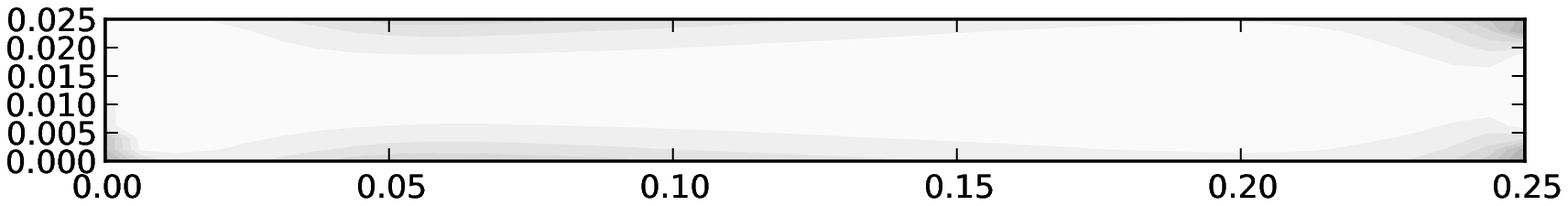}} 
\\[-1.25em]\hspace*{4em}\LARGE $^{^{^{^{\mbox{\footnotesize $t=\,$2.75}}}}}$&
\hspace*{1em}{\includegraphics[clip=true,width=.6\textwidth]{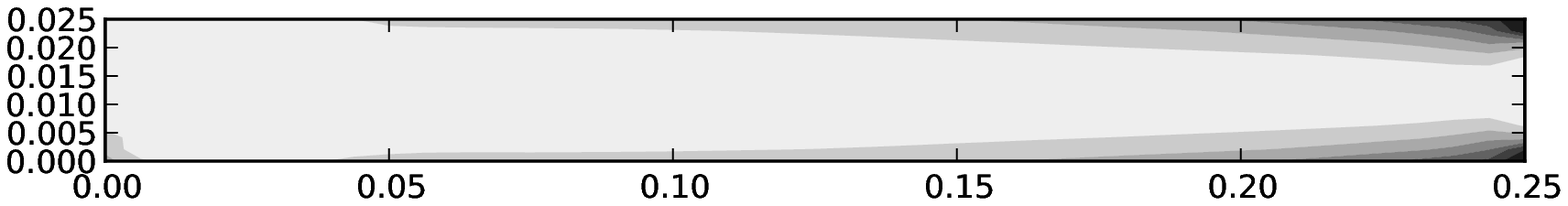}} 
\\[-.5em]&
\hspace*{0em}{\includegraphics[clip=true,width=.35\textwidth]{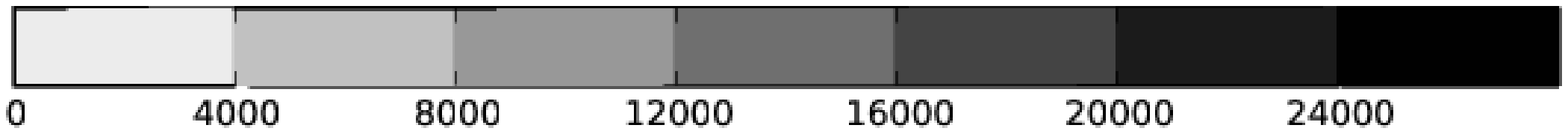}}
\\[-0em]
\end{tabular}
\end{my-picture}
\nopagebreak
\\[-1em]
\begin{center}
{\small Fig.\,\ref{fig-movie-2++}.\ }
\begin{minipage}[t]{.65\textwidth}\baselineskip=8pt
{\small The spatial distribution of the energy dissipated by viscosity over 
$[0,t]$, i.e.\ 
$\int_0^t\chi\bbC e(\DT u_{\chi,\tau}){:}e(\DT u_{\chi,\tau})\,\d t$ depicted 
at 6 selected time instances as on Fig.\,\ref{fig-movie-1}. Surprising 
tendency to a symmetry even under nonsymmetry loading can be observed.}
\end{minipage}
\end{center}

\noindent
Eventually, the force response corresponding to the previous 
Figures~\ref{fig-movie-1}--\ref{fig-movie-2++} is on 
Fig.\,\ref{fig-2D-engr-2}(left). Like on Fig.\,\ref{fig-2D-engr}, there is 
again a surprisingly good match if calculated by the simplified algorithm 
from Remark~\ref{rem-inviscid} as depicted  Fig.\,\ref{fig-2D-engr-2}(right),
although there is no theoretical guaranty of such force-response match
and obviously there is no match of energy.
\newpage
\begin{my-picture}{.9}{.25}{fig-2D-engr-2}
\psfrag{time}{\footnotesize\hspace*{4.5em}time\hspace{2em}$t$}
\psfrag{resultant_force}{\footnotesize\hspace{.5em}resultant force}
\psfrag{0.00005}{}
\hspace*{3em}\vspace*{-.1em}{\includegraphics[width=.4\textwidth,height=.25\textwidth]{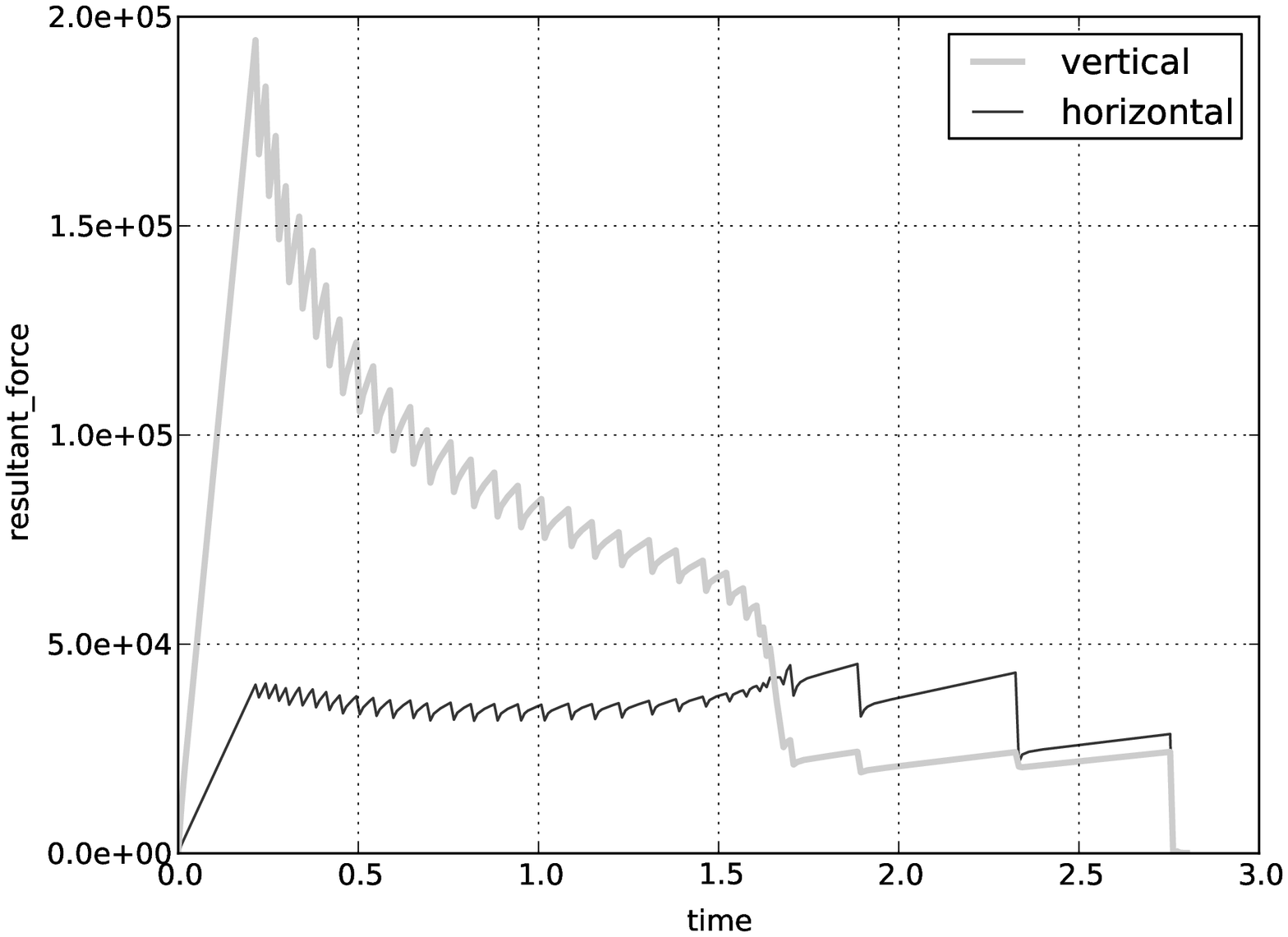}}
\hspace*{3em}\vspace*{-.1em}{\includegraphics[width=.4\textwidth,height=.25\textwidth]{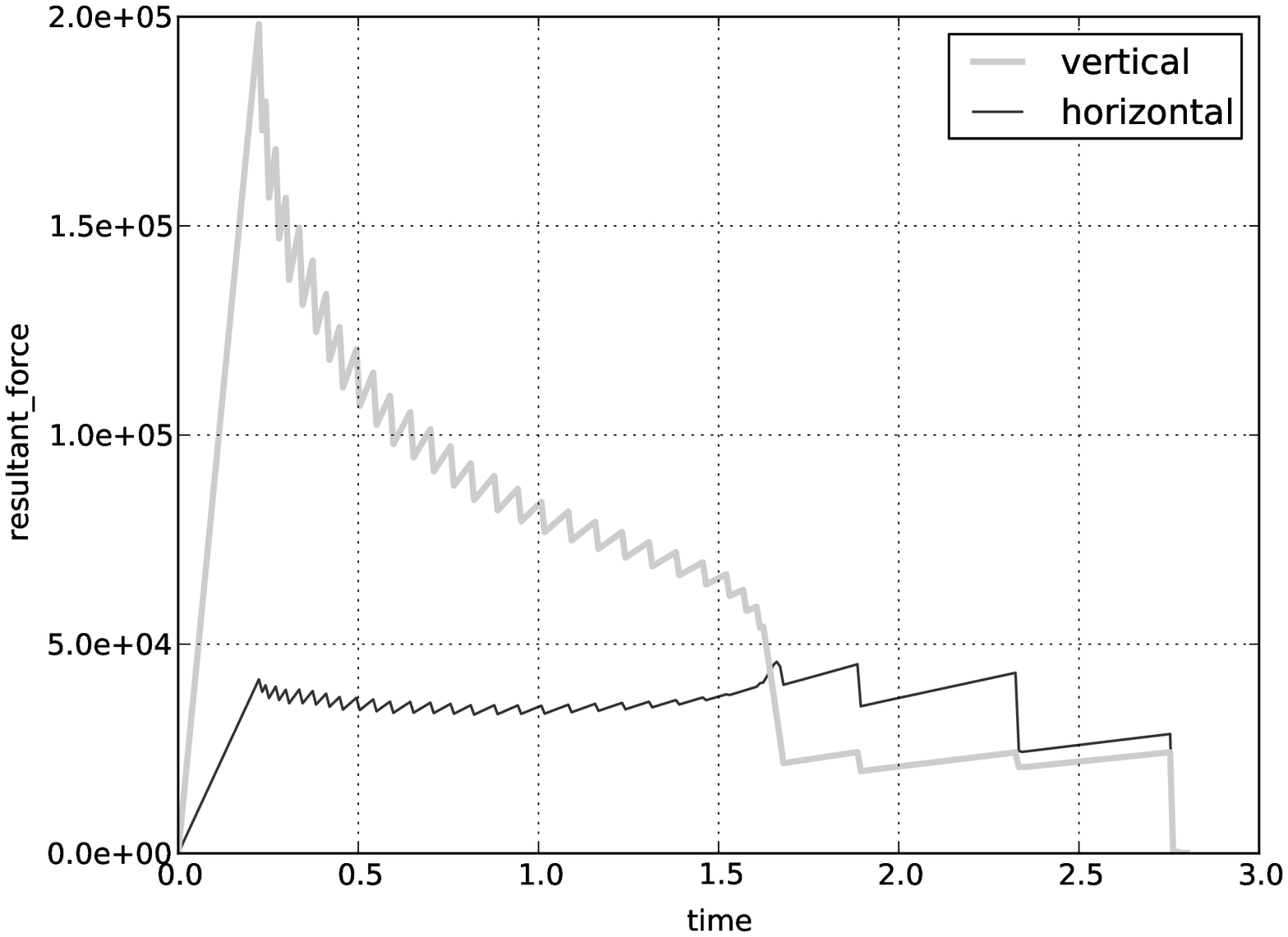}}
\end{my-picture}
\nopagebreak
\\[-2em]
\begin{center}
{\small Fig.\,\ref{fig-2D-engr-2}.\ }
\begin{minipage}[t]{.85\textwidth}\baselineskip=8pt
{\small
Vertical and horizontal components of the reaction force on the Dirichlet 
loading (left) and its comparison with the simplified inviscid algorithm from
Remark~\ref{rem-inviscid} (right), again showing a surprising match
as on Figures~\ref{fig:1-D-violation} and \ref{fig-2D-engr}.
}
\end{minipage}
\end{center}

\section{Conclusion}\label{sect-concl}

We devised and tested a numerical strategy to approximate 
the natural notion of solution to delamination of 
purely elastic material (as usually considered in engineering
applications). These solutions involve certain ``defect measures'' 
and follow the asymptotics 
arising from vanishing Kelvin-Voigt visco-elastic rheology 
which in the limit gives mere elastic material with the 
delamination driven naturally by stress rather than energy,
as devised purely theoretically in \cite{Roub??ACVE} without 
any time discretisation.

We showed a delicate interaction between vanishing viscosity
and time discretisation and difficulty to calculate physically
relevant solutions. After testing the algorithm on a 0-dimensional 
example where exact solution is known, we calculated a couple of 
nontrivial 2-dimensional examples by using BEM. 
Beside, we also compared the results with those obtained by a simplified 
inviscid algorithm ignoring defect measures and thus violating the 
energy balance, and showed a very good match of stress-strain responses
in all investigated particular cases, 
cf.\ Figs.\,\ref{fig:1-D-violation}, \ref{fig-2D-engr}, and 
\ref{fig-2D-engr-2}. Such an algorithm, called a Griffith
model, was advocated already in \cite{NegOrt08QSCP} although 
the desctruction of energy conservation was already pointed out there, 
and an investigation of some dynamical model leading to a correct limit
in the quasistatic evolution advised, cf.\ \cite[Sect.\,3.2]{NegOrt08QSCP}.
We conjecture that this simplified algorithm may converge 
to local solutions in the sense as introduced for a special crack 
problem in \cite{ToaZan09AVAQ} and further generally investigated in 
\cite{Miel11DEMF}, but the relation (and the phenomenon of good match) 
with the vanishing-viscosity approach remains still not justified.

The importance of the above presented methodology for calculation such defect 
measures would be pronounced 
in the full thermodynamical context like \cite{Eck02ESTV,RosRou11TARI},
cf.\ also \cite[Sect.\,5.4]{EcJaKr05UCP},
where the defect measure would naturally occur in the heat-transfer 
equation as a heat source and thus would influence temperature distribution
inside the body and then backward the
mechanics e.g.\ through thermal expansion or temperature dependence
of mechanical properties of the adhesive.
Interesting observation from Figures \ref{fig-2D-I-movie-2} and 
\ref{fig-movie-2++} is that the defect measure (and, the
possible heat production) may occur even in spots which are quite 
distant from the surface undergoing inelastic dissipative process of
delamination and it is certainly difficult (or rather impossible)
to guess its distribution by intuition.

\bigskip

\noindent
{\it Acknowledgments}: The authors thank Professor Alexander Mielke for 
discussion about the local-solution concept.

\bibliographystyle{abbrv}

\bibliography{trsevilla4-arXiv-preprint}

\begin{thebibliography}{10}

\bibitem{BarRou11TVER}
S.~Bartels and T.~Roub\'{\i}\v{c}ek.
\newblock Thermo-visco-elasticity with rate-independent plasticity in isotropic
  materials undergoing thermal expansion.
\newblock {\em Math. Model. Numer. Anal.}, 45:477--504, 2011.

\bibitem{Cagn08VVAF}
F.~Cagnetti.
\newblock A vanishing viscosity approach to fracture growth in a cohesive zone
  model with prescribed crack path.
\newblock {\em Math. Models Meth. Appl. Sci}, 18:1027--1071, 2009.

\bibitem{CFMT00RBFE}
M.~Charlotte, G.~Francfort, J.-J. Marigo, and L.~Truskinovsky.
\newblock Revisting brittle fracture as an energy minimization problem:
  comparison of {G}riffith and {B}arenblatt surface energy models.
\newblock In A.~Cachan, editor, {\em Continuous Damage and Fracture}, pages
  7--12, Paris, 2000. Elsevier.

\bibitem{DDMM08VVAQ}
G.~{Dal Maso}, A.~DeSimone, M.~G. Mora, and M.~Morini.
\newblock A vanishing viscosity approach to quasistatic evolution in plasticity
  with softening.
\newblock {\em Arch. Rational Mech. Anal.}, 189:469--544, 2008.

\bibitem{DipMaj87OCWS}
R.~J. DiPerna and A.~J. Majda.
\newblock Oscillations and concentrations in weak solutions of the
  incompressible fluid equations.
\newblock {\em Comm. Math. Phys.}, 108:667--689, 1987.

\bibitem{Eck02ESTV}
C.~Eck.
\newblock Existence of solutions to a thermo-viscoelastic contact problem with
  {C}oulomb friction.
\newblock {\em Math. Models Methods Appl. Sci.}, 12:1491--1511, 2002.

\bibitem{EcJaKr05UCP}
C.~Eck, J.~Jaru\v{s}ek, and M.~Krbec.
\newblock {\em Unilateral Contact Problems}.
\newblock Chapman \& Hall/CRC, Boca Raton, 2005.

\bibitem{EfeMie06RILS}
M.~Efendiev and A.~Mielke.
\newblock On the rate-independent limit of systems with dry friction and small
  viscosity.
\newblock {\em J. Convex Anal.}, 13:151--167, 2006.

\bibitem{Feir03DCF}
E.~Feireisl.
\newblock {\em Dynamics of {C}ompressible {F}low}.
\newblock Clarendon Press, Oxford, 2003.

\bibitem{Fias??VVAQ}
A.~Fiaschi.
\newblock A vanishing viscosity approach to a quasistatic evolution problem
  with nonconvex energy.
\newblock {\em Ann. Inst. H. Poincar{\'{e}}, Anal. Nonlin.}, 26:1055--1080,
  2009.

\bibitem{Frem85DAS}
M.~Fr\'emond.
\newblock Dissipation dans l'adh\'{e}rence des solides.
\newblock {\em C.R. Acad. Sci., Paris, S\'{e}r.{I}{I}}, 300:709--714, 1985.

\bibitem{Gera91MDM}
P.~G{\'e}rard.
\newblock Microlocal defect measures.
\newblock {\em Comm. Partial Differential Equations}, 16:1761--1794, 1991.

\bibitem{Gren95DMVP}
E.~Grenier.
\newblock Defect measures of the {V}lasov-{P}oisson system in the quasineutral
  regime.
\newblock {\em Comm. Partial Diff. Equations}, 20:1189--1215, 1995.

\bibitem{HsiWen08BIE}
G.~Hsiao and W.~Wendland.
\newblock {\em Boundary Integral Equations}.
\newblock Springer, Berlin, 2008.

\bibitem{KnMiZa08ILMC}
D.~Knees, A.~Mielke, and C.~Zanini.
\newblock On the inviscid limit of a model for crack propagation.
\newblock {\em Math. Models Meth. Appl. Sci.}, 18:1529--1569, 2008.

\bibitem{LazToa??MCPB}
G.~Lazzaroni and R.~Toader.
\newblock A model for crack propagation based on viscous approximation.
\newblock {\em Math. Models Meth. Appl. Sci.}, 21:2019--2047, 2011.

\bibitem{Legu02STCC}
D.~Leguillon.
\newblock Strength or toughness? {A} criterion for crack onset at a notch.
\newblock {\em European J. of Mechanics A/Solids}, 21:61–72, 2002.

\bibitem{Miel05ERIS}
A.~Mielke.
\newblock Evolution in rate-independent systems ({C}h.~6).
\newblock In C.~Dafermos and E.~Feireisl, editors, {\em Handbook of
  Differential Equations, Evolutionary Equations, vol.~2}, pages 461--559.
  Elsevier B.V., Amsterdam, 2005.

\bibitem{Miel11DEMF}
A.~Mielke.
\newblock Differential, energetic and metric formulations for rate-independent
  processes.
\newblock In L.~Ambrosio and G.~Savar\'e, editors, {\em Nonlinear PDE's and
  Applications}, pages 87--170. Springer, Berlin, 2011.

\bibitem{MiRoSa09MSJR}
A.~Mielke, R.~Rossi, and G.~Savar\'{e}.
\newblock Modeling solutions with jumps for rate-independent systems on metric
  spaces.
\newblock {\em Discr. Cont. Dynam. Systems Ser.~A}, 25:585--615, 2009.

\bibitem{MiRoSa09?BVSV}
A.~Mielke, R.~Rossi, and G.~Savar\'{e}.
\newblock {B}{V} solutions and viscosity approximations of rate-independent
  systems.
\newblock {\em ESAIM Control Optim. Calc. Var.}, 18(1):36--80, 2012.

\bibitem{MieThe04RIHM}
A.~Mielke and F.~Theil.
\newblock On rate-independent hysteresis models.
\newblock {\em Nonl. Diff. Eqns. Appl.}, 11:151--189, 2004.

\bibitem{MiThLe02VFRI}
A.~Mielke, F.~Theil, and V.~I. Levitas.
\newblock A variational formulation of rate--independent phase transformations
  using an extremum principle.
\newblock {\em Arch. Rat. Mech. Anal.}, 162:137--177, 2002.

\bibitem{Naum06ETWS}
J.~Naumann.
\newblock An existence theorem for weak solutions to the equations of
  non-stationary motion of heat-conducting incompressible viscous fluids.
\newblock {\em Math. Meth. Appl. Sci.}, 29:1883--1906, 2006.

\bibitem{NegOrt08QSCP}
M.~Negri and C.~Ortner.
\newblock Quasi-static crack propagation by {G}riffith's criterion.
\newblock {\em Math. Models Methods Appl. Sci.}, 18(11):1895--1925, 2008.

\bibitem{RosRou11TARI}
R.~Rossi and T.~Roub{\'\i}{\v{c}}ek.
\newblock Thermodynamics and analysis of rate-independent adhesive contact at
  small strains.
\newblock {\em Nonlinear Anal.}, 74:3159--3190, 2011.

\bibitem{Roub09RIPV}
T.~Roub{\'\i}{\v{c}}ek.
\newblock Rate independent processes in viscous solids at small strains.
\newblock {\em Math. Methods Appl. Sci.}, 32:825--862, 2009.

\bibitem{Roub12NPDE}
T.~Roub{\'\i}{\v{c}}ek.
\newblock {\em Nonlinear Partial Differential Equations with Applications}.
\newblock 2nd Ed., Birk\-h\"auser, Basel, 2013.

\bibitem{Roub??ACVE}
T.~Roub{\'\i}{\v{c}}ek.
\newblock Adhesive contact of visco-elastic bodies and defect measures arising
  by vanishing viscosity.
\newblock {\em SIAM J. Math. Anal.}, in print, DOI. 10.1137/12088286X.

\bibitem{Stef09VCRI}
U.~Stefanelli.
\newblock A variational characterization of rate-independent evolution.
\newblock {\em Mathem. Nach.}, 282:1492--1512, 2009.

\bibitem{ToaZan09AVAQ}
R.~Toader and C.~Zanini.
\newblock An artificial viscosity approach to quasistatic crack growth.
\newblock {\em Boll. Unione Matem. Ital.}, 2:1--36, 2009.

\end{thebibliography}

\end{document}